\newtheorem{conjecture}{Conjecture}[section]
  \newtheorem{theorem}{Theorem}[section]
  \newtheorem{lemma}{Lemma}[section]
  \newtheorem{proposition}{Proposition}[section]
  \newtheorem{definition}{Definition}[section]
  \newtheorem{remark}{Remark}[section]
\numberwithin{equation}{section}
   \newcommand{\beqn}{\begin{eqnarray}}
   \newcommand{\eeqn}{\end{eqnarray}}
   \newcommand{\beqs}{\begin{eqnarray*}}
   \newcommand{\eeqs}{\end{eqnarray*}}
   \newcommand{\ban}{\begin{eqnarray*}}
   \newcommand{\nan}{\end{eqnarray*}}
   \newcommand{\beq}{\begin{equation}}
   \newcommand{\eeq}{\end{equation}}
\renewcommand{\det}{\mbox{det}}
\begin{document}
\allowdisplaybreaks
\arraycolsep=1pt

\title{Modified Futaki invariant and equivariant Riemann-Roch formula}
\author{Feng Wang\ \ \ \ \ \ Bin  $\text{Zhou}^*$\ \ \ \ \ \ Xiaohua  $\text{Zhu}^{**}$}

\subjclass[2000]{Primary: 53C25; Secondary:  53C55,
 58J05, 19L10}
\keywords {modified Futaki-invariant, Riemann-Roch formula,  K\"ahler-Ricci solitons,  toric manifolds}

\address{Feng Wang, Department of Mathematics, Zhejiang University, Hangzhou 310027, China}

\address{Bin Zhou, School of Mathematical Sciences, Peking
University, Beijing 100871, China; and
Mathematical Sciences Institute, The Australian National University, Canberra, ACT 2601, Australia.}

\address{Xiaohua Zhu, School of Mathematical Sciences, Peking
University, Beijing 100871, China.}

\email{wf19870517@163.com\ \ \ bzhou@pku.edu.cn\ \ \ xhzhu@math.pku.edu.cn}

\thanks {*Partially supported by NSFC 11101004 and ARC DECRA}

\thanks {** Partially supported by the NSFC Grants 11271022 and 11331001.}

\begin{abstract}
In this paper, we  give a new version of the modified Futaki invariant for a test configuration associated to  the soliton action on a Fano manifold.  Our version will naturally come from toric test configurations defined by Donaldson for toric manifolds.  As an application, we show that the modified $K$-energy is proper for  toric invariant K\"ahler potentials on a   toric Fano manifold. 
\end{abstract}

 \maketitle


\section{Introduction}

\vskip 10pt

Let $(M,g)$ be a Fano manifold with a K\"ahler form  $\omega_g\in 2\pi c_1(M)$ of $g$.  Denote   $\eta(M)$  to be the linear space of  holomorphic vector fields on $M$. Then by Hodge Theorem,  for  any   $X\in \eta(M)$,   there exists  a unique smooth  complex-valued  function
$\theta_X(g)$ of $M$ such that
\beqs
\begin{cases}
i_X\omega_g=\sqrt{-1}\bar\partial\theta_X(g), &\\[5pt]
\int_Me^{\theta_X(g)}\omega_g^n=\int_M\omega_g^n.&
\end{cases}
\eeqs

In \cite {TZ2}, Tian and Zhu introduced the modified Futaki invariant on $\eta(M)\times \eta(M)$,
\beq\label{tian-zhu-inv}
 F_{X}(v)=\int_M v(h_g-\theta_X(g))e^{\theta_X(g)}\frac{\omega_g^n}{n!}, \ \ X, v\in \eta(M),
\eeq
where $h_g$ is  the Ricci potential of $g$  such that
$${\rm Ric}(\omega_g)-\omega_g=\sqrt{-1}\partial\bar\partial h_g.$$
It was shown  there that $F_X(v)$ is a holomorphic invariant independent of the choice of $g$ with $\omega_g\in 2\pi c_1(M)$,  and so  it  defines an obstruction to the existence of K\"ahler-Ricci solitons  with respect to an element $X\in \eta_r(M)$, where $\eta_r(M)$ is the reductive part of $\eta(M)$.   In particular, when $X=0$,   $F_{X}(v)$ is classical Futaki invariant \cite{Fut}. It was also proved by Tian and Zhu that there exists a unique $X$ such that $F_{X}(v)=0$, $\ \forall \ v\in \eta_r(M)$.
For convenience, we call such  $X$ the {\it soliton vector field} on $M$.

Recently, by using Ding-Tian's  idea of generalizing Futaki invariant \cite {DT}, Xiong and Berman, gave a generalization of the modified Futaki invariant $F_{X}(\cdot)$ for any special degeneration associated to  $X$, independently \cite{Xi, Be2}.   As a consequence, they both proved that  $F_{X}(\cdot)$ is nonnegative if $M$ admits a K\"ahler-Ricci soliton. Berman also gaves an algebraic formula for  $F_{X}(\cdot)$, which depends on weights of the automorphisms group on holomorphic sections of multi-line bundles on the center fibre induced by the test configuration.   The purpose of present paper is  to  define  the modified Futaki invariant   $F_{X}(\cdot)$   for   any  test configuration associated to  $X$.
Our motivation is inspired by Berman's  algebraic formula for   special degenerations and is to  modify his formula for  general  test configurations. Then by applying  the equivariant Riemann-Roch formula with a $2$-dimensional torus action we show that our definition coincides with  Xiong-Berman's for  special degenerations.  Our definition also includes Tian and  Donaldson's   generalized  Futaki invariant as a special case \cite {T1, D1} when $X=0$.

 As examples, we compute  the new version of the modified Futaki invariant  for any toric degeneration on  toric manifolds  introduced  by Donaldson \cite{D1}. Then  by using the method in \cite{ZZ1}, we are able to prove

\begin{theorem}\label{WZZ}
Any toric Fano manifold  is modified $K$-stable  for any toric  degeneration. Furthermore, the  modified $K$-energy is proper   for  toric invariant K\"ahler potentials.

\end{theorem}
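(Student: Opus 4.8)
The plan is to pass from K\"ahler geometry to convex analysis on the moment polytope and then run the argument of \cite{ZZ1} in the soliton-weighted setting. Let $P\subset\RR^n$ be the moment polytope of the toric Fano manifold $M$ with respect to $-K_M$; since $M$ is Fano, $P$ is reflexive, so $0$ is its unique interior lattice point and every facet $F$ lies on a hyperplane $\lan a_F,x\ran=1$ with $a_F$ the primitive outward normal. Toric invariant K\"ahler potentials correspond to convex symplectic potentials $u$ on $P$ satisfying Guillemin's boundary conditions, the soliton potential becomes the affine function $\theta_X=\lan X,x\ran+\text{const}$ in these coordinates, and the defining property of the soliton vector field $X$ translates into the weighted-barycenter condition $\int_P x\,e^{\lan X,x\ran}\,dx=0$. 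Under this dictionary the modified $K$-energy takes the form
\beq
\mathcal M_X(u)=-\int_P \log\det(D^2u)\,e^{\lan X,x\ran}\,dx+\mathcal L_X(u)+C,
\eeq
where $\mathcal L_X(u)=\int_{\partial P} u\, e^{\lan X,x\ran}\,d\sigma-\int_P u\,(n+\lan X,x\ran)\,e^{\lan X,x\ran}\,dx$, and the modified Futaki invariant of the toric degeneration defined by a rational piecewise-linear convex $f$ (in the sense of Donaldson \cite{D1}) is exactly $\mathcal L_X(f)$, as computed in the body of the paper.

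First I would establish modified $K$-stability: $\mathcal L_X(f)\ge 0$ for every convex $f$, with equality iff $f$ is affine. Applying the divergence theorem to the vector field $f\,e^{\lan X,x\ran}x$ and using $\lan a_F,x\ran=1$ on each facet (reflexivity) gives
\beq
\int_{\partial P} f\,e^{\lan X,x\ran}\,d\sigma=\int_P e^{\lan X,x\ran}\bigl(nf+\lan x,\nabla f\ran+f\lan X,x\ran\bigr)\,dx,
\eeq
so that $\mathcal L_X(f)=\int_P \lan x,\nabla f\ran\,e^{\lan X,x\ran}\,dx$. Since $0$ is interior to $P$, monotonicity of the gradient of the convex function $f$ yields the pointwise bound $\lan x,\nabla f(x)\ran\ge\lan x,\nabla f(0)\ran$; integrating against $e^{\lan X,x\ran}\,dx$ and invoking the weighted-barycenter normalization $\int_P x\,e^{\lan X,x\ran}\,dx=0$ gives $\mathcal L_X(f)\ge\lan \nabla f(0),\int_P x\,e^{\lan X,x\ran}\,dx\ran=0$, with equality forcing $\nabla f\equiv\nabla f(0)$, i.e.\ $f$ affine. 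This is modified $K$-stability for all toric degenerations.

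Second, for the properness statement I would follow \cite{ZZ1}. Normalizing $u$ to be convex with $\min_P u=0$ and with its affine part removed, the weighted entropy term is bounded below on the normalized class by the estimates of \cite{ZZ1}, so that properness of $\mathcal M_X$ reduces to a uniform coercivity estimate for the linear part, namely $\mathcal L_X(u)\ge \delta\,\|u\|-C$ with $\|u\|=\int_{\partial P}u\,e^{\lan X,x\ran}\,d\sigma$. Since $\mathcal L_X$ and $\|\cdot\|$ are both homogeneous of degree one and $\mathcal L_X$ is strictly positive off the affine functions by the previous step, this bound would follow from $\inf\{\mathcal L_X(u):\|u\|=1,\ u\text{ normalized}\}=\delta>0$.

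The main obstacle is precisely this last infimum. The normalized convex functions with $\|u\|=1$ do not form a compact family in a naive topology, and a minimizing sequence can concentrate or degenerate toward the boundary of the cone of convex functions, along which $\mathcal L_X$ might a priori tend to $0$; moreover the weight $e^{\lan X,x\ran}$ couples the boundary measure $d\sigma$ to the bulk integral, so the clean cancellation available in the unweighted K\"ahler--Einstein case must be recovered by hand. I expect to overcome this as in \cite{ZZ1}: extract a limiting convex function, verify lower semicontinuity of $\mathcal L_X$ under the relevant convergence, and rule out the degenerate limits using reflexivity of $P$ together with the placement of the weighted barycenter at the origin, thereby upgrading the strict positivity coming from stability to the quantitative coercivity that yields properness of the modified $K$-energy.
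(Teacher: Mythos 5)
Your stability half is correct, and it is in fact \emph{more direct} than the paper's own argument: after the common reduction $\mathcal L(u)=\int_P\langle x,\nabla u\rangle e^{\theta(x)}\,dx$, the paper obtains nonnegativity only as a byproduct of a compactness--contradiction lemma (its Lemma 2.6, which gives the stronger coercivity $\mathcal L(u)\ge\lambda\int_{\partial P}u e^{\theta(x)}\,d\sigma$ for normalized $u$), whereas your pointwise gradient-monotonicity bound $\langle x,\nabla u(x)\rangle\ge\langle x,\nabla u(0)\rangle$ combined with the weighted barycenter condition $\int_P x\,e^{\theta(x)}\,dx=0$ gives $\mathcal L(u)\ge 0$ and the equality case (affine $u$) in two lines. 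That part stands as a genuinely different, more elementary proof of modified $K$-stability.

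The gap is in the properness half, and it sits exactly where you flagged it: the infimum bound $\inf\{\mathcal L(u):\int_{\partial P}ue^{\theta(x)}d\sigma=1,\ u\ \text{normalized}\}>0$ is not a soft upgrade of stability, and the mechanism you sketch (extract a limit, lower semicontinuity of $\mathcal L$, rule out degenerate limits via stability) cannot close it. Along a degenerating normalized sequence with $\int_{\partial P}u_ke^{\theta(x)}d\sigma=1$, the locally uniform limit is typically $u_\infty\equiv 0$ (the mass concentrates near vertices or facets), and $u_\infty\equiv 0$ is perfectly consistent with stability: lower semicontinuity gives $\mathcal L(u_\infty)\le\liminf\mathcal L(u_k)=0$ while stability gives $\mathcal L(u_\infty)\ge 0$, so there is no contradiction to extract. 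What actually closes the argument (the paper's Lemma 2.6) is different: for $u$ normalized with $u(0)=\inf_P u=0$, convexity gives $\langle x,\nabla u\rangle\ge u$, hence $\mathcal L(u)\ge\int_P u\,e^{\theta(x)}\,dx\ge 0$; therefore $\mathcal L(u_k)\to 0$ forces the \emph{bulk} integral $\int_P u_k e^{\theta(x)}dx\to 0$, and since $n+\theta(x)$ is bounded on $P$ one then reads off
\begin{equation*}
\mathcal L(u_k)=\int_{\partial P}u_k e^{\theta(x)}\,d\sigma-\int_P(n+\theta(x))u_k e^{\theta(x)}\,dx=1-o(1)\longrightarrow 1,
\end{equation*}
contradicting $\mathcal L(u_k)\to 0$. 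The point is that the boundary term never passes to the limit --- it is pinned at $1$ by the normalization --- and the bulk term is dominated by $\mathcal L$ itself; stability plays no role. Without this (or an equivalent) argument, the coercivity, and hence properness, is unproven.

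Two further items your outline elides. First, the weighted entropy term is \emph{not} bounded below by a constant on the normalized class (take $u=k|x|^2/2$: the entropy is $\sim -n\log k$); the correct statement, the paper's Lemma 3.6 extending \cite{ZZ1} to the weighted setting, is $\int_P\log\det(u_{ij})e^{\theta(x)}dx\le\mathcal L_B(u)+C$ with $\mathcal L_B$ a linear functional with bounded density, after which the coercivity above is used to absorb $\mathcal L_B(ru)$ for a suitable $r<1$. Second, the theorem asserts properness of $\mu_{\omega_g}$ in terms of $I_{\omega_g}$ modulo the torus; passing from polytope coercivity $\mathcal F(u)\ge\delta\int_P ue^{\theta(x)}dx-C_\delta$ back to the K\"ahler side requires the comparison $|J_{\omega_g}(\tilde\phi)-H(u_{\tilde\phi})|\le C$ for torus-normalized potentials (the paper's Lemma 3.9, proved via Green's function bounds for the weighted Laplacian), a step absent from your proposal.
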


Theorem \ref{WZZ}  gives a new proof of Wang-Zhu's result for the existence of    K\"ahler-Ricci solitons on any toric Fano manifold \cite{WZ}.   We can also  study   the existence of conical   K\"ahler-Ricci solitons on toric Fano manifolds by  showing  the properness of modified   Log $K$-energy.  As a  consequence, we give a  new proof of  Datar-Guo-Song-Wang Theorem in \cite{DGSW}. In particular, we have

\begin{theorem}\label{conical-theorem}  Let $ X$ be a soliton vector field on a toric Fano manifold $M$.\footnote{$X$ is not unique  in general for the existence of  conical K\"ahler-Ricci solitons while it is always  unique modulo ${\rm Aut}^0(M)$ for the existence of  K\"ahler-Ricci solitons \cite{TZ2}.}   Then for any $\beta\le 1$ there exists  a unique toric invariant conical K\"ahler-Ricci soliton which has conical angle  $2\pi\beta$  along  each face divisor $D_i$ of $M$.

\end{theorem}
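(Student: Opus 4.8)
The plan is to reduce the statement to a convex variational problem on the moment polytope and then run the properness-implies-existence scheme that underlies Theorem \ref{WZZ}. Since we only seek toric invariant conical solitons, I would first restrict to $T^n$-invariant K\"ahler potentials. Under the standard toric dictionary these correspond to convex functions on $\RR^n$, whose Legendre-dual symplectic potentials $u$ live on the moment polytope $P$ of $M$ with the Guillemin boundary behavior along the facets $F_i$ dual to the face divisors $D_i$. The prescribed cone angle $2\pi\beta$ along each $D_i$ is encoded in the equation $\text{Ric}(\omega)-L_X\omega=\beta\omega+(1-\beta)\sum_i[D_i]$, which in the toric coordinates becomes a real Monge-Amp\`ere equation $\det(D^2u)=e^{\ell_X}\rho_\beta$ for $u$ on $P$; here $\ell_X$ is the linear function on $P$ determined by $X$ (so that $e^{\ell_X}$ is the toric avatar of $e^{\theta_X}$), and $\rho_\beta$ is a density incorporating facet exponents governed by $(1-\beta)$.

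The heart of the matter is the properness of the associated modified Log $K$-energy $\mathcal K_{X,\beta}$, regarded as a functional on normalized symplectic potentials. Following the method of \cite{ZZ1}, I would write $\mathcal K_{X,\beta}$ as a weighted entropy term plus a linear term and reduce coercivity to an a priori lower bound of the form $\mathcal K_{X,\beta}(u)\ge \lambda\,\|u\|-C$ for potentials normalized to kill the torus action. The soliton weight is handled by integrating against the measure $e^{\langle X,\cdot\rangle}\,dx$ on $P$, and the defining property of the soliton vector field $X$---the vanishing of $F_X$ on $\eta_r(M)$, i.e. the weighted barycenter of $P$ sitting at the origin---is exactly what prevents the linear term from destroying coercivity, precisely as in the smooth case underlying Theorem \ref{WZZ}. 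The genuinely new input is the conical contribution: it appears as a boundary integral over $\p P$ with coefficient $(1-\beta)$, and since $\beta\le 1$ this term is nonnegative and may be absorbed, so that the properness available at $\beta=1$ propagates to every $\beta\le 1$.

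Granting properness, existence follows by the direct method. The functional $\mathcal K_{X,\beta}$ is convex and lower semicontinuous along the affine structure of symplectic potentials, hence a proper convex functional attains its infimum; the minimizer solves the Euler-Lagrange equation, which is exactly the real Monge-Amp\`ere equation above. Interior regularity for the real Monge-Amp\`ere operator together with the boundary asymptotics of $u$ near each facet $F_i$ then shows that the minimizer defines a smooth toric metric in the interior producing a cone of angle exactly $2\pi\beta$ along each $D_i$, i.e. the desired conical K\"ahler-Ricci soliton. Uniqueness comes from strict convexity of $\mathcal K_{X,\beta}$ along linear paths of symplectic potentials: two minimizers differ by an affine function, corresponding to an element of the torus preserving $X$, and after normalization they coincide; thus for the fixed soliton field $X$ the conical soliton is unique among toric invariant metrics.

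The main obstacle is the properness estimate itself. One must control the weighted entropy against the linear and conical terms uniformly, and in particular track the coefficient $(1-\beta)$ through the integration by parts that converts the entropy bound into an integral over $\p P$, checking that the normalization removing the soliton action remains compatible with the facet weighting for all $\beta\le 1$. A secondary difficulty is the boundary regularity of the minimizer at the divisors: one has to verify that the convex solution yields a metric with cone angle exactly $2\pi\beta$, rather than a milder or more degenerate singularity, which requires the refined asymptotic analysis of symplectic potentials near the facets of $P$.
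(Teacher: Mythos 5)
Your overall strategy---reduce to symplectic potentials on $P$, prove properness of the reduced log $K$-energy by the method of \cite{ZZ1}, note that the soliton condition forces the weighted barycenter $\tau$ of (\ref{vanishing-condition}) to be $0$ so all angles equal $2\pi\beta$, and get uniqueness from convexity---is the same as the paper's (Lemma \ref{equal-two-energy}, Proposition \ref{properness-conical}, Theorem \ref{DGSW}). But your key properness step contains a sign error that makes the argument fail as stated. Under the reduction of Lemma \ref{equal-two-energy}, the conical terms in (\ref{logkenergy}) do \emph{not} produce a nonnegative boundary integral with coefficient $(1-\beta)$ that can be dropped: they replace the linear functional $\mathcal L(u)$ by $\mathcal L_{\beta,\tau}(u)=\beta\bigl(\mathcal L(u)-\int_P\langle\tau,\nabla u\rangle e^{\theta(x)}\,dx\bigr)$, which for the soliton field ($\tau=0$) is just $\beta\mathcal L(u)$. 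Equivalently $\mathcal F_{\beta,0}(u)=\mathcal F(u)-(1-\beta)\mathcal L(u)$, and since $\mathcal L(u)\ge 0$ on normalized potentials (inequality (\ref{inq2})), the conical correction is \emph{nonpositive}: it weakens coercivity rather than reinforcing it, so properness at $\beta=1$ does not propagate to $\beta<1$ by absorption. The correct mechanism, which is what the paper does, is to rerun the coercivity argument of Section 3.2 with the scaled functional: one still has $\mathcal L_{\beta,\tau}(u)\ge\beta\int_P u e^{\theta(x)}\,dx$ for $u$ normalized at $\tau$ (because $(x-\tau)\cdot\nabla u-u\ge 0$ by the Legendre normalization), and applying the entropy estimate of Lemma \ref{determinent-estimate} to a rescaled potential then gives an estimate of the form $\mathcal F_{\beta,0}(u)\ge\delta H(u)-C_{\delta,\beta}$ as in Proposition \ref{properness-conical}. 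This works for every fixed $\beta\in(0,1]$, but the constants depend on $\beta$ and degenerate as $\beta\to 0$; there is no monotone comparison with the $\beta=1$ case.

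There is a second gap in your existence step. The assertion that a proper, convex, lower semicontinuous functional "attains its infimum" is not valid on the infinite-dimensional, non locally compact space of symplectic potentials without a compactness theorem for minimizing sequences, and even granted a minimizer, its regularity and the verification that the metric has cone angle exactly $2\pi\beta$ along each $D_i$ constitute the hard analytic content (this is where \cite{ZZ2} and \cite{BB} expend most of their effort). The paper sidesteps this by running the continuity method for (\ref{conical-kr-soliton}): monotonicity of $\mu_{\omega_g,D}$ along the path plus properness bounds $I_{\omega_g}(\phi_t)$, hence gives the $C^0$-estimate, while the higher-order and boundary estimates are quoted from \cite{TZ1,JMR,DGSW} together with Donaldson's H\"older estimate for Legendre potentials \cite{D3}. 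So either import the weak-compactness and regularity theory of \cite{ZZ2,BB} to make your variational route honest, or feed your properness estimate into the continuity method as the paper does. Your uniqueness argument (strict convexity of the reduced functional modulo affine functions, i.e.\ modulo the torus) does match the paper's.
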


 We note that the above energy argument was  used by other people, such as in \cite{JMR, LS, T3, LZ} to study the  conical   K\"ahler-Einstein metrics on general  Fano manifolds.  Theorem \ref {WZZ} and Theorem \ref{conical-theorem} will be proved in Section 2-3 and Section 4, respectively.

\vskip10pt
\noindent {\bf Acknowledgements.}   The  third author would like to thank
professor Gang  Tian for  his interest  to the paper  and sharing his insight  in K\"ahler geometry. 

\vskip 20pt

\section{New version  of  modified Futaki invariant}

\vskip 10pt

According to \cite{D1},  a  {\it test-configuration} on a  Fano manifold $M$ is a scheme $\mathcal M$ with a $\mathbb C^*$- action which consists of two integredients:
\begin{enumerate}
\item an flat $\mathbb C^*$-equivarant map $\pi: \mathcal M\to \mathbb C$ such that $\pi^{-1}(t)$ is biholomorphic to $M$ for any $t\neq 0$;

\item an holomorphic line bundle $\mathfrak  L$ on $\mathcal M$ such that $\mathfrak L|_{\pi^{-1}(t)}$ is isomorphic to $K_M^{-r}$ for some integer $r>0$  for any $t\neq 0$.
\end{enumerate}

\begin{definition}
$\mathcal M$ is called a test-configuration associated to the soliton action  induced by $X$ if $\sigma_t^v$ communicate  to $\sigma_t^X$,  where  $\sigma_t^X$ and $\sigma_t^v$  are  two  lifting one-parameter subgroups on
$\mathcal M$  induced by $X$ and the  holomorphic vector field $v$ associated to  the $\mathbb C^*$- action, respectively. If furthermore  the center fibre  $M_0=\pi^{-1}(0)$ is a normal variety we call  $\mathcal M$ is a special degeneration.
In particular, if $\mathcal M\cong M\times  \mathbb C$, $\mathcal M$ is called a trivial  test-configuration.
\end{definition}

For simplicity, we let $L=\mathfrak L|_{ M_0}$.  Let $\sigma_t^X(k), \sigma_t^v(k)$  be two induced one-parameter subgroups on  $H^0( M_0, L^k )$ by  $\sigma_t^X,  \sigma_t^v$, respectively. Denote by $\{e^{X_\alpha^k}\}$ and
 $\{e^{v_\alpha^k}\}$  be  eigenvalues of  actions $\sigma_1^X$ and $\sigma_1^v$. We set
$$ S_1=\sum_i e^{\frac{X_\alpha^k}{k}}v_\alpha^k, \ \ S_2=\frac{1}{2}\sum_i e^{\frac{X_\alpha^k}{k}}\frac{X_\alpha^k}{k}\frac{v_\alpha^k}{k}.$$
Then
$$S_1=\frac{\partial}{\partial t}{\rm trace}(e^{sX^k+tv^k})|_{s=\frac{1}{k},\ t=0}, \ \ S_2=\frac{1}{2 k^2}\frac{\partial}{\partial s}\frac{\partial}{\partial t}{\rm trace}(e^{sX^k+tv^k})|_{s=\frac{1}{k},t=0},$$
where $X^k= (X_\alpha^k)$, $v^k= (v_\alpha^k)$ are two vectors as  elements of  Lie algebra associated to    $\sigma_t^X(k)$, $\sigma_t^v(k)$, respectively.
Our  observation is  that both $S_1$ and $S_2$ can be computed by  the equivariant Riemann-Roch formula with $G=(S^1)^2$-action. In fact
\beqn\label{equiva-Riemann-Roch formula}  {\rm trace}(e^{sX^k+tv^k})=\int_{M_0} {\rm ch}^G(-k L){\rm Td}^G(X_0),
\eeqn
where ${\rm ch}^G(-kL)$ is a $G$-equivariant Chern character of  multi-line bundle  $-kL$ and ${\rm Td}^G(M_0)$ is a  $G$-equivariant Todd character of $M_0$ \cite{AS}.   In particular, for a  special degeneration   associated to the soliton action, we can compute both $S_1$ and  $S_2$ precisely in the following.

According to \cite{DT}, for  a special degeneration,  there exists a hermitian metric  $h$ on $(X_0, L)$ such that  curvature $c(h, L)$ is a  r-multiple of  an admissible metric $g$ with  property:
there exists a $L^p$-integrable function $h_g$ (for any $p \ge 0$)  with respect to $g$ such that

i)  ${\rm Ric}(\omega_g)-\omega_g=\sqrt{-1}\partial\bar\partial h_g$, on the smooth part of $M_0$;

ii) $v(h_g)$ is $L^1$-integrable with respect to $g$.
We note that $v$ is an admissible  holomorphic vector field  on $M_0$  \cite{DT}.

For any admissible   holomorphic vector field $w$ on $M_0$,  we define a function  by
\beqn \label{potential-vector}
\theta_{w}=-\frac{L_w h}{h}.
\eeqn
Then a direct computation shows
\beqs
\sqrt{-1}\overline\partial \theta_{w}=i_w \omega_g,
\eeqs
and consequently
\begin{align}\label{equ}
\Delta\theta_w=\frac{L_w\omega_g^n}{\omega_g^n}.
\end{align}

\begin{lemma}
 $ \theta_{w}$  satisfies
 \beqn \label{potential-vector-normalization}
\triangle \theta_{w} +w(h_g)+\theta_{w}=0.
\eeqn
\end{lemma}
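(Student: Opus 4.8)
\emph{Proposal.} The plan is to obtain the identity by differentiating the Ricci potential equation along $w$, and then to pin down the resulting additive constant using the normalization built into the definition \eqref{potential-vector} of $\theta_w$. First I would record two auxiliary Lie-derivative identities on the smooth locus of $M_0$. Since $\omega_g$ is closed, Cartan's formula together with the defining relation $\sqrt{-1}\bar\partial\theta_w=i_w\omega_g$ gives
\[
L_w\omega_g=d(i_w\omega_g)=\sqrt{-1}\partial\bar\partial\theta_w .
\]
Next, writing the Ricci form locally as ${\rm Ric}(\omega_g)=-\sqrt{-1}\partial\bar\partial\log\det(g_{i\bar j})$ and using that a holomorphic $w$ preserves the bidegree, so that $L_w$ commutes with $\partial$ and $\bar\partial$, I obtain $L_w{\rm Ric}(\omega_g)=-\sqrt{-1}\partial\bar\partial\big(w^i\partial_i\log\det(g_{i\bar j})\big)$. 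By \eqref{equ} one has $\Delta\theta_w=\partial_iw^i+w^i\partial_i\log\det(g_{i\bar j})$, and since $\partial_iw^i$ is holomorphic it is annihilated by $\partial\bar\partial$; hence
\[
L_w{\rm Ric}(\omega_g)=-\sqrt{-1}\partial\bar\partial(\Delta\theta_w).
\]

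Now I would apply $L_w$ to the Ricci potential equation ${\rm Ric}(\omega_g)-\omega_g=\sqrt{-1}\partial\bar\partial h_g$. Using the two identities above and $L_w(\sqrt{-1}\partial\bar\partial h_g)=\sqrt{-1}\partial\bar\partial(w(h_g))$, all terms collapse to
\[
\sqrt{-1}\partial\bar\partial\big(\Delta\theta_w+\theta_w+w(h_g)\big)=0 ,
\]
so $\Delta\theta_w+\theta_w+w(h_g)$ is pluriharmonic, hence constant on the smooth locus. It remains to show this constant vanishes.

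The constant is forced to be $0$ by the specific normalization in \eqref{potential-vector}. I would check this by a direct local computation: since $c(h,L)$ is a multiple of $\omega_g$ and $L$ restricts to $K_M^{-r}$, the metric $h$ equals, up to the canonical normalization, $h=(\det(g_{i\bar j}))^{r}e^{rh_g}$ in the holomorphic frame $s_0=\partial_{z^1}\wedge\cdots\wedge\partial_{z^n}$ of $K_M^{-1}$, while the canonical (Lie-derivative) lift of $w$ acts on $s_0^{\otimes r}$ with weight $-r\,\partial_iw^i$. Feeding this into $\theta_w=-L_wh/h$ produces, after the lift normalization is accounted for, exactly $\theta_w=-\partial_iw^i-w^i\partial_i\big(\log\det(g_{i\bar j})+h_g\big)$, and combined with the formula for $\Delta\theta_w$ above the three terms cancel pointwise. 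Alternatively I would integrate the identity against $e^{h_g}\omega_g^n$: since $\Delta\theta_w+w(h_g)$ is the divergence of $w$ with respect to the weighted volume $e^{h_g}\omega_g^n$, its integral vanishes by Stokes, reducing the constant to a multiple of $\int_{M_0}\theta_we^{h_g}\omega_g^n$, which is $0$ by the chosen normalization of $\theta_w$.

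The main obstacle is precisely this last step: tracking the additive constant, i.e. verifying that the lift implicit in $-L_wh/h$ is compatible with the normalization of the Ricci potential $h_g$, and---because $M_0$ is only an admissible, possibly singular, variety---justifying the global conclusions ``pluriharmonic $\Rightarrow$ constant'' and the vanishing of the boundary integral across the singular set. Here one invokes the $L^p$-integrability of $h_g$ and the $L^1$-integrability of $w(h_g)$ guaranteed by admissibility, so that all integrations by parts remain valid on the smooth part of $M_0$.
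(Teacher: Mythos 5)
Your proposal is correct, and its decisive step --- the ``direct local computation'' you offer for pinning down the constant --- is in substance the paper's entire proof; but you embed it in a detour that manufactures the only genuine difficulties. The paper argues pointwise and invariantly: since $h$ (i.e.\ $h^{1/r}$, viewed as a metric on $-K_{M_0}$) and the volume form $\omega_g^n$ are metrics on the same line bundle whose curvatures differ by $\sqrt{-1}\partial\bar\partial h_g$, one has $h_g=\log\frac{h}{\omega_g^n}+\mathrm{const}$ on the smooth part; hence, using (\ref{equ}) and the definition (\ref{potential-vector}),
\[
\triangle\theta_w+w(h_g)+\theta_w
=\frac{L_w\omega_g^n}{\omega_g^n}
+\left(\frac{L_wh}{h}-\frac{L_w\omega_g^n}{\omega_g^n}\right)
-\frac{L_wh}{h}=0 .
\]
No additive constant ever appears: the normalization is already built into $\theta_w=-L_wh/h$, and the constant in $h_g$ is harmless because only $w(h_g)$ enters. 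Your coordinate identity $\theta_w=-\partial_iw^i-w^i\partial_i\bigl(\log\det(g_{i\bar j})+h_g\bigr)$ combined with $\Delta\theta_w=\partial_iw^i+w^i\partial_i\log\det(g_{i\bar j})$ is exactly this cancellation written in a local frame, so that part of your proposal stands on its own as a complete proof. By contrast, your framing step --- Lie-differentiating the Ricci potential equation to get $\sqrt{-1}\partial\bar\partial\bigl(\Delta\theta_w+\theta_w+w(h_g)\bigr)=0$ --- only yields the identity up to a pluriharmonic function, and ``pluriharmonic $\Rightarrow$ constant'' is not automatic on the (generally non-compact) smooth locus of a singular center fibre $M_0$; this is an obstacle of your own making, which the paper's pointwise argument never encounters.

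Be careful also with your alternative way of fixing the constant: you invoke $\int_{M_0}\theta_we^{h_g}\omega_g^n=0$ ``by the chosen normalization of $\theta_w$'', but no integral normalization is imposed in this section --- the only normalization is the definition $\theta_w=-L_wh/h$, and (granted constancy) the vanishing of this integral is equivalent to the lemma itself, so quoting it as given is circular. It can be repaired: $e^{h_g}\omega_g^n$ agrees up to a constant factor with the volume form determined by $h$, so $\int_{M_0}\theta_we^{h_g}\omega_g^n$ is a multiple of $\int_{M_0}L_w\bigl(h^{1/r}\bigr)$, which vanishes by Stokes. But justifying Stokes across the singular set of $M_0$ (via the admissibility and integrability hypotheses) is exactly the kind of extra work that the pointwise cancellation renders unnecessary.
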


\begin{proof}  It suffices  to verify (\ref{potential-vector-normalization}) on the smooth part of $X_0$.
Since
 $${\rm Ric}(\omega_g)-\omega_g={\rm Ric}(\omega_g)-{\rm Ric}(h)=\sqrt{-1}\partial\bar{\partial}\log\frac{h}{\omega_g^n},$$
we have
  $$h_g=\log\frac{h}{\omega_g^n}+const.$$
  It follows
  $$w(h_g)=w(\log\frac{h}{\omega_g^n}).$$
Thus
\begin{align}
\triangle \theta_{w} +w(h_g)+\theta_{w}=\frac{L_w\omega_g^n}{\omega_g^n}+w(\log\frac{h}{\omega_g^n})-\frac{L_w h}{h}=0.\notag
\end{align}
\end{proof}

\begin{lemma}
\label{S_1+S_2} Let $ \theta_{X}$ and  $\theta_{v}$ be defined by (\ref{potential-vector}) for the vectors $X$ and $v$, respectively.
 Then,  instead of  $k$  by $kr$,  we have
\beqn \label{S_1}  S_1&=&k^{n+1}\int_{\mathcal{M}_0} \theta_{v} e^{\theta_{X}}\frac{\omega_g^n}{n!} \nonumber\\[5pt]
 &&+\frac{k^n}{2}
\left[ n\int_{\mathcal{M}_0} \theta_{v} e^{\theta_{X}}\frac{\omega_g^n}{n!}+ \int_{\mathcal{M}_0} \theta_{X}\theta_{v} e^{\theta_{X}}\frac{\omega_g^n}{n!}
  -\int_{\mathcal{M}_0} v(h_g-\theta_{X}) e^{\theta_X}\frac{\omega_g^n}{n!}\right]+O(k^{n-1}),\\[7pt]
 \label{S_2} S_2
&=& \frac{k^{n}}{2}\int_{\mathcal{M}_0}  \theta_{v} \theta_X e^{\theta_{X}}\frac{\omega_g^n}{n!}
+O(k^{n-1}) .
\eeqn
\end{lemma}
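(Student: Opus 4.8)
The plan is to read off both asymptotics from the equivariant Riemann--Roch formula for ${\rm trace}(e^{sX^k+tv^k})$, after realizing the equivariant characters by Chern--Weil theory with the admissible metric $h$ of Ding--Tian. After replacing $k$ by $kr$ the curvature of $L$ becomes $\omega_g$, so I would write the equivariant Chern character as the equivariantly closed form ${\rm ch}^G(L^k)(sX+tv)=e^{k(\omega_g+s\theta_{X}+t\theta_{v})}$, where $\theta_{X},\theta_{v}$ are exactly the potentials \eqref{potential-vector}, i.e. the Hamiltonian (moment-map) components of the equivariant curvature. The equivariant Todd class is expanded by Chern--Weil as ${\rm Td}^G(M_0)(sX+tv)=1+\tfrac12\bigl(\rho+s\,\theta_{X}^{\rho}+t\,\theta_{v}^{\rho}\bigr)+(\text{degree}\ge 4)$, with $\rho={\rm Ric}(\omega_g)$ and $\theta_{w}^{\rho}$ the $\rho$-potential of $w$; since $\rho-\omega_g=\sqrt{-1}\partial\bar\partial h_g$ and $w$ is holomorphic, $i_w\sqrt{-1}\partial\bar\partial h_g=\sqrt{-1}\bar\partial(w(h_g))$ gives $\theta_{w}^{\rho}=\theta_{w}+w(h_g)$ up to a constant. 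Finally I substitute $s=1/k$, which turns $e^{ks\theta_{X}}$ into $e^{\theta_{X}}$ and accounts for the factor $e^{\theta_{X}}$ in every term of \eqref{S_1}--\eqref{S_2}.

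For $S_2=\tfrac{1}{2k^2}\partial_s\partial_t\,{\rm trace}|_{s=1/k,\,t=0}$ the computation is short: the only contribution surviving to order $k^{n}$ comes from differentiating the two exponential factors, producing $(k\theta_{X})(k\theta_{v})$ paired with the top power $\tfrac{k^n}{n!}\omega_g^n$ and the constant term $1$ of ${\rm Td}^G$. This yields $\tfrac{k^n}{2}\int_{M_0}\theta_{v}\theta_{X}e^{\theta_{X}}\tfrac{\omega_g^n}{n!}$, since every other term carries at least one factor $s=1/k$ or one positive-degree Todd factor (equivalently one lower power of $\omega_g$) and is therefore $O(k^{n-1})$. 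For $S_1=\partial_t\,{\rm trace}|_{s=1/k,\,t=0}=\int_{M_0}e^{k\omega_g}e^{\theta_{X}}\bigl[k\theta_{v}\,{\rm Td}^G(sX)+\partial_t{\rm Td}^G\bigr]$, the leading $k^{n+1}$ term is $k\theta_{v}$ against $\tfrac{k^n}{n!}\omega_g^n$, giving $k^{n+1}\int_{M_0}\theta_{v}e^{\theta_{X}}\tfrac{\omega_g^n}{n!}$. The order-$k^n$ terms arise from three places: the form part of $\tfrac12\rho$ paired with $\tfrac{k^{n-1}}{(n-1)!}\omega_g^{n-1}$, which after $\rho=\omega_g+\sqrt{-1}\partial\bar\partial h_g$ supplies $n\int_{M_0}\theta_{v}e^{\theta_{X}}\tfrac{\omega_g^n}{n!}$ together with an $h_g$-term; the moment part $\tfrac12 s\,\theta_{X}^{\rho}$, which supplies $\int_{M_0}\theta_{X}\theta_{v}e^{\theta_{X}}\tfrac{\omega_g^n}{n!}$ plus an $X(h_g)$-term; and $\partial_t{\rm Td}^G=\tfrac12\theta_{v}^{\rho}$, which supplies a $\theta_{v}$-term and a $v(h_g)$-term.

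The crux is to show that all the $h_g$-dependent contributions (together with the extra $\theta_{v}$ produced by $\partial_t{\rm Td}^G$) assemble into $-\int_{M_0}v(h_g-\theta_{X})e^{\theta_{X}}\tfrac{\omega_g^n}{n!}$, which is precisely $-F_{X}(v)$ from \eqref{tian-zhu-inv}. Here I would integrate by parts on $M_0$, rewriting $\int_{M_0}\theta_{v}e^{\theta_{X}}\sqrt{-1}\partial\bar\partial h_g\wedge\omega_g^{n-1}$ as an integral of $h_g\,\triangle(\theta_{v}e^{\theta_{X}})$, and then use the weighted divergence identity $\int_{M_0}(\triangle f+\widehat X(f))\,e^{\theta_{X}}\tfrac{\omega_g^n}{n!}=0$ (valid since $\partial_i(\det g\,g^{i\bar j})=0$ on a K\"ahler manifold) together with the normalization identity \eqref{potential-vector-normalization} of the preceding Lemma, $\triangle\theta_{w}+w(h_g)+\theta_{w}=0$, and the commutation relation $X(\theta_{v})=v(\theta_{X})$ coming from $[X,v]=0$, to eliminate every occurrence of $\triangle\theta_{X}$, $\triangle\theta_{v}$, and $\triangle h_g$. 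The main obstacle is exactly this last step: first, the bookkeeping of the equivariant Todd class to the order needed and the matching of its moment data with $\theta_{w}^{\rho}$; and second, the justification of the integration by parts on the \emph{singular} center fibre $M_0$. The latter is where admissibility enters decisively --- the $L^p$-integrability of $h_g$ (all $p\ge 0$) and the $L^1$-integrability of $v(h_g)$ guaranteed by Ding--Tian control the contributions near the singular set, so that Stokes' theorem applies with no boundary term and the neglected remainders are genuinely $O(k^{n-1})$.
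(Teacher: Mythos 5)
Your proposal is correct and takes essentially the same route as the paper: the identical equivariant Riemann--Roch expansion with ${\rm ch}^G=e^{k\omega_g+ks\theta_X+kt\theta_v}$ and ${\rm Td}^G=1+\tfrac12 c^G_1+\cdots$, the same identification of the three order-$k^n$ contributions to $S_1$, and the same integration by parts against $e^{\theta_X}\tfrac{\omega_g^n}{n!}$ combined with the normalization identity $\triangle\theta_w+w(h_g)+\theta_w=0$. Your way of writing the equivariant first Chern form via the Ricci moments $\theta_w^{\rho}=\theta_w+w(h_g)$ coincides term by term with the paper's $c^G_1={\rm Ric}(\omega_g)-s\triangle\theta_X-t\triangle\theta_v$, precisely because of that same identity, so the two computations are the same bookkeeping in different notation.
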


\begin{proof}
Since
$${\rm ch}^G(-kK_{M_0})=e^{k\omega_g+ks\theta_{X}+kt\theta_{v}}$$
 and
$${\rm Td}^G(M_0)=1+\frac{1}{2}c^G_1+ \sum_{i+j\ge 2} a_{ij}s^it^j+2l-{\rm forms} ~(l\ge 2),$$
where $c^G_1$ is the   first Chern $G$-equivariant form, we have
\beqn
&&\frac{d}{dt} \Big |_{t=0}{\rm trace}(e^{\frac{1}{k}X+tv}) \nonumber\\[4pt]
&&=k\int_{M_0}\theta_{v} e^{k\omega_g+\theta_{X}}{\rm Td}^G(M_0)+\int_{M_0} e^{k\omega_g+\theta_X}\frac{d}{dt}{\rm Td}^G(M_0)  \nonumber\\[5pt]
&&=k^{n+1}\int_{M_0} \theta_{v} e^{\theta_X}\frac{\omega_g^n}{n!}\wedge(1+ \frac{1}{2}c^G_1)+\frac{1}{2}k^n\int_{M_0} e^{\theta_{X}}\frac{\omega_g^{n-1}}{n!}\wedge c^G_1\nonumber\\
&&+\int_{M_0} e^{k\omega_g+\theta_X}(\frac{d}{dt}c^G_1)\frac{\omega_g^n}{n!}+O(k^{n-2}). \nonumber
\eeqn
Note that
$$c^G_1={\rm Ric}(\omega_g)-s\Delta \theta_{V_0}-t\Delta \theta_{W_0}.$$
Thus
\beqn
\label{estimate-1} &&\frac{d}{dt} \Big |_{t=0}{\rm trace}(e^{\frac{1}{k}X+tv}) \nonumber\\
&&=k^{n+1}\int_{M_0} e^{\theta_{X}}\theta_{v}\frac{\omega_g^n}{n!}+\frac{k^n}{2} \left(\int_{M_0} e^{\theta_{X}}\theta_{v} S\frac{\omega_g^n}{n!} -\int_{M_0}e^{\theta_{X}}\theta_{v}\triangle \theta_{X} \frac{\omega_g^n}{n!}- \int_{M_0} e^{\theta_{X}} \triangle\theta_{v} \frac{\omega_g^n}{n!}\right)+O(k^{n-1}).
\eeqn
On the other hand, using the integration by parts, it is  easy to see that
\beqn\label{estimate-2}
\int_{M_0} \theta_{v}e^{\theta_{X}} S\frac{\omega_g^n}{n!}&=&n\int_{M_0} \theta_{v}e^{\theta_{X}}\frac{\omega_g^n}{n!}+\int_{M_0} \theta_{v} e^{\theta_{X}}\triangle h_g \frac{\omega_g^n}{n!}\nonumber\\
&=&n\int_{M_0} \theta_{v} e^{\theta_{X}}\frac{\omega_g^n}{n!}-\int_{M_0} \theta_{v} e^{\theta_{X}} X(h_g)\frac{\omega_g^n}{n!}-\int_{M_0}  v(h_g)e^{\theta_X}\frac{\omega_g^n}{n!}.
\eeqn
Hence, by using the relation (\ref{potential-vector-normalization}) for $\theta_X$,  we will get (\ref{S_1}) from (\ref{estimate-1}) and  (\ref{estimate-2}) immediately.

The proof of  (\ref{S_2}) is easy. We skip it.
\end{proof}

Let $N_k={\rm dim}H^0(M_0, L^k)$.  Then by the Riemann-Roch formula, we have
\beq
N_k=\int_{M_0} ch(-kK_{M_0}) Td({M_0})=\int_{M_0} e^{k\omega_g}Td(M_0).
\eeq
Note that the Todd class is given by
$$Td(M_0)=1+\frac{1}{2}c_1(M_0)+\cdot\cdot\cdot,$$
where $c_1(M_0)$ is the first  Chern class of $M_0$. Thus
\beq
N_k= A_0 k^n + B_0k^{n-1}+O(k^{n-2}),
\eeq
where
\beq
A_0=\int_{M_0} \frac{\omega_g^n}{n!}={\rm Vol}(M_0), \ \ B_0=\frac{1}{2}\int_{M_0} S\,\frac{\omega_g^n}{n!}=\frac{n}{2}{\rm Vol}(M_0).
\eeq
Here $S$ is the scalar curvature of $\omega_g$.

By Lemma \ref{S_1+S_2} , we  can write $\frac{S_1-S_2}{kN_k}$ as an expansion in $k^{-1}$,
\beqn \label{expansion}
-\frac{S_1-S_2}{kN_k}= F_0+F_1k^{-1}+o(k^{-2}).
\eeqn

\begin{proposition}\label{definition-f_1}
For a special degeneration on a Fano manifold, we have
$$F_1=\frac{1}{2}F_0=
\frac{1}{2{\rm Vol}(M_0)}\int_{M_0} v(h_g-\theta_{X})  e^{\theta_X}\,\frac{\omega_g^n}{n!}.
$$
\end{proposition}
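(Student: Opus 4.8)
The plan is to read off $F_0$ and $F_1$ directly from the expansions already in hand and then reduce the relation $F_1=\frac12F_0$ to a single integral identity provable by parts. First I would form $S_1-S_2$ using Lemma \ref{S_1+S_2}: the term $\int_{M_0}\theta_X\theta_v e^{\theta_X}\frac{\omega_g^n}{n!}$ in (\ref{S_1}) is exactly cancelled by (\ref{S_2}). Writing
\[
I_1=\int_{M_0}\theta_v e^{\theta_X}\frac{\omega_g^n}{n!},\qquad
I_2=\int_{M_0} v(h_g-\theta_X)e^{\theta_X}\frac{\omega_g^n}{n!},
\]
this gives $S_1-S_2=k^{n+1}I_1+\frac{k^n}{2}(nI_1-I_2)+O(k^{n-1})$. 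Combining with $kN_k=A_0k^{n+1}+B_0k^n+O(k^{n-1})$ and expanding $\frac{1}{kN_k}$ as a geometric series in $k^{-1}$, the coefficients of $k^0$ and $k^{-1}$ in $-\frac{S_1-S_2}{kN_k}$ are
\[
F_0=-\frac{I_1}{A_0},\qquad F_1=\frac{1}{A_0}\Big(\frac{I_1 B_0}{A_0}-\frac{nI_1-I_2}{2}\Big).
\]
Substituting $B_0/A_0=n/2$ collapses the bracket to $I_2/2$, so $F_1=I_2/(2A_0)$, which is the stated formula since $A_0={\rm Vol}(M_0)$.

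It then remains to prove $F_1=\frac12F_0$, i.e. $I_2=-I_1$. I would rewrite the integrand of $I_2$ using the normalization (\ref{potential-vector-normalization}) applied to $v$, which gives $v(h_g)=-\Delta\theta_v-\theta_v$, hence $v(h_g-\theta_X)=-\Delta\theta_v-\theta_v-v(\theta_X)$ and
\[
I_2=-I_1-\int_{M_0}\big(\Delta\theta_v+v(\theta_X)\big)e^{\theta_X}\frac{\omega_g^n}{n!}.
\]
The claim thus reduces to the vanishing of the last integral. Using (\ref{equ}) in the form $\Delta\theta_v\,\omega_g^n=L_v\omega_g^n$ together with $v(\theta_X)e^{\theta_X}=L_v(e^{\theta_X})$, the integrand is precisely $L_v\big(e^{\theta_X}\frac{\omega_g^n}{n!}\big)$; being the Lie derivative of a top-degree form it equals $d\,i_v\big(e^{\theta_X}\frac{\omega_g^n}{n!}\big)$, so by Stokes its integral is zero. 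This yields $I_2=-I_1$, whence $F_1=I_2/(2A_0)=-I_1/(2A_0)=\frac12F_0$, completing the proof.

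The main obstacle is not the algebra but the justification of this last integration by parts on the central fibre $M_0$, which for a special degeneration is only a normal variety. One must show that the Stokes step contributes nothing from the singular locus, working on the smooth part $M_0^{\rm reg}$ and controlling the flux of $i_v\big(e^{\theta_X}\frac{\omega_g^n}{n!}\big)$ through shrinking neighborhoods of the singular set. This is exactly where the admissibility of $v$ and the integrability hypotheses (i)--(ii) on $h_g$ (in particular $v(h_g)\in L^1$, with $\theta_v,\theta_X$ bounded) enter; once the boundary terms are shown to vanish, the identity $I_2=-I_1$ and hence $F_1=\frac12F_0$ hold verbatim.
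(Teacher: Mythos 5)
Your proposal is correct and takes essentially the same route as the paper: you expand $-\frac{S_1-S_2}{kN_k}$ using Lemma \ref{S_1+S_2} (noting the cancellation of the $\theta_X\theta_v$ terms and $B_0/A_0=n/2$), and then reduce $F_1=\frac12 F_0$ to the identity $\int_{M_0} v(h_g-\theta_X)e^{\theta_X}\frac{\omega_g^n}{n!}=-\int_{M_0}\theta_v e^{\theta_X}\frac{\omega_g^n}{n!}$, which is exactly the paper's use of (\ref{potential-vector-normalization}) for $\theta_v$. The only difference is that you make explicit the Lie-derivative/Stokes step and the care needed near the singular locus of $M_0$, details the paper leaves implicit.
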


\begin{proof}
By using the relation (\ref{potential-vector-normalization}) for $\theta_v$, we have
$$\int_{M_0}  v(h_g-\theta_{X}) e^{\theta_X}\,\frac{\omega_g^n}{n!}=-\int_{M_0}  \theta_v e^{\theta_{X}}\,\frac{\omega_g^n}{n!}.$$
Then the Proposition follows from Lemma \ref{S_1+S_2} immediately.
\end{proof}

For a  general test-configuration  associated to the soliton action,  by  the equivariant Riemann-Roch formula  (\ref{equiva-Riemann-Roch formula}),
 we write $S_1$ and $S_2$ formally as,
\beqs
&&S_1=Ak^{n+1}+Bk^n +O(k^{n-1}),\\
&&S_2=Ck^{n}+Dk^{n-1} +O(k^{n-2}).
\eeqs
Then the invariant $F_1$ in (\ref{expansion}) is equal to $\frac{2(B-C)-nA}{2A_0}$. 
We call this quantity the {\it modified Futaki invariant} for  the test-configuration  
$\mathcal M$  associated  to the soliton vector field $X$.

\begin{remark}
1) In \cite{Be2},  Berman defines  the modified Futaki invariant by $F_0$ for any special degeneration  associated  to the soliton vector field  $X$.  Proposition \ref{definition-f_1} means  that our  definition coincides  Berman's case.
But in general $F_0$ will be  different to $F_1$ as showed in next section for  toric degenerations  on a toric Fano manifold.  In fact,   we  will show  that  the invariant $F_1$  comes natrually from  the study of  
 modified K-energy on toric manifolds (cf. Section 3).

2) Proposition \ref{definition-f_1}  also shows that  the Donaldson invariant $F_1$ in \cite {D1} coincides with  the generalized Futaki invariant defined by Tian in \cite{T1} for special degenerations.
\end{remark}

As in \cite {T1, D1},  we introduce a notation of  modified $K$-stability for any  Fano manifold $M$  via the quantity $F_1$.

\begin{definition}
A Fano manifold $M$ is called modified $K$-semi-stable  if $F_1\ge 0$ for any  test-configuration   associated to the soliton action of $M$ and  $M$ is modified $K$-stable  if in addition $F_1=0$ happens if and only if
the test-configuration is trivial.
\end{definition}

Due to the celebrated  solving of Yau-Tian-Donaldson's conjecture for the existence of K\"ahler-Einstein metrics \cite{T3, CDS}, we propose the following  generalized  Yau-Tian-Donaldson's conjecture for  the existence of K\"ahler-Ricci solitons.

\begin{conjecture}\label{generalized-conjecture}
 A Fano manifold $M$  admits a  K\"ahler-Ricci soliton  if and only if  $M$ is modified $K$-stable.
\end{conjecture}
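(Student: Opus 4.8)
The plan is to establish the two implications separately, following the template of the Yau--Tian--Donaldson program for K\"ahler--Einstein metrics but carried out in the modified (soliton-weighted) setting built above, and organized around the invariant $F_1$ that we have just defined for a general test-configuration associated to the soliton action.

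For the direction \emph{existence $\Rightarrow$ stability}, I would start from the modified $K$-energy $\mu_X(\cdot)$, whose critical points are the K\"ahler--Ricci solitons with respect to $X$ and whose first variation recovers the integrand $v(h_g-\theta_X)e^{\theta_X}$ appearing in (\ref{tian-zhu-inv}). The first step is to attach to any test-configuration $\mathcal M$ associated to $X$ a weak geodesic ray in the space of toric-invariant-type potentials (equivalently, a Bergman geodesic ray built from $\sigma^X_t$-equivariant bases of $H^0(M_0,L^k)$) and to identify the asymptotic slope of $\mu_X$ along this ray with the invariant $F_1$. Granting the Tian--Zhu and Berman--Xiong nonnegativity of $F_X(\cdot)$ at a genuine soliton, together with uniqueness of the soliton modulo $\mathrm{Aut}^0(M)$, convexity of $\mu_X$ along geodesics then forces $F_1\ge 0$, with $F_1=0$ only when the ray is trivial, i.e. $\mathcal M\cong M\times\mathbb{C}$. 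This is the accessible half.

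The serious content is \emph{stability $\Rightarrow$ existence}. Here I would run a soliton-adapted continuity method (either the modified Aubin path, or the normalized K\"ahler--Ricci flow whose fixed points are the soliton), prove openness by an implicit-function argument modulo the reductive automorphisms commuting with $X$, and then attack closedness. If closedness failed at some parameter, the family of approximate solitons would degenerate, and the crux is to produce a \emph{partial $C^0$ estimate} in the weighted setting: a uniform Bergman-kernel lower bound for the measure $e^{\theta_X}\omega_g^n$. Combined with Cheeger--Colding--Tian compactness, this would exhibit the Gromov--Hausdorff limit as a $\mathbb{Q}$-Fano variety $M_0$ carrying a singular K\"ahler--Ricci soliton, and Donaldson--Sun-type algebraicity would realize $M_0$ as the central fibre of a special degeneration compatible with $X$. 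Applying Proposition \ref{definition-f_1} to this degeneration gives $F_1=\tfrac12 F_0\le 0$, contradicting modified $K$-stability unless the degeneration is trivial, in which case the limit is already a smooth soliton on $M$.

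I expect the main obstacle to be exactly this weighted partial $C^0$ estimate together with the compatibility of the limiting degeneration with the prescribed soliton field $X$: the extra factor $e^{\theta_X}$ deforms both the reference measure and the relevant soliton-type Monge--Amp\`ere equation, so the Bergman-kernel and regularity arguments of the unweighted theory must be redone for the weighted volume, and one must ensure that the $\mathbb{C}^*$-action produced in the limit genuinely commutes with $\sigma^X_t$ (the hypothesis built into our definition of a test-configuration associated to the soliton action). The toric verification in Theorem \ref{WZZ}, where the weighted $K$-energy is shown to be proper by the method of \cite{ZZ1}, serves both as a sanity check of the slope/$F_1$ identification and as the model case in which every step above is carried out explicitly.
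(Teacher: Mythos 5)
Your statement is stated in the paper as a \emph{conjecture}, and the paper does not prove it: what the authors actually do is verify it only for toric Fano manifolds, by a route entirely different from yours. They reduce the modified $K$-energy to the convex-analytic functional $\mathcal F(u)=-\int_P\log\det(u_{ij})e^{\theta(x)}\,dx+\mathcal L(u)$ on symplectic potentials, prove the linear lower bound of Lemma \ref{stable} by a compactness argument on normalized convex functions, deduce properness of $\mathcal F$ in Proposition \ref{f-proper}, transfer this back to properness of $\mu_{\omega_g}$ modulo the torus in Theorem \ref{properness-energy}, and conclude existence from Lemma \ref{sufficient-condition}. So there is no proof of the general statement in the paper against which your sketch could be matched; at best your proposal should be compared with the toric verification, which it does not attempt directly.

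Judged as a proof of the conjecture itself, your outline has genuine gaps, some of which you acknowledge but none of which you close. In the direction ``existence $\Rightarrow$ stability'': the identification of $F_1$ with the asymptotic slope of $\mu_X$ along a Bergman or weak geodesic ray attached to a \emph{general} test configuration is asserted, not proved, and is a substantial theorem already in the unweighted case; moreover Proposition \ref{definition-f_1} ($F_1=\tfrac{1}{2}F_0$) holds only for special degenerations. The paper itself shows that for toric degenerations $F_0$ depends on the truncation parameter $R$ while $F_1$ does not, so $F_0\neq F_1$ in general, and the Xiong--Berman nonnegativity of $F_0$ cannot be quoted for arbitrary test configurations without an extra step (normalization or resolution of the central fibre together with semicontinuity of the invariant), which you do not supply. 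In the direction ``stability $\Rightarrow$ existence'': the weighted partial $C^0$ estimate, the soliton analogue of the Donaldson--Sun algebraicity of Gromov--Hausdorff limits, and the compatibility of the limiting $\mathbb C^*$-action with $\sigma_t^X$ are precisely the open core of the problem; labeling them ``the main obstacle'' leaves the implication unestablished rather than proved. These steps were later carried out in the literature by different means (equivariant stability with a smooth continuity method, and variational or Ricci-flow methods), not in this paper, which confines its positive results to the toric case of Theorem \ref{WZZ}. In short, your proposal is a reasonable research program, but it is not a proof, and it should not be read as paralleling any argument the paper contains.
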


In the remaining  sections, we verify  Conjecture \ref{generalized-conjecture}  in case of toric Fano manifolds (also for general conical  K\"ahler-Ricci solitons).

\vskip 20pt

\section{Modified Futaki invariant for toric degenerations}

\vskip 10pt

In this section we compute the modified Futaki invariant $F_1$ for a toric degeneration on a toric Fano manifold $M$.
Let $T=T^n_\mathbb{C}=(\mathbb{C}^*)^n=(S^1)^n\times \mathbb{R}^n$ be torus action on $M$  and denote
$G_0=(S^1)^n$. Choose an $G_0$-invariant K\"ahler metric $g$ with $\omega_g\in 2\pi c_1(M)$.
The open dense orbit of $T_{\mathbb C}^n$ in $M$
induces an global coordinates $(w_1,...,w_n)\in (\mathbb C^*)^n$.
To do the reduction we use the affine logarithmic coordinates
$z_i=\log w_i=\xi_i+\sqrt{-1}\eta_i$.
Then $\omega_g$ is determined by a strictly convex function $\varphi_0$
which depends only on $\xi_1, ... ,\xi_n\in{\mathbb R^n}$
in the coordinates $(z_1, ......, z_n)$, namely
$\omega_g =\sqrt{-1}\partial\bar{\partial}\varphi_0$ on $(\mathbb C^*)^n$.
Since the torus action $T$ is Hamiltonian, there exists a moment map
$m: M\rightarrow \mathfrak t^*$,
where $\mathfrak t^*$ is the dual of the Lie algebra of $T$ which can be identified with $\mathbb R^n$.
By the convexity theorem
the image is a convex polytope in $\mathbb R^n$. Moreover,
the moment map can be given by
$$(m_1, ......, m_n)=\nabla \varphi_0=\left(\frac{\partial \varphi_0}{\partial \xi_1}, ...... , \frac{\partial \varphi_0}{\partial \xi_n}\right).$$
Denote the image by $P=D\varphi_0(\mathbb R^n)$.
Then $P$ is a convex polytope represented by a set of inequalities of the form
(up to translation of coordinates)
\beq\label{polytope}
P=\{x\in \mathbb R^n:\ \langle x, \ell_i\rangle \leq 1, \ i=1, 2, \cdots, d\},
\eeq
where $\ell_i$ is the  outer normal vector  to a face of $P$ and $d$ is the number of faces of $P$.
This polytope is independent of the choice of the metric $g$
in $2\pi c_1(M)$. See \cite{Ab1, Ab2, Gu} for more details.

On the other hand, the soliton vector field $X$ can be written as
$X=\sum_{i=1}^n \theta_iw_i\frac{\partial}{\partial w_{i}}=\sum_{i=1}^n \theta_i\frac{\partial}{\partial z_{i}}$. Let $\theta_X(\omega_g)$ be the potential function determined by
$$i_X\omega_g=\sqrt{-1}\bar\partial\theta_X(\omega_g),$$
then $\theta_X(\omega_g)=X(\varphi_0)+c$ for some $c\in\mathbb R^n$.
By (\ref{potential-vector-normalization}),  it is easy to see
\beq
\int_M\theta_X(\omega_g)e^{h_g}\frac{\omega_g^n}{n!} =0.
\eeq
Note that
$$\int_M\theta_{\frac{\partial}{\partial z_i}}(\omega_g)e^{h_g}\frac{\omega_g^n}{n!} =C\int_{\mathbb R^n}\frac{\partial\varphi}{\partial \xi_i}e^{-\varphi},\  i=1,2, \cdot\cdot\cdot,$$
for a constant $C$. Then $c=0$ under the above normalization, and we have
\beq
\theta_{X}(\omega_g)= \sum_{i=1}^n \theta_i\frac{\partial \varphi_0}{\partial z_{i}}
=\frac{1}{2}\sum_{i=1}^n \theta_i\frac{\partial \varphi_0}{\partial \xi_{i}}=
\sum_{i=1}^n{\theta_{i}x_{i}}:=\theta(x)
\eeq
in the symplectic coordinates. One can see that $\theta(x)$ is also independent of the choice of metric $g$.

Accordimg to  \cite{D1}, a toric degeneration is 
induced by positive rational, piecewise linear functions on  $P$.  
Note that a {\it piecewise linear(PL)} convex function  $u$ on $P$ is of the form
$$u= \text{max}\{u^1, ..., u^r\},$$
 where $u^\lambda=\sum a_i^\lambda x_i + c^\lambda, \lambda=1,...,r,$
for some vectors $(a_1^\lambda, ..., a^\lambda_n)\in \mathbb R^n$ 
and some numbers
$c^\lambda\in \mathbb R$. $u$ is called a {\it rational piecewise linear} convex function if the
coefficients $a^\lambda_i$ and numbers $c^\lambda$ are all rational.

For a  positive rational PL convex function $u$ on $P$,  
we choose a positive integer $R$ so that
$$Q=\{(x,t)\ |\  x\in P,\  0<t<R-u(x)\}$$
is a convex polytope in $\mathbb R^{n+1}$. Without loss of
generality,  we may assume that the coefficients $a^\lambda_i$ 
are integers and  $Q$ is an integral polytope.
Otherwise we replace $u$ by $lu$ and $Q$ by $lQ$ for some integer
$l$, respectively. Then the $n+1$-dimensional polytope $Q$ determines an
$(n+1)$-dimensional toric variety $M_Q$ 
with a holomorphic  line bundle $\mathfrak L\rightarrow M_Q$.  
Note that the face $\bar Q\cap \{\mathbb R^n\times\{0\}\}$
of $Q$ is a copy of the $n$-dimensional polytope $P$,  so we have a 
natural embedding $i: M\rightarrow M_Q$ such that $\mathfrak L|_M=-K_M$. Decomposing 
the torus action $T_{\mathbb C}^{n+1}$ on $M_Q$ as 
$T_{\mathbb C}^n \times \mathbb C^* $ so that
$T_{\mathbb C}^n\times \{\text{Id}\}$ is isomorphic to the torus
action on $M$, we get  $\mathbb C^*$-action $\sigma^u$ by
$\{\text{Id}\}\times{\mathbb C^*}$. Hence, we define an equivariant map
$$\pi: M_Q\rightarrow \mathbb C\mathbb P^1$$
satisfying $\pi^{-1}(\infty)=i(M)$.
One can check that
$\mathcal {W}=M_Q\backslash i(M)$ is  a test configuration
for the pair $M$, called a {\it toric degeneration}.

Let  $kP$ be the  polytope  which corresponds to the bundle $-kK_M$.
Let  $B_{k, P}=\mathbb Z^n\cap k\overline P$   be the lattices set  of $kP$. 
Let $d\sigma=\langle \vec{n},x\rangle \, d\sigma_0 $, where $\vec{n}$ is the unit outer  normal vector field, 
and $d\sigma_0$ is the Lebesgue measure on on $\partial P$.  
We need the following lemma.

\begin{lemma}\label{integral-formula}
Let $\phi$ be a continous function on $\overline P$, then
\beq
\sum_{I\in{B_{k,P}}}\phi(I/k)=k^{n}\int_P\phi \,dx +
\frac{k^{n-1}}{2}\int_{\partial P}\phi \,d\sigma +O(k^{n-2}).
\eeq
In particular, 
$$N_k=k^{n}|P|+
\frac{k^{n-1}}{2}|\partial P| +O(k^{n-2}).$$
Note that $\frac{|\partial P|}{|P|}=n$ if $P$ corresponds to $2\pi c_1(M)$.
\end{lemma}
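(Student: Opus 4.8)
This is a second–order Euler–Maclaurin asymptotic for the weighted lattice sum $\sum_{I\in B_{k,P}}\phi(I/k)$ over the rational polytope $P=\{x:\langle x,\ell_i\rangle\le 1\}$. The plan is to localise by a partition of unity, treat the interior and each facet separately, and identify the boundary contribution. A point I will use crucially is that, because $M$ is a \emph{smooth} toric Fano, $P$ is the anticanonical (reflexive) polytope and each facet normal $\ell_i$ is a \emph{primitive} integer vector; this primitivity is exactly what makes the coefficient of the boundary term equal to $\tfrac12$ and converts the lattice density on $\partial P$ into the weighted measure $d\sigma=\langle\vec n,x\rangle\,d\sigma_0$. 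I assume $\phi$ is smooth on a neighbourhood of $\overline P$, the regularity genuinely needed for the $O(k^{n-2})$ remainder (for merely continuous $\phi$ only the leading term $k^n\int_P\phi$ survives).

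Concretely, I would fix a smooth partition of unity $1=\rho_0+\sum_{i=1}^d\rho_i$ on a neighbourhood of $\overline P$, with $\rho_0$ supported in the interior of $P$ and each $\rho_i$ supported near the relative interior of the facet $F_i$ and away from all faces of codimension $\ge 2$. The lattice points within $O(1/k)$ of the codimension $\ge 2$ faces number $O(k^{n-2})$, so they are harmless. For the interior piece, $\phi\rho_0$ is smooth with compact support inside $P$, so Poisson summation gives $\sum_I(\phi\rho_0)(I/k)=k^n\int_P\phi\rho_0\,dx+O(k^{-N})$ for every $N$. Since $\mathbb Z^n$ has covolume $1$, the leading term of every $\rho_i$–piece is likewise $k^n\int_P\phi\rho_i\,dx$, and as $\rho_0+\sum_i\rho_i\equiv1$ on $P$ these reassemble into $k^n\int_P\phi\,dx$; all that remains is to extract the $k^{n-1}$ correction from the facet pieces.

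The heart of the argument, and the main obstacle, is the facet contribution. Fix $F_i\subset\{\langle x,\ell_i\rangle=1\}$. Since $\ell_i$ is primitive I may choose $p_0\in\mathbb Z^n$ with $\langle p_0,\ell_i\rangle=1$ and put $\Lambda_i=\ell_i^{\perp}\cap\mathbb Z^n$, so that $\mathbb Z^n=\Lambda_i\oplus\mathbb Z p_0$ and $\mathrm{covol}(\Lambda_i)=|\ell_i|$. Summing first along the transverse lattice lines $q+\mathbb Z p_0$, on which the level $\langle\,\cdot\,,\ell_i\rangle$ increases by one per step, the summand $\phi\rho_i$ is smooth, vanishes on the interior end, and its relevant end sits exactly at the integer level $k$, i.e.\ on the facet $kF_i$; the one–dimensional Euler–Maclaurin formula then produces a single clean endpoint contribution $\tfrac12(\phi\rho_i)(I/k)$ at each top–level point, with no fractional–part corrections precisely because the exit occurs at an integer level. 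These top–level points form the coset $kp_0+\Lambda_i$ in the facet hyperplane, hence have Euclidean $(n-1)$–density $1/|\ell_i|$, so summing the endpoint terms over $i$ yields $\tfrac{k^{n-1}}2\sum_i\int_{F_i}\phi\,\tfrac{d\sigma_0}{|\ell_i|}$. Finally $\langle\vec n,x\rangle=\langle x,\ell_i\rangle/|\ell_i|=1/|\ell_i|$ is constant on $F_i$, so $\tfrac{d\sigma_0}{|\ell_i|}=\langle\vec n,x\rangle\,d\sigma_0=d\sigma$ and this correction is exactly $\tfrac{k^{n-1}}2\int_{\partial P}\phi\,d\sigma$. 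The delicate points to get right are that primitivity forces the integer–level exit, hence the exact $\tfrac12$, and that $\mathrm{covol}(\Lambda_i)=|\ell_i|$ is precisely the Jacobian turning lattice density into $d\sigma$.

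For the particular case I take $\phi\equiv1$, giving the Ehrhart asymptotic $N_k=k^n|P|+\tfrac{k^{n-1}}2|\partial P|+O(k^{n-2})$. The divergence theorem then gives $|\partial P|=\int_{\partial P}\langle\vec n,x\rangle\,d\sigma_0=\int_P\mathrm{div}(x)\,dx=n|P|$, so $|\partial P|/|P|=n$ for the $2\pi c_1(M)$ (anticanonical) normalisation, in agreement with the coefficients $A_0=\mathrm{Vol}(M_0)=|P|$ and $B_0=\tfrac n2\mathrm{Vol}(M_0)=\tfrac12|\partial P|$ found earlier.
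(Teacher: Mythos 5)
The decisive step in your argument does not exist as described: the partition of unity you posit is impossible. If $\rho_0$ is supported in the interior of $P$ and every $\rho_i$ is supported away from all faces of codimension $\ge 2$, then near any vertex (or any codimension-$2$ face) all of $\rho_0,\rho_1,\dots,\rho_d$ vanish, so they cannot sum to $1$ on a neighbourhood of $\overline P$. The corner regions cannot be dismissed by the count you invoke: it is true that the lattice points of $k P$ within distance $O(1)$ of the codimension-$\ge 2$ skeleton number $O(k^{n-2})$, but that fact is only usable if your cutoffs live at scale $1/k$ in the $x$-variable, in which case their derivatives are of size $k$ and destroy the Poisson-summation and Euler--Maclaurin remainder estimates on which your interior and facet pieces rely. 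With cutoffs of fixed scale, any corner piece contains $\sim k^{n}$ lattice points and must itself be expanded to precision $O(k^{n-2})$ --- which is exactly the statement being proved. Handling these codimension-$\ge 2$ contributions is the genuine technical content of Euler--Maclaurin formulas on polytopes (rigorous treatments proceed by induction over the faces, as in Karshon--Sternberg--Weitsman, or avoid the analysis entirely). By contrast, your facet mechanism itself is correct and is the right explanation for the measure $d\sigma$: primitivity of $\ell_i$ forces the exit level of each transverse lattice line to be an integer, giving the clean endpoint coefficient $\tfrac12$, and $\mathrm{covol}(\Lambda_i)=|\ell_i|$ turns the lattice density on $F_i$ into $d\sigma_0/|\ell_i|=\langle\vec n,x\rangle\,d\sigma_0=d\sigma$. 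But as organised, the proof has a hole precisely at the corners.

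It is also worth saying how this compares with the paper, whose ``proof'' is of a completely different kind: it cites \cite{D1}, where the expansion is established for convex rational piecewise linear functions by exact lattice-point counting (Ehrhart-type asymptotics for the integral polytopes built over $P$), a route in which the corner combinatorics are absorbed by the counting theorem rather than by analysis, and it then asserts the extension to continuous $\phi$ ``by approximation''. Your regularity caveat is correct and valuable: uniform approximation controls the sums only to order $\epsilon k^{n}$, so approximation can recover only the leading term, and for merely continuous $\phi$ the stated expansion with error $O(k^{n-2})$ is false (already in one dimension the trapezoid-rule error for a continuous function need not be $O(1)$). The lemma as stated is thus too strong; it holds for the functions it is actually applied to, namely products of smooth functions with rational PL functions. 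Note, however, that your smooth-only version does not cover those applications either, since $e^{\theta(x)}(R-u)$ is not smooth across the creases of $u$; one would additionally need to decompose $P$ along the creases into rational subpolytopes and check that the interior facet contributions cancel in pairs because the two outward conormals are opposite. So: right mechanism at the facets, right diagnosis of the regularity issue, but the corner gap must be closed (or the argument replaced by the Ehrhart route of \cite{D1}) before this is a proof.
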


This lemma was proved in \cite{D1} for convex rational PL functions. 
It is easy to see that the formula can be extended to continuous function by approximation arguments.

\begin{proposition} 
Let 
\beqn \label{l-functional} 
\mathcal L(u)=\int_{\partial P}ue^{\theta(x)}  \,d\sigma-\int_P (n+\theta(x))e^{\theta(x)} u\, dx.
\eeqn
Then  for   a toric
degeneration on $M$ induced by a positive rational PL-convex function 
$u$, we have
\beq F_1=\frac{1}{2Vol(P)}\mathcal L(u).
\eeq
\end{proposition}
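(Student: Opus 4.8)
The plan is to reduce the whole computation to the combinatorics of the moment polytope $P$ and then invoke the Euler--Maclaurin--type expansion of Lemma \ref{integral-formula}. By definition $F_1=\frac{2(B-C)-nA}{2A_0}$, where $A,B$ are the two leading coefficients of $S_1$ and $C$ is the leading coefficient of $S_2$, and $A_0=\mathrm{Vol}(M_0)=\mathrm{Vol}(P)$. So everything comes down to writing $S_1$ and $S_2$ explicitly for the toric degeneration and reading off $A$, $B$, $C$.

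First I would identify the two families of weights. On the central fibre, $H^0(M_0,L^k)$ has a $T$-eigenbasis indexed by the lattice points $I\in B_{k,P}=\mathbb Z^n\cap k\overline P$. Since $X=\sum_i\theta_i\partial_{z_i}$ and $\theta(x)=\sum_i\theta_i x_i=\langle\vec\theta,x\rangle$, the soliton action $\sigma_1^X$ acts on the basis vector attached to $I$ with weight $X_\alpha^k=\langle\vec\theta,I\rangle=k\,\theta(I/k)$, so $X_\alpha^k/k=\theta(I/k)$. The $\mathbb C^*$-action $\sigma^v$ of the toric degeneration comes from the extra factor in $T^{n+1}_{\mathbb C}=T^n_{\mathbb C}\times\mathbb C^*$ acting on $M_Q$; reading its weight off the polytope $Q=\{(x,t):x\in P,\ 0<t<R-u(x)\}$ gives $v_\alpha^k=k\,u(I/k)$, up to an affine shift coming from the $R-u$ in the definition of $Q$. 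Substituting into the definitions of $S_1,S_2$ yields
\beq
S_1=k\sum_{I\in B_{k,P}}e^{\theta(I/k)}\,u(I/k),\qquad
S_2=\tfrac12\sum_{I\in B_{k,P}}e^{\theta(I/k)}\,\theta(I/k)\,u(I/k).
\eeq

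Next I would apply Lemma \ref{integral-formula} to each sum. Taking $\phi=e^{\theta}u$ in $S_1$ gives $S_1=k^{n+1}\int_P e^{\theta}u\,dx+\frac{k^n}{2}\int_{\partial P}e^{\theta}u\,d\sigma+O(k^{n-1})$, hence $A=\int_P e^{\theta}u\,dx$ and $B=\frac12\int_{\partial P}e^{\theta}u\,d\sigma$. Taking $\phi=e^{\theta}\theta u$ in $S_2$ and keeping only the top-order term gives $C=\frac12\int_P e^{\theta}\theta u\,dx$. Plugging these into $F_1=\frac{2(B-C)-nA}{2A_0}$ with $A_0=\mathrm{Vol}(P)$ collapses the numerator to
\beq
2(B-C)-nA=\int_{\partial P}u\,e^{\theta}\,d\sigma-\int_P(n+\theta)e^{\theta}u\,dx=\mathcal L(u),
\eeq
which is exactly the claimed formula.

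The main obstacle is the weight identification in the first step, since the final answer hinges on pinning down how $\sigma^v$ acts on the central-fibre sections in terms of $u$; here I would rely on Donaldson's description of the toric degeneration and of the weights attached to the lattice points of $Q$. A subtlety is the affine ambiguity: $v_\alpha^k$ is only determined up to replacing $u$ by $u+\mathrm{const}$. To see this does not affect $F_1$, I would verify $\mathcal L(1)=0$ by the divergence theorem: for the anticanonical measure $d\sigma=\langle\vec n,x\rangle\,d\sigma_0$ one has $\int_{\partial P}f\,d\sigma=\int_P\big(nf+\langle x,\nabla f\rangle\big)\,dx$, and taking $f=e^{\theta}$ (so $\langle x,\nabla f\rangle=\theta(x)e^{\theta}$) yields $\int_{\partial P}e^{\theta}\,d\sigma=\int_P(n+\theta)e^{\theta}\,dx$, i.e. $\mathcal L(1)=0$. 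This both legitimizes the normalization of the degeneration weight and confirms that only the top two coefficients of $S_1$ and the top coefficient of $S_2$ enter $F_1$.
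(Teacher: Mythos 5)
Your overall strategy is the same as the paper's: identify the torus weights on $H^0(M_0,\mathfrak L^k)$ through the lattice points $B_{k,P}$, apply the Euler--Maclaurin expansion of Lemma~\ref{integral-formula}, read off the coefficients $A,B,C$, and use the integration-by-parts identity $\int_{\partial P}e^{\theta(x)}\,d\sigma=\int_P(n+\theta(x))e^{\theta(x)}\,dx$ (your $\mathcal L(1)=0$) to dispose of the normalization constant $R$. The paper does exactly this, keeping $R$ explicit throughout and cancelling it at the end by the same identity.

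However, there is a genuine gap in the step you yourself single out as the hinge: the weight identification. Donaldson's construction, together with the exact-sequence argument the paper quotes from \cite{ZZ1}, shows that the section of $\mathfrak L^k$ surviving on the central fibre over a lattice point $I$ is $S_{I,\,k(R-u)(I/k)}$, on which $\sigma^u$ acts with weight $k(R-u)(I/k)$. This is \emph{not} $k\,u(I/k)$ up to an affine shift: it equals $-k\,u(I/k)+kR$, a sign reversal composed with a shift. Your normalization argument $\mathcal L(1)=0$ handles additive constants but cannot handle the sign, and the sign is the entire content of the proposition, since $F_1\ge 0$ for convex $u$ (via Lemma~\ref{stable}) is what yields modified K-stability. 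Concretely: with the correct weights $k(R-u)(I/k)$, the formula $F_1=\frac{2(B-C)-nA}{2A_0}$ that you use yields $\frac{1}{2|P|}\mathcal L(R-u)=-\frac{1}{2|P|}\mathcal L(u)$, the negative of the claim. The paper lands on the stated sign because it pairs the $(R-u)$ weights with the expansion (\ref{expansion}) of $-\frac{S_1-S_2}{kN_k}$, i.e.\ with $F_1=\frac{nA-2(B-C)}{2A_0}$; note this has the opposite sign to the formula $\frac{2(B-C)-nA}{2A_0}$ quoted just after (\ref{expansion}) (the paper's two statements are themselves inconsistent by a sign, evidently a typo). You adopted the latter version of $F_1$ and simultaneously flipped the weights, so your two sign choices cancel and you reach the stated answer, but as a logical chain the argument is not sound. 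To repair it: take the weights to be $k(R-u)(I/k)$, justified by the exact sequence as in the paper, and carry the minus sign of (\ref{expansion}) through consistently; the $R$-terms then drop out by your $\mathcal L(1)=0$ identity exactly as you argued.
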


\begin{proof} 
We consider the space $H^0(\mathcal {W}, \mathfrak {L}^k)$
of holomorphic sections over $\mathcal {W}$. It is well-known that  
$H^0(\mathcal {W}, \mathfrak {L}^k)$ has a basis
$\{S_{I,i}\}$, where $I$ is a lattice in $B_{k,P}$ and 
$0\leq i\leq k(R-u)(I/k)$. By using the exact sequence for large $k$,
$$0\longrightarrow H^0( \mathcal {W}, \mathfrak {L}^k\otimes \pi^*(\vartheta(-1)))
\longrightarrow H^0( \mathcal {W}, \mathfrak {L}^k)
\longrightarrow H^0(M_0, \mathfrak {L}^k) \longrightarrow 0,$$
 $H^0(M_0, \mathfrak L^k)$ has a basis $\{S_{I,k(R-u)(I/k)}|_{M_0}\}_{I\in{B_{k,P}}}$.
According to \cite{ZZ1}, the action $\sigma^X$ induced by $X$ acts on
$S_{I,k(R-u)(\frac{I}{k})}$ with weight $k \theta(I/k)$. 
The action $\sigma^u$ induced by $u$ acts on $S_{I,k(R-u)(\frac{I}{k})}$ with weight
$k(R-u)(I/k)$. Then by Lemma \ref{integral-formula}, it is easy to see that
\beqs
S_1&=& \sum_{I\in{B_{k,P}}}e^{\theta(I/k)} k(R-u)(I/k)\\[7pt]
&=& \int_P e^{\theta(x)}(R-u)\, dx -
\frac{k^{n}}{2}\int_{\partial P}e^{\theta(x)}(R-u)\, d\sigma +O(k^{n-1})
+\frac{1}{2}\left[k^n\int_P e^{\theta(x)}(R-u) \cdot \theta(x) \,dx\right],\\[7pt]
S_2&=&\sum_{I\in{B_{k,P}}}e^{\theta(I/k)} \theta(I/k)\cdot (R-u)(I/k)\\[7pt]
&=&\frac{1}{2}\left[k^n\int_P e^{\theta(x)}(R-u) \cdot \theta(x) \,dx -
\frac{k^{n-1}}{2}\int_{\partial P}e^{\theta(x)}(R-u)\cdot \theta(x) \,d\sigma +O(k^{n-2})\right].
\eeqs
Then 
\beqs
&&-(S_1-S_2)\\
&&=-k^{n+1}\int_P e^{\theta(x)}(R-u)\, dx-\frac{k^n}{2}\left[\int_{\partial P}e^{\theta(x)}(R-u) \,d\sigma-\int_P e^{\langle v,x\rangle}(R-u) \cdot \theta(x) \,dx \right]+O(k^{n-1}).
\eeqs
We have
$$\frac{-(S_1-S_2)}{kN_k}=  F_{0}+ F_{1} k^{-1}+\cdot\cdot\cdot,$$
where
\beqn F_{0}&=&-\frac{1}{|P|}\int_P e^{\theta(x)} (R-u) \,dx,\nonumber\\[4pt]
 F_{1}&=&\frac{1}{2|P|}\left[\int_{\partial P}e^{\theta(x)} (R-u)\,d\sigma-\int_P e^{\theta(x)}(R-u) \cdot \theta(x) \,dx-\frac{|\partial P|}{|P|}\int_P e^{\theta(x)} (R-u)\, dx\right]\nonumber\\[4pt]
\label{F_1}&=&\frac{R}{2|P|}\left[\int_{\partial P}e^{\theta(x)} \,d\sigma
-\int_P  (n+\theta(x))e^{\theta(x)} \,dx\right]
-\frac{1}{2|P|}\left[\int_{\partial P}e^{\theta(x)} u\,d\sigma
-\int_P (n+\theta(x)) e^{\theta(x)} u\,dx\right].
\eeqn
Integrating by parts, we have
\beqs
\int_P e^{\theta(x)}\, dx
&=&\frac{1}{n}\int_{\partial P} e^{\theta(x)}\,d\sigma
-\frac{1}{n}\int_P\theta(x) e^{\theta(x)}\,dx.
\eeqs
Therefore, the coefficient of $R$ in (\ref{F_1}) vanishes, and we have
\beq
 F_{1}=\frac{1}{2|P|}\left[\int_{\partial P}e^{\theta(x)} u\, d\sigma-\int_P (n+\theta(x))e^{\theta(x)} u\, dx\right].
\eeq
\end{proof}

\begin{remark}
As can be seen in the above lemma, the weights of the action
depend on $R$ and $F_0$ also depends on the integer $R$. But $F_1$ is independent of $R$. In particular, $F_0$ is different to $F_1$.
\end{remark}

Since $X$ is the soliton vector field, $\mathcal L(u)=0$ for any linear function $u$.
This implies that $\mathcal L(u)$ is invariant when adding $u$ by a linear function. We call a convex function is normalized at $0\in P$ if $\inf_{P} u=u(0)$.
Let $\mathcal C_\infty$ be the set of smooth convex functions on $\overline P$ and
$\tilde{\mathcal C}_\infty$ be the set of smooth convex functions normalized at $0\in P$.
It is clear that the PL functions can be approximated uniformly by functions in $\mathcal C_\infty$.

\begin{lemma} \label{stable}
There exists a $\lambda>0$ such that
\beq\label{cond1}
\mathcal L(u)\geq \lambda\int_{\partial P}ue^{\theta(x)}\, d\sigma,\ \ u\in \tilde{\mathcal C}_\infty.
\eeq
\end{lemma}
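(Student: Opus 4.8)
The plan is to convert $\mathcal L$ into a single volume integral with a manifestly nonnegative integrand, and then run a fibrewise one-dimensional estimate in polar coordinates centred at the interior point $0\in P$. First I would integrate by parts. Since $\theta(x)=\langle\theta,x\rangle$ is linear we have the pointwise identity $(n+\theta(x))e^{\theta(x)}=\operatorname{div}\!\big(x\,e^{\theta(x)}\big)$, and combined with $d\sigma=\langle\vec n,x\rangle\,d\sigma_0$ the divergence theorem gives
\[
\mathcal L(u)=\int_{\partial P}u\,e^{\theta}\,d\sigma-\int_P u\,(n+\theta)e^{\theta}\,dx=\int_P \langle \nabla u(x),x\rangle\, e^{\theta(x)}\,dx .
\]
Because $u$ is normalized at the interior point $0$ (so we may take $u\ge 0$, $u(0)=0$, whence $\nabla u(0)=0$), convexity forces $\langle\nabla u(x),x\rangle\ge 0$, and $\mathcal L(u)\ge 0$ follows at once; the entire content of the lemma is the quantitative lower bound by the boundary seminorm.

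Next I would pass to polar coordinates $x=r\omega$, $\omega\in S^{n-1}$, with $\rho(\omega)$ the radial distance to $\partial P$ and $\beta(\omega)=\langle\theta,\omega\rangle$. Setting $g(r)=u(r\omega)$ (convex, with $g(0)=g'(0)=0$ and $g'\ge 0$) and invoking the cone-measure identity $\int_{\partial P}f\langle\vec n,x\rangle\, d\sigma_0=\int_{S^{n-1}}f(\rho\omega)\rho^n\,d\omega$, both sides fibre over $S^{n-1}$:
\[
\mathcal L(u)=\int_{S^{n-1}}\!\!\int_0^{\rho}g'(r)\,\psi(r)\,dr\,d\omega,\qquad \int_{\partial P}u\,e^{\theta}d\sigma=\int_{S^{n-1}} g(\rho)\,\psi(\rho)\,d\omega,
\]
with $\psi(r)=e^{\beta r}r^n$. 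It then suffices to prove a one-dimensional inequality $\int_0^{\rho}g'\psi\,dr\ge \lambda\, g(\rho)\psi(\rho)$ with $\lambda$ uniform over the compact parameter range $\rho\in[\rho_-,\rho_+]$, $\beta\in[-\|\theta\|,\|\theta\|]$ (a genuinely bounded range since $0\in\operatorname{int}P$). I would establish this by representing the nondecreasing $g'$ as a positive mixture of the hinge derivatives $\tfrac{1}{\rho-a}\mathbf 1_{[a,\rho]}$ with total mass $g(\rho)$, which reduces the claim to the elementary bound $\tfrac{1}{\rho-a}\int_a^{\rho}\psi\,dr\ge \lambda\,\psi(\rho)$ for every $a\in[0,\rho)$.

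The quantity $c(\omega):=\min_{a\in[0,\rho)}\tfrac{1}{\rho-a}\int_a^{\rho}\psi\,dr$ is where the one real subtlety lives: $\psi$ need \emph{not} be monotone, namely when $\beta<0$ and $\rho>-n/\beta$, which is precisely the regime in which $n+\theta(x)$ changes sign. Hence one cannot argue that the average is smallest at $a=0$. The resolution is that monotonicity is irrelevant here: $\tfrac{1}{\rho-a}\int_a^{\rho}\psi$ is the average of a strictly positive continuous function over an interval of positive length, so it is strictly positive for each $a\in[0,\rho)$ and extends continuously to $a\to\rho^-$ with limiting value $\psi(\rho)>0$; therefore $c(\omega)>0$. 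Since $c(\omega)/\psi(\rho(\omega))$ is continuous and strictly positive on the compact sphere $S^{n-1}$, the constant $\lambda:=\inf_{\omega}c(\omega)/\psi(\rho(\omega))$ is positive, and integrating the fibrewise inequality over $\omega$ delivers $\mathcal L(u)\ge \lambda\int_{\partial P}u\,e^{\theta}d\sigma$. I expect this sign-change of $n+\theta$ to be the main obstacle to watch, since it blocks the naive ``average is minimized at the centre'' shortcut; the extreme-point reduction is what sidesteps it cleanly. The remaining points, the boundary integration by parts (legitimate for $u\in\mathcal C_\infty$) and the mixture representation of $g'$, are routine.
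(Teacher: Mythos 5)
Your proof is correct, but it takes a genuinely different route from the paper's. Both arguments begin with the same identity: integrating by parts against $\mbox{div}\,(x\,e^{\theta(x)})=(n+\theta(x))e^{\theta(x)}$ turns $\mathcal L(u)$ into $\int_P\langle\nabla u,x\rangle e^{\theta(x)}\,dx$ (the paper records this as (\ref{inq2}), writing the integrand as $(\sum x_iu_i-u)+u$), and both read the normalization the same way ($u\ge 0$, $u(0)=0$, hence $\nabla u(0)=0$); note this reading is forced rather than a free choice, since $\mathcal L$ annihilates constants while the boundary seminorm does not, so your phrase ``we may take'' should really be ``the hypothesis means''. From there the paths diverge. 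The paper argues by contradiction and compactness: it normalizes $\int_{\partial P}u_k e^{\theta}\,d\sigma=1$ with $\mathcal L(u_k)\to 0$, extracts a locally uniformly convergent subsequence $u_k\to u_\infty$, uses $\mathcal L(u)\ge\int_P u\,e^{\theta}\,dx$ to force $u_\infty\equiv 0$, and then computes $\mathcal L(u_k)\to 1$, a contradiction. You instead give a direct, quantitative proof: polar coordinates at $0$, the cone-measure identity for $d\sigma$, and the extreme-point (mixture) decomposition of the nondecreasing derivative $g'$ reduce the claim to the elementary bound $\frac{1}{\rho-a}\int_a^{\rho}\psi\,dr\ge\lambda\,\psi(\rho)$, whose uniform positivity follows from continuity and positivity on a compact parameter set. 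What each buys: the paper's argument is shorter, but it leans on the (standard, Donaldson-style, and not spelled out there) compactness of convex functions with normalized boundary integral and produces no explicit constant; yours is self-contained and elementary, yields in principle an explicit $\lambda$ as the minimum of an explicit function over a compact set, and correctly handles the possible non-monotonicity of $\psi$ --- equivalently the possible sign change of $n+\theta(x)$ on $P$ --- which is precisely where a naive averaging shortcut would fail.
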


\begin{proof}
We  note that  $\mathcal L(u)$  can be rewritten as
\beq\label{inq2}
\mathcal L(u)=\int_P [(\sum x_iu_i-u)+u ]e^{\theta(x)}\, dx\geq \int_P u e^{\theta(x)}\, dx.
\eeq
By the contradiction,  we suppose that (\ref{cond1}) is not true.  Then
there is a sequence of functions $\{u_{k}\}$ in
$\tilde{\mathcal C}_\infty$ such that
\beq\label{nor}
\int_{\partial P} u_{k} e^{\theta(x)}\,d\sigma=1
\eeq
and
\beq\label{conver}
\mathcal L(u_k)\longrightarrow 0, \text{as} \  \ k\longrightarrow\infty.
\eeq
By (\ref{nor}), there exists a subsequence (still denoted by $\{u_k\}$) of $\{u_k\}$, which converges locally uniformly to a convex function $u_\infty\ge 0$ on $P$.
By (\ref{inq2}) and (\ref{conver}), we have
$$\int_P u_k e^{\theta(x)}\,dx\leq\mathcal L(u_k)\longrightarrow 0.$$
It follows
$$\int_{P} u_\infty e^{\theta(x)}\, dx=0.$$
Hence, we obtain $u_\infty\equiv 0$ in $P$.
On the other hand,
\beqs
\mathcal L(u_k)&=&\int_{\partial P} u^ke^{\theta(x)}d\sigma
- \int_{P} (n + \sum {x_{i}\theta_{i}})u_k e^{\theta(x)}\,dx \\[5pt]
&\longrightarrow& 1-\int_{P} (n + \sum {x_{i}\theta_{i}})u_\infty e^{\theta(x)}\,dx=1>0.
\eeqs
This contradicts with (\ref{conver}). The lemma is proved.
\end{proof}

By Lemma  \ref {stable}, we immediately get

\begin{theorem}
Any toric Fano manifold is modified $K$-stable for toric degenerations.
\end{theorem}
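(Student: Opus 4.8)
The plan is to read off the theorem directly from the formula $F_1=\frac{1}{2{\rm Vol}(P)}\mathcal L(u)$ established in the preceding Proposition together with the coercivity estimate of Lemma \ref{stable}. First I would recall that a toric degeneration is encoded by a positive rational PL-convex function $u$ on $P$, so that its modified Futaki invariant is exactly $F_1=\frac{1}{2{\rm Vol}(P)}\mathcal L(u)$. Since $X$ is the soliton vector field, $\mathcal L$ annihilates every affine function, hence $\mathcal L(u)$ is unchanged if $u$ is replaced by $u-\ell$ for any affine $\ell$. I would use this freedom to assume $u$ is normalized at $0\in P$, i.e. $\inf_P u=u(0)$, and, after subtracting the constant $u(0)$, that $u\ge 0$ on $P$ with $u(0)=0$.

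Second, I would extend Lemma \ref{stable}, which is stated for smooth convex functions in $\tilde{\mathcal C}_\infty$, to the PL-convex function $u$ at hand. The PL functions are uniformly approximable by functions in $\mathcal C_\infty$, and each approximant can be re-normalized at $0$ by subtracting its tangent affine function (which does not change $\mathcal L$), placing it in $\tilde{\mathcal C}_\infty$. Because both sides of the estimate (\ref{cond1}) are continuous under uniform convergence on $\overline P$, the inequality $\mathcal L(u)\ge \lambda\int_{\partial P}u\,e^{\theta(x)}\,d\sigma$ survives in the limit. Combined with $u\ge 0$ and $e^{\theta(x)}>0$, this gives
\[
F_1=\frac{1}{2{\rm Vol}(P)}\mathcal L(u)\ge \frac{\lambda}{2{\rm Vol}(P)}\int_{\partial P}u\,e^{\theta(x)}\,d\sigma\ge 0,
\]
which is precisely modified $K$-semistability.

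Finally, for the sharpness needed in the definition of modified $K$-stability, I would analyze the equality case. If $F_1=0$ then $\mathcal L(u)=0$, and the chain of inequalities above forces $\int_{\partial P}u\,e^{\theta(x)}\,d\sigma=0$; since $u\ge 0$ and the weight is strictly positive, $u\equiv 0$ on $\partial P$. A convex function attains its maximum on the boundary of a convex polytope, so $u\le 0$ in the interior as well, whence $u\equiv 0$ on $\overline P$. Unwinding the normalization, the original $u$ is affine, which is exactly the PL datum defining a trivial toric degeneration. Therefore $F_1=0$ occurs only for the trivial configuration, establishing modified $K$-stability.

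I expect the only real obstacle to be the passage from the smooth class $\tilde{\mathcal C}_\infty$ to the PL-convex function defining the degeneration: one must ensure the uniform approximation can be carried out inside the normalized class so that Lemma \ref{stable} applies to the approximants, and that the boundary integral $\int_{\partial P}u\,e^{\theta(x)}\,d\sigma$ passes correctly to the limit. The remaining ingredients—affine invariance of $\mathcal L$ and the maximum-principle argument for convex functions in the equality case—are routine.
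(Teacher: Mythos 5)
Your proposal is correct and follows essentially the same route as the paper: the paper deduces the theorem immediately from the formula $F_1=\frac{1}{2\mathrm{Vol}(P)}\mathcal L(u)$ of the preceding Proposition together with the coercivity estimate of Lemma \ref{stable}, exactly the two ingredients you combine. The details you supply --- affine invariance of $\mathcal L$ for the soliton vector field, normalization at $0\in P$, uniform approximation of PL functions by smooth convex ones, and the maximum-principle argument identifying the equality case with affine $u$ (hence a trivial degeneration) --- are precisely what the paper's one-line proof leaves implicit.
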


\vskip 20pt

\section{Modified $K$-energy on a toric Fano manifold}

\vskip 10pt

Let  $K_X$  be a one parameter compact subgroup generated by  the image part $\text{Im}(X)$ and denote by  $\mathcal H_X(\omega_g)$ a set of $K_X$-invariant K\"ahler potentials.
In the  study  of K\"ahler-Ricci solitons,  the modified Mabuchi's $K$-energy   $\mu_{\omega_{g}}(\phi)$  plays an important role \cite{TZ2, CTZ}, where
\beqs
\mu_{\omega_{g}}(\phi)
= - \frac{1}{V}\int_{0}^{1}\int_{M}
\dot\phi_{t}[S(\phi_{t}) - n -
tr_{\omega_{\phi_{t}}}(\nabla_{\omega_{\phi_{t}}}X) - X(h_{\omega_{\phi_{t}}} -
\theta_{X}(\omega_{\phi_{t}}))]e^{\theta_{X}(\omega_{\phi_{t}})}\frac{\omega_{\phi_{t}}^n}{n!}\wedge
dt.
\eeqs
Here $\phi \in \mathcal H_X(\omega_{g})$ and $g$ is chosen to be  $K_X$-invariant.
Recall  two Aubin typed functionals  introduced in \cite{Z1},
\beqs
I_{\omega_{g}}(\phi)&=&\frac
{1}{V}\int_{M}\phi(e^{\theta_{X}(\omega_g)}\frac{\omega_g^n}{n!} -
e^{\theta_{X}(\omega_\phi)} \frac{\omega_{\phi}^n}{n!}),\\
J_{\omega_{g}}(\phi) &=& \frac{1}{V}\int_{0}^{1}\int_{M}
\dot\phi_{s}(e^{\theta_{X}(\omega_g)}\frac{\omega_g^n}{n!}  -e^{\theta_{X}(\omega_{\phi_{s}})}\frac{\omega_{\phi_{s}}^n}{n!})\wedge ds,
\eeqs
where $\omega_\phi=\omega_g+\sqrt{-1}\partial\bar\partial \phi$ and
$\phi_s$ is a path in $\mathcal H_X(\omega_g)$. It is known that
\beq\label{ij}
c_1I_{\omega_g}(\phi)\geq J_{\omega_g}(\phi)\geq c_2 I_{\omega_g}(\phi),
\ \forall \ \phi\in\mathcal H_X(\omega_g),
\eeq
for two positive constants $c_1$, $c_2$.
Then $\mu_{\omega_{g}}(\cdot)$ can also be written as
\beqn\label{kenergy}
\mu_{\omega_{g}}(\phi)  &=& - \frac{1}{V}\int_{M}
\log\left(\frac{e^{\theta_{X}(\omega_\phi)}\omega_{\phi}^{n}}{e^{\theta_{X}(\omega_g)}\omega_{g}^{n}}\right)
e^{\theta_{X}(\omega_\phi)}\frac{\omega_{\phi}^{n}}{n!} - (I_{\omega_{g}}(\phi) -
J_{\omega_{g}}(\phi)) \nonumber\\
&& + \frac{1}{V}\int_{M}(h_{g} -
\theta_{X}(\omega_g))(e^{\theta_{X}(\omega_g)}\frac{\omega_g^n}{n!}  -
e^{\theta_{X}(\omega_\phi)}\frac{\omega_{\phi}^{n}}{n!}).
\eeqn

\begin {definition} \label{proper-definition}
Let $(M,g)$ be a  Fano manifold $M$.
Let  $G$ be a reductive  subgroup of  automorphisms group ${\rm Aut}(M)$ which contains $K_X$.
We call  $\mu_{\omega_g}(\phi)$ proper  modulo $G$
if there is a continuous function $p(t)$ in $\mathbb R$ with the property
$$ \lim_{t\to+\infty} p(t)=+\infty,$$
such that
\beq\label{properdef}
\mu_{\omega_g}(\phi)\ge \inf_{\sigma\in G} p(I_{\omega_g}(\phi_{\sigma})),
\eeq
where $\phi_{\sigma}$ is defined by
$$\omega_g+\sqrt{-1}\partial\bar{\partial}\phi_{\sigma}
=\sigma^*(\omega_g+\sqrt{-1}\partial\bar{\partial}\phi).$$
\end{definition}

The  properness of   $\mu_{\omega_g}(\phi)$ is a sufficient  condition for the existence of K\"ahler-Ricci solitons due to the following lemma.

\begin{lemma}\label{sufficient-condition}
 Suppose that $\mu_{\omega_g}(\phi)$  is proper  modulo   a reductive  subgroup $G$ of  automorphisms group ${\rm Aut}(M)$ which contains $K_X$. Then
$M$ admits a  K\"ahler-Ricci soliton.
\end{lemma}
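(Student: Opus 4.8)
\emph{The plan is to run the classical continuity method for the modified Monge--Amp\`ere equation, using the properness hypothesis solely to produce the zeroth order a priori estimate.} Working inside the class $\mathcal{H}_X(\omega_g)$ of $K_X$-invariant potentials, I would consider the family
\[
(\omega_g+\sqrt{-1}\partial\bar\partial\phi_t)^n=e^{h_g-\theta_X(\omega_{\phi_t})-t\phi_t}\,\omega_g^n,\qquad t\in[0,1],
\]
equivalently ${\rm Ric}(\omega_{\phi_t})-\sqrt{-1}\partial\bar\partial\theta_X(\omega_{\phi_t})=t\,\omega_{\phi_t}+(1-t)\,\omega_g$, whose solution at $t=1$ is exactly a K\"ahler--Ricci soliton for $X$. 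Let $S\subseteq[0,1]$ be the set of $t$ admitting a $K_X$-invariant solution. The endpoint $t=0$ is the modified Calabi--Yau equation and lies in $S$ by \cite{Z1,TZ2}; the goal is $1\in S$.

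For openness I would linearize. Since $\theta_X(\omega_\phi)=\theta_X(\omega_g)+X(\phi)$ up to the normalizing constant, the linearization in the direction $\psi$ is $(\Delta_{\omega_{\phi_t}}+X+t)\psi$, where $\Delta_{\omega_{\phi_t}}+X$ is the drift Laplacian, self-adjoint with respect to the weighted measure $e^{\theta_X}\omega_{\phi_t}^n$. The algebraic input is the weighted eigenvalue estimate: the first nonzero eigenvalue of $-(\Delta_{\omega_{\phi_t}}+X)$ is $\ge 1$ on $K_X$-invariant functions (the modified Lichnerowicz--Matsushima inequality, \cite{TZ2}). Hence for $t\in(0,1)$ the number $t$ is not an eigenvalue, the linearized operator is invertible, and the implicit function theorem gives openness of $S$ in $[0,1)$. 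Note that the kernel reappears at $t=1$ precisely as the potentials of holomorphic vector fields, which is the obstruction absorbed by $G$ below.

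The crux is the a priori estimate uniform up to $t=1$, and this is where the hypothesis is used. First I would check that the modified $K$-energy is bounded above along the path, $\mu_{\omega_g}(\phi_t)\le C$, which follows from the monotonicity of the associated energy functionals along the continuity method (cf.\ \cite{TZ2,CTZ}). Feeding this into the properness inequality (\ref{properdef}) gives $\inf_{\sigma\in G}p(I_{\omega_g}(\phi_{t,\sigma}))\le C$, and since $p\to+\infty$ there are $\sigma_t\in G$ with $I_{\omega_g}(\phi_{t,\sigma_t})\le C'$ uniformly in $t$. The twisted metrics $\tilde\omega_t:=\sigma_t^*\omega_{\phi_t}$ thus have uniformly bounded $I_{\omega_g}$, hence bounded $J_{\omega_g}$ by (\ref{ij}); Tian's zeroth order estimate for the complex Monge--Amp\`ere equation upgrades this to a uniform $C^0$ bound on their potentials, and Yau's second order estimate, Evans--Krylov and Schauder bootstrapping give uniform $C^\infty$ bounds on $\tilde\omega_t$. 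In particular $S\cap[0,1)$ is closed in $[0,1)$, so $[0,1)\subseteq S$. \textbf{This step is the main obstacle}: because $G$ is noncompact the untwisted $\phi_t$ need not be bounded at all, and the entire force of the hypothesis is to select $\sigma_t$ so that $\tilde\omega_t$ has uniformly controlled geometry while carefully handling the drift of the background form $\sigma_t^*\omega_g$; the technical execution is as in \cite{CTZ,TZ2}.

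Granting the uniform $C^\infty$ bounds, I would extract a subsequential limit $\tilde\omega_\infty\in 2\pi c_1(M)$ as $t\to 1^-$. Applying $\sigma_t^*$ to the continuity equation and commuting $\sigma_t$ with the $X$-action gives ${\rm Ric}(\tilde\omega_t)-\sqrt{-1}\partial\bar\partial\theta_X(\tilde\omega_t)=t\,\tilde\omega_t+(1-t)\,\sigma_t^*\omega_g$, so in the limit
\[
E:={\rm Ric}(\tilde\omega_\infty)-\sqrt{-1}\partial\bar\partial\theta_X(\tilde\omega_\infty)-\tilde\omega_\infty=\lim_{t\to1}(1-t)\,\sigma_t^*\omega_g.
\]
Now $E$ is a closed nonnegative $(1,1)$-form (a limit of the positive forms $(1-t)\sigma_t^*\omega_g$) whose cohomology class is $\lim_{t\to1}(1-t)\,2\pi c_1(M)=0$; since $E\wedge\tilde\omega_\infty^{\,n-1}\ge 0$ pointwise while $\int_M E\wedge\tilde\omega_\infty^{\,n-1}=0$, we conclude $E\equiv 0$. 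Hence $\tilde\omega_\infty$ solves ${\rm Ric}(\tilde\omega_\infty)-\tilde\omega_\infty=\sqrt{-1}\partial\bar\partial\theta_X(\tilde\omega_\infty)$, i.e.\ $\tilde\omega_\infty$ is a K\"ahler--Ricci soliton with respect to $X$, proving the lemma.
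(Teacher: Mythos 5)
Your proposal is correct and follows essentially the same route as the paper: the paper gives no argument of its own for Lemma \ref{sufficient-condition}, stating only that it follows either from the K\"ahler--Ricci flow as in \cite{TZ3, Z2, BB} or from the continuity method as in \cite{CTZ, T2}, and your sketch is precisely that continuity-method proof (same path, same use of properness together with monotonicity of the modified $K$-energy for the zeroth order estimate, same deferral of the twisted a priori estimates to \cite{CTZ, TZ2}). One correction: along the path the weighted Bochner formula and ${\rm Ric}(\omega_{\phi_t})-\sqrt{-1}\partial\bar\partial\theta_X(\omega_{\phi_t})=t\,\omega_{\phi_t}+(1-t)\,\omega_g>t\,\omega_{\phi_t}$ give $\lambda_1\bigl(-(\Delta_{\omega_{\phi_t}}+X)\bigr)>t$ rather than $\ge 1$ (the bound $\ge 1$ is the statement at a soliton itself), and this weaker bound is exactly what makes $\Delta_{\omega_{\phi_t}}+X+t$ invertible for $t\in(0,1)$, so your openness conclusion stands.
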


Lemma \ref{sufficient-condition}  was   proved by   using  K\"ahler-Ricci flow as in  \cite{TZ3, Z2, BB} and can be   also  proved by uisng the continuity  method as in \cite {CTZ,T2}.

Let $P_{g}=\triangle_g+X(\cdot)$ be a linear elliptic operator defined on
the space $$\mathcal N_X=\{u\in C^\infty(M)| ~{\rm  Im}(X(u))=0\},$$
associated to $\omega_g$ and a holomorphic on $M$.
$P_{g}$ is a self-adjoint elliptic operator on $\mathcal N_X$ with respect to the inner product,
$$(\phi,\psi)=\int_{M}\phi\psi e^{\theta_X(\omega_g)}\frac{\omega_g^n}{n!} .$$
The following lemma  shows  that the properness given in Definition \ref{proper-definition} coincides with one  as  defined  in \cite{T1, CTZ}, when $g$ is a K\"ahler-Ricci soliton.

\begin{lemma}
Suppose that $M$ admits a  K\"ahler-Ricci soliton  $g_{KS}$.
 Then the  modified $K$-energy $\mu_{\omega_{KS}}(\phi)$
is proper with respect to $X$ modulo ${\rm Aut}^0(M)$
iff there is a continuous function $\bar p(t)$ in $\mathbb R$ with
the property
\beqs
\lim_{t\to+\infty} \bar p(t)=+\infty
\eeqs
such that
\beq\label{properdef1}
\mu_{\omega_{KS}}(\phi) \geq \bar p(I_{\omega_{KS}}(\phi)), \ \ \forall  \ \phi\in\Lambda_1^{\bot}(M,g_{KS}),
\eeq
 where  $\Lambda_1(M,g_{KS})$ denotes the first non-zero eigenfunctions space
for the operator $P_{g_{KS}}$ associated to the metric $g_{KS}$, i.e.,
$\Lambda_1(M,g_{KS})={\rm Ker} (P_{g_{KS}}+I)$.
\end{lemma}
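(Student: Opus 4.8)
The plan is to identify the eigenspace $\Lambda_1(M,g_{KS})={\rm Ker}(P_{g_{KS}}+I)$ with the infinitesimal directions of the ${\rm Aut}^0(M)$-action on $K_X$-invariant potentials, and thereby reduce the properness estimate over an entire orbit to its minimizing point, which will be shown to lie in $\Lambda_1^{\bot}$. The structural input I would invoke is the K\"ahler-Ricci-soliton analogue of the Matsushima-Lichnerowicz decomposition from \cite{TZ2, CTZ}: at a soliton metric $g_{KS}$ one has $h_{g_{KS}}=\theta_X(\omega_{KS})$ up to a constant, so the first lemma of this section gives $\triangle\theta_Y+Y(\theta_X)+\theta_Y=0$ for $Y\in\eta_r(M)$ commuting with $X$; since $X(\theta_Y)=Y(\theta_X)$ for such commuting fields, this reads $(P_{g_{KS}}+I)\theta_Y=0$. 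Hence $\Lambda_1(M,g_{KS})$ is spanned exactly by the normalized potentials $\{\theta_Y(\omega_{KS})\}$, which are the generators of the flows acting on $\mathcal H_X(\omega_{KS})$. I will also use two known facts: $\mu_{\omega_{KS}}(\phi_{\sigma})=\mu_{\omega_{KS}}(\phi)$ for every $\sigma\in{\rm Aut}^0(M)$ (invariance of the modified $K$-energy at a soliton, where ${\rm Aut}^0(M)$ is reductive and centralizes $X$, \cite{TZ2, CTZ}), and the convexity of $t\mapsto I_{\omega_{KS}}(\phi_{\sigma_t})$ along each one-parameter subgroup $\sigma_t=\exp(tY)$.

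First I would compute the first variation of $\sigma\mapsto I_{\omega_{KS}}(\phi_{\sigma})$ at $\sigma={\rm id}$ in a direction $Y\in\eta_r(M)$. Differentiating the definition of $I_{\omega_{KS}}$ and using $i_Y\omega=\sqrt{-1}\bar\partial\theta_Y$, this derivative equals a constant multiple of the weighted pairing $\int_M\phi\,\theta_Y(\omega_{KS})\,e^{\theta_X(\omega_{KS})}\frac{\omega_{KS}^n}{n!}$, i.e. the inner product defining $\Lambda_1^{\bot}$. Since $\{\theta_Y(\omega_{KS})\}$ spans $\Lambda_1$, the point $\phi$ is a critical point of $\sigma\mapsto I_{\omega_{KS}}(\phi_{\sigma})$ at the identity if and only if $\phi\in\Lambda_1^{\bot}$. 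Combined with the convexity above, a critical point is a global minimum along the orbit; therefore for $\phi\in\Lambda_1^{\bot}$ one has $I_{\omega_{KS}}(\phi)=\inf_{\sigma}I_{\omega_{KS}}(\phi_{\sigma})$, and conversely every $K_X$-invariant potential $\phi$ can be moved by some $\sigma_0$ to $\phi_{\sigma_0}\in\Lambda_1^{\bot}$ realizing this infimum.

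With this reduction both implications follow after a monotonicity normalization. Replacing $p$ (resp. $\bar p$) by its non-decreasing lower envelope $t\mapsto\inf_{s\ge t}p(s)$ only weakens the respective inequality and preserves the limit $+\infty$, so I may assume $p,\bar p$ non-decreasing. For the forward implication, given $\phi\in\Lambda_1^{\bot}$ the minimizing property yields $\inf_{\sigma}p(I_{\omega_{KS}}(\phi_{\sigma}))=p(I_{\omega_{KS}}(\phi))$, and Definition \ref{proper-definition} then gives the restricted estimate (\ref{properdef1}) with $\bar p=p$. For the converse, given an arbitrary $K_X$-invariant $\phi$ choose $\sigma_0$ with $\phi_{\sigma_0}\in\Lambda_1^{\bot}$ minimizing $I_{\omega_{KS}}$; invariance of $\mu_{\omega_{KS}}$ and (\ref{properdef1}) give
$$\mu_{\omega_{KS}}(\phi)=\mu_{\omega_{KS}}(\phi_{\sigma_0})\ge\bar p(I_{\omega_{KS}}(\phi_{\sigma_0}))=\inf_{\sigma}\bar p(I_{\omega_{KS}}(\phi_{\sigma})),$$
which is the properness of Definition \ref{proper-definition} with $p=\bar p$.

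The main obstacle I expect lies in the second paragraph: the claim that every $K_X$-invariant potential is carried into $\Lambda_1^{\bot}$ by some $\sigma_0\in{\rm Aut}^0(M)$ with the infimum of $I_{\omega_{KS}}$ actually \emph{attained}. This rests on the convexity together with the coercivity (properness) of $t\mapsto I_{\omega_{KS}}(\phi_{\sigma_t})$ along orbits transverse to $\Lambda_1$, i.e. the non-degeneracy of the second variation off ${\rm Ker}(P_{g_{KS}}+I)$. Upgrading an infimizing sequence to a genuine minimizer $\sigma_0$ is where the real work lies, and is precisely where I would adapt the compactness arguments of \cite{CTZ} to the present soliton-weighted setting.
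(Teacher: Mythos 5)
Your outer structure --- invariance of $\mu_{\omega_{KS}}$ under ${\rm Aut}^0(M)$, reduction to a point on the orbit lying in $\Lambda_1^{\bot}(M,g_{KS})$, and the monotone-envelope normalization of $p$, $\bar p$ --- matches the paper's strategy. But your key technical step fails as written. You claim that the first variation of $\sigma\mapsto I_{\omega_{KS}}(\phi_\sigma)$ at $\sigma={\rm id}$ in a direction $Y$ is a constant multiple of $\int_M\phi\,\theta_Y(\omega_{KS})e^{\theta_X(\omega_{KS})}\frac{\omega_{KS}^n}{n!}$. Differentiating $I_{\omega_{KS}}(\phi_{\sigma_t})=\frac1V\int_M\phi_{\sigma_t}\bigl(e^{\theta_X(\omega_{KS})}\frac{\omega_{KS}^n}{n!}-e^{\theta_X(\omega_{\phi_{\sigma_t}})}\frac{\omega_{\phi_{\sigma_t}}^n}{n!}\bigr)$ in fact produces $\dot\phi_{\sigma_t}|_{t=0}=\theta_Y(\omega_{\phi})$ --- the potential of $Y$ with respect to the \emph{moving} metric $\omega_\phi$ --- integrated against both measures, plus a term in which the measure $e^{\theta_X(\omega_{\phi_{\sigma_t}})}\omega_{\phi_{\sigma_t}}^n$ itself is differentiated (a Lie-derivative term involving $Y(\phi)$). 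None of this collapses to the $g_{KS}$-weighted pairing that defines $\Lambda_1^{\bot}$, so your equivalence ``critical at the identity iff $\phi\in\Lambda_1^{\bot}$'' is unsubstantiated for the functional $I$. Likewise, the convexity of $t\mapsto I_{\omega_{KS}}(\phi_{\sigma_t})$ along one-parameter subgroups, which you need to promote criticality to global minimality, is asserted without proof and is not among the standard convexity statements (those concern other functionals, along geodesics). The paper sidesteps both problems by working with $\Phi(\sigma)=I_{\sigma^*\omega_\phi}(-\phi_\sigma)-J_{\sigma^*\omega_\phi}(-\phi_\sigma)$: reversing the roles of the two metrics makes the variation take place at the \emph{fixed} endpoint $\omega_{KS}$, and it is precisely for this functional that \cite{TZ1} supplies the two facts you need --- existence of a minimizer $\tau\in{\rm Aut}^0(M)$, and the characterization that $\phi\in\Lambda_1^{\bot}(M,g_{KS})$ iff $I_{\omega_\phi}(-\phi)-J_{\omega_\phi}(-\phi)=\inf_\sigma\Phi(\sigma)$.

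Two further points. First, the exact identity you aim for, $I_{\omega_{KS}}(\phi)=\inf_\sigma I_{\omega_{KS}}(\phi_\sigma)$ for $\phi\in\Lambda_1^{\bot}$, is stronger than what the lemma requires and is doubtful: $I$ and $I-J$ are only comparable up to multiplicative constants by (\ref{ij}), so a minimizer of one need not minimize the other. The paper's chain only uses the symmetry $I_{\omega_{KS}}(\phi_\sigma)=I_{\sigma^*\omega_\phi}(-\phi_\sigma)$ together with two-sided comparability of $I$ and $I-J$, absorbing all constants into a new proper function $\tilde p$; this weaker statement suffices and is what you should aim for. Second, the attainment of the minimizer --- which you yourself flag as the ``main obstacle'' and propose to settle later by adapting \cite{CTZ} --- is exactly the analytic content the paper imports from \cite{TZ1}; leaving it open leaves the sufficiency direction unproved, since without an actual $\sigma_0$ with $\phi_{\sigma_0}\in\Lambda_1^{\bot}$ you cannot invoke (\ref{properdef1}) at all. (Your identification of $\Lambda_1$ with the holomorphy potentials $\theta_Y(\omega_{KS})$ via the soliton Matsushima theorem of \cite{TZ2} is correct, but it is the only part of the key step that survives.)
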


\begin{proof}
First we prove the necessary part of the lemma.
We  choose the K\"ahler-Ricci soliton metric $g_{KS}$ as an initial metric.
Then we induce a functional on
$\text{Aut}^0(M)$ for any $\phi\in\mathcal H_X(\omega_{KS})$ by
$$\Phi(\sigma)=I_{\sigma^*\omega_\phi}(-\phi_{\sigma})-J_{\sigma^*\omega_\phi}(-\phi_{\sigma}),$$
 where $\phi_{\sigma}$ is an induced K\"ahler potential defined by
$$\omega_{KS}+\sqrt{-1}\partial\bar\partial\phi_\sigma.$$
$I_{\sigma^*\omega_\phi}(\psi)$ and $J_{\sigma^*\omega_\phi}(\psi)$
are functionals $I(\psi)$ and $J(\psi)$ respectively while the initial metric $\omega_{KS}$ is replaced by $\sigma^*\omega_\phi$. According to \cite{TZ1},
one can show that there exists a $\tau\in\text{Aut}^0(M)$ such that
$$\Phi(\tau)=\inf_{\sigma\in\text{Aut}^0(M)}\Phi(\sigma)$$
and consequently $\phi_{\tau}\in\Lambda_1^{\bot}(M,g_{KS})$.
In fact,  from the proof of uniqueness of K\"ahler-Ricci solitons in \cite{TZ1}
it can be  proved that
$\phi\in\Lambda_1^{\bot}(M,g_{KS})$ iff
$$I_{\omega_\phi}(-\phi)-J_{\omega_\phi}(-\phi)
=\inf_{\sigma\in\text{Aut}^0(M)}\Phi(\sigma).$$
Thus by the assumption (\ref{properdef}) and relation (\ref{ij}),  for any
$\phi\in\Lambda_1^{\bot}(M,g_{KS})$, we have
\beqs
\mu_{\omega_{KS}}(\phi)
&\geq& \inf_{\sigma\in\text{Aut}^0(M)}p(I_{\omega_{KS}}(\phi_{\sigma}))\\[4pt]
&=&\inf_{\sigma\in\text{Aut}^0(M)}p(I_{\sigma^*\omega_\phi}(-\phi_\sigma))\\[4pt]
&\geq& \inf_{\sigma\in\text{Aut}^0(M)} \tilde p(\Phi(\sigma))\notag\\[4pt]
&=&\tilde p(I_{\omega_\phi}(-\phi)-J_{\omega_\phi}(-\phi))\\
&\geq& \tilde p(I_{\omega_\phi}(-\phi))\\
&=& \tilde p(I_{\omega_{KS}}(\phi)),
\eeqs
where $\tilde p(t)$ is another continuous function in $\mathbb R$
which satisfying (\ref{cond1}).

Next we prove the sufficient part.  We note that  the modified $K$-energy is invariant under ${\rm Aut}^0(M)$ \cite{TZ2}.
Then by the discussion at last paragraph, for any $\phi\in\mathcal H_X(\omega_{KS})$, we can choose a $\tau\in\text{Aut}^0(M)$ such that
$\phi_\tau\in\Lambda_1^{\bot}(M,g_{KS})$ and
$$\mu(\phi)=\mu(\phi_\tau).$$
Thus by (\ref{properdef1}), we get
$$\mu(\phi)\geq \bar p(I(\phi_\tau))\geq\inf_{\sigma\in\text{Aut}^0(M)}
\bar p(I(\phi_\sigma)).$$
\end{proof}

The converse of Lemma \ref{sufficient-condition} was conjectured by
Tian in sense of (\ref{properdef1}) in  the  case of K\"ahler-Einstein metrics \cite{T1}
and was proved by him under  the assumption that there  is no any holomorphic vector field on $M$.
Thus one may believe  that the converse of Lemma \ref{sufficient-condition}
is also true as a generalization of  Tian's conjecture for the case of K\"ahler-Ricci solitons \cite{CTZ}.
In  this section  we give a positive answer in  the case of  toric Fano manifolds. Namely, we shall  prove

\begin{theorem}\label{WZZ-2}
On a toric Fano manifold,  the  modified $K$-energy is proper
for  toric invariant K\"ahler potentials modula toric action.
\end{theorem}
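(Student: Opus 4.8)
The plan is to pass from $K_X$-invariant (toric invariant) K\"ahler potentials to convex functions on the moment polytope $P$, and to reduce the properness of $\mu_{\omega_g}$ to the linear stability estimate of Lemma \ref{stable}. Writing $\varphi=\varphi_0+\phi$ in the logarithmic coordinates, $\varphi$ is a strictly convex function on $\mathbb R^n$, and its Legendre transform is the symplectic potential $u$, a convex function on $P$ with $D^2u=(D^2\varphi)^{-1}$. Under the moment map the measure $e^{\theta_X(\omega_\phi)}\frac{\omega_\phi^n}{n!}$ pushes forward to $e^{\theta(x)}\,dx$ on $P$. First I would rewrite the modified $K$-energy (\ref{kenergy}) in these coordinates. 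The entropy-type first term becomes, up to an additive constant, $\mathcal E(u)=-\int_P\log\det(D^2u)\,e^{\theta(x)}\,dx$, while the remaining $I-J$ and linear terms assemble, again up to a constant and modulo the reference potential $u_0$, into exactly the linear functional
$$\mathcal L(u)=\int_{\partial P}u\, e^{\theta(x)}\,d\sigma-\int_P(n+\theta(x))e^{\theta(x)}u\,dx$$
of the previous Proposition. Thus $\mu_{\omega_g}(\phi)=\mathcal E(u)+\mathcal L(u)+C$.

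Next I would dispose of the toric action. The non-compact directions of the torus act on $u$ by adding affine-linear functions, and since $X$ is the soliton vector field $\mathcal L$ annihilates linear functions; hence I may normalize $u\in\tilde{\mathcal C}_\infty$, i.e. $\inf_P u=u(0)=0$, which fixes one representative in each toric orbit. For such $u$, Lemma \ref{stable} gives
$$\mathcal L(u)\ge\lambda\int_{\partial P}u\,e^{\theta(x)}\,d\sigma.$$
It remains to compare $\int_{\partial P}u\,e^{\theta(x)}\,d\sigma$ with the Aubin functional $I_{\omega_g}(\phi)$. A direct computation on $P$ shows these two quantities are each controlled by $\sup_P u$ and by one another up to multiplicative and additive constants, so that $\int_{\partial P}u\,e^{\theta(x)}\,d\sigma\to+\infty$ precisely as $I_{\omega_g}(\phi)\to+\infty$; together with (\ref{ij}) this lets me convert a lower bound in $\int_{\partial P}u\,e^{\theta(x)}\,d\sigma$ into the required bound (\ref{properdef}) in $I_{\omega_g}$.

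The heart of the proof, and the main obstacle, is a lower bound for the entropy $\mathcal E(u)$. Unlike $\mathcal L$, the term $\mathcal E(u)$ is not bounded below on $\tilde{\mathcal C}_\infty$: along a ray $u=a\psi$ it decays, but only logarithmically in $a$, whereas $\mathcal L(u)$ grows linearly. The goal is therefore to prove, following the method of \cite{ZZ1}, an estimate of the form
$$\mathcal E(u)\ge -\epsilon\int_{\partial P}u\,e^{\theta(x)}\,d\sigma-C_\epsilon,\qquad\forall\,\epsilon>0,\ u\in\tilde{\mathcal C}_\infty,$$
with $C_\epsilon$ independent of $u$. This is obtained by a convexity and partition argument on $P$ that trades the concavity of $\log\det$ against the boundary integral of $u$; the weight $e^{\theta(x)}$ is positive and bounded above and below on $\overline P$, so it affects the argument only through constants.

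Granting this estimate, I would take $\epsilon=\tfrac{\lambda}{2}$ to obtain
$$\mu_{\omega_g}(\phi)=\mathcal E(u)+\mathcal L(u)+C\ge \tfrac{\lambda}{2}\int_{\partial P}u\,e^{\theta(x)}\,d\sigma-C'.$$
Since the right-hand side tends to $+\infty$ with $I_{\omega_g}(\phi)$, there is a continuous function $p(t)\to+\infty$ with $\mu_{\omega_g}(\phi)\ge p(I_{\omega_g}(\phi))$ after minimizing over the toric orbit, which is exactly the properness modulo the toric action asserted in Theorem \ref{WZZ-2}; by Lemma \ref{sufficient-condition} this also recovers the existence of the K\"ahler--Ricci soliton. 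I expect the entropy estimate to require the most care, since it is where the geometry of the polytope and the precise competition between the linear growth of $\mathcal L$ and the logarithmic loss of $\mathcal E$ enter.
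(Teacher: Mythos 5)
Your overall strategy coincides with the paper's: reduce $\mu_{\omega_g}$ in symplectic coordinates to $\mathcal F(u)=-\int_P\log\det(u_{ij})e^{\theta(x)}\,dx+\mathcal L(u)$, use the toric action to normalize $u\in\tilde{\mathcal C}_\infty$ (legitimate since $\mathcal L$ annihilates affine functions when $X$ is the soliton vector field), invoke Lemma \ref{stable}, and beat the entropy term by an estimate of the form $-\int_P\log\det(u_{ij})e^{\theta(x)}\,dx\ge -\epsilon\int_{\partial P}u\,e^{\theta(x)}\,d\sigma-C_\epsilon$. That entropy estimate is exactly what the paper proves: Lemma \ref{determinent-estimate} gives $\int_P\log\det(u_{ij})e^{\theta(x)}\,dx\le\mathcal L_B(u)+C$ by concavity of $\log\det$ around the Guillemin potential $u_0$ plus integration by parts, and the arbitrarily small coefficient then comes for free from the scaling $u\mapsto ru$ in Proposition \ref{f-proper}, using $\log\det(rD^2u)=n\log r+\log\det(D^2u)$ and linearity of $\mathcal L_B$. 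So up to that point your outline and the paper agree, modulo your deferring the proof of the entropy bound to the method of \cite{ZZ1}.

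The genuine gap is the step you dismiss as ``a direct computation on $P$'': converting the lower bound in $\int_{\partial P}u\,e^{\theta(x)}\,d\sigma$ (equivalently in $H(u)=\int_P u\,e^{\theta(x)}\,dx$) into a lower bound in $I_{\omega_g}(\phi)$. By (\ref{ij}) it suffices to compare with $J_{\omega_g}(\phi)$, and one has
\begin{equation*}
J_{\omega_g}(\phi)=\frac1V\int_M\phi\,e^{\theta_X(\omega_g)}\frac{\omega_g^n}{n!}+H(u_\phi)-H(u_0).
\end{equation*}
The last two terms are polytope quantities, but the first one is the integral of $\phi$ against the \emph{fixed} reference measure, which pushes forward to $P$ under the moment map $\nabla\varphi_0$ of $\omega_g$, not of $\omega_\phi$: it equals $(2\pi)^n\int_P(\varphi-\varphi_0)(\nabla u_0(x))\,e^{\theta(x)}\,dx$, and $\varphi(\nabla u_0(x))$ is a Legendre supremum, not $\langle x,\nabla u_0(x)\rangle-u(x)$. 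Hence this term is not a functional of $u$ in any direct sense, and no pointwise computation on $P$ bounds it; a priori it could swamp $H(u)$. Establishing that it is uniformly bounded for suitably normalized potentials is precisely the content of the paper's Lemma \ref{h-functional} ($|J_{\omega_g}(\tilde\phi)-H(u_{\tilde\phi})|\le C$), whose proof there is a genuinely manifold-level argument: the Green's function of the weighted operator $P_g=\Delta_g+X(\cdot)$ with a uniform lower bound (from \cite{CTZ}), the inequality $\Delta\tilde\phi\ge -n$, the uniform bound on $X(\tilde\phi)$, and a volume concentration argument to control $\sup_M\tilde\phi$. (One can alternatively argue by Legendre duality: with $\inf_P u=u(0)=0$ one gets $\varphi\ge 0$ and $\varphi-\varphi_0\le\sup_P(u_0-u)\le\sup_P u_0$, so $-\varphi_0\le\tilde\phi\le\sup_P u_0$, and the integral is bounded since $\varphi_0\circ\nabla u_0$ is integrable on $P$; but some such argument must be supplied.) Without this lemma, your chain from properness of $\mathcal F$ on $\tilde{\mathcal C}_\infty$ to the bound (\ref{properdef}) in terms of $I_{\omega_g}$ modulo the torus is incomplete.
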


Theorem \ref{WZZ-2} has been proved under  the assumption that the Futaki invariant vanishes \cite{ZZ1}.  In the following, we alway assume that $M$ is  a toric Fano manifold.

\subsection{The reduction of modified $K$-energy}

Denote
$\mathcal H_{G_0}(\omega_g)\subset \mathcal H_{X}(\omega_g) $ to  be the set of $G_0$-invariant K\"ahler potentials.  Then $\mathcal H_{G_0}(\omega_g)$ is equal to the set
$$\{\phi\in C^{\infty}({\mathbb R^n})| ~|\phi|<\infty~{\rm and}~\varphi_0+\phi~{ \rm is~  strictly ~convex}\}.$$
 By using the Legendre transformation $\xi=(D\varphi_0)^{-1}(x)$, one sees
that the function (Legendre  dual function) defined by
$$u_0(x)=\langle \xi,D\varphi_0(\xi) \rangle -\varphi_0(\xi)=\langle \xi(x),
x\rangle-\varphi_0(\xi(x)),\ \forall\ x\in P$$
is strictly convex. Set the space of symplectic functions by
$$\mathcal C =\{u=u_0+f\ | \  u\
\text{is a  strictly convex function in }\ P,\ f\in C^{\infty}(\overline{P})\}.$$
It was shown in  \cite{Ab1} that  there is a bijection between $\mathcal C$ and $\mathcal H_{G_0}(\omega_g)$.

\begin{proposition}
Let  $\phi\in \mathcal H_{G_0}(\omega_g)$ and  $u$
be the Legendre  dual function of $\varphi_0+\phi$.
Then the modified $K$-energy is given by
\beqn\label{reduced-energy}\mu_{\omega_g}(\omega_\phi)=\frac{(2\pi)^{n}}{V} \mathcal F(u)+C,
\eeqn
where
\beq\label{reduced}
\mathcal F(u) =
- \int_{P} \log\det(u_{ij})e^{\theta(x)}\,dx + \mathcal L(u),
\eeq
and $C$ is a constant.
\end{proposition}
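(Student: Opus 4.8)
The plan is to compute $\mu_{\omega_g}(\omega_\phi)$ directly in the toric coordinates, starting from the expression (\ref{kenergy}), and to transport every integral from $M$ to the moment polytope $P$ by means of the Legendre transform. First I would restrict all integrals to the open dense orbit $(\mathbb C^*)^n$ (whose complement is a null set) and integrate out the angular variables $\eta_i$; since each integrand is $G_0$-invariant this only produces an overall factor $(2\pi)^n$ and reduces every integral over $M$ to an integral over $\mathbb R^n$ in the variables $\xi=(\xi_1,\dots,\xi_n)$. Writing $\varphi=\varphi_0+\phi$, one has $\omega_g^n/n!=\det((\varphi_0)_{ij})\,d\xi$ and $\omega_\phi^n/n!=\det(\varphi_{ij})\,d\xi$ on the orbit, up to the fixed constant. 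The structural input I would use throughout is that in the symplectic coordinates $x=\nabla\varphi(\xi)$ attached to $\omega_\phi$ the soliton potential is the fixed linear function $\theta_X(\omega_\phi)=\theta(x)=\sum_i\theta_ix_i$, independently of $\phi$, because $P$ does not depend on the metric in $2\pi c_1(M)$; moreover under $x=\nabla\varphi(\xi)$ the Monge--Amp\`ere measure pushes forward to Lebesgue measure $dx$ on $P$, the Legendre dual $u$ of $\varphi$ satisfies $(u_{ij})=(\varphi_{ij})^{-1}$, and hence $\log\det(\varphi_{ij})=-\log\det(u_{ij})$.

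With these substitutions I would treat the three groups of terms in (\ref{kenergy}) separately. The entropy (Monge--Amp\`ere) term, after changing variables to $x$, contributes the principal term $-\int_P\log\det(u_{ij})\,e^{\theta(x)}\,dx$ of $\mathcal F(u)$ (after accounting for $\log\det\varphi_{ij}=-\log\det u_{ij}$), together with a background contribution involving $\log\det((\varphi_0)_{ij})$ evaluated along $\xi(x)=\nabla u(x)$ and a linear term in $\theta(x)$. The Aubin functionals $I_{\omega_g}(\phi)-J_{\omega_g}(\phi)$ and the Ricci-potential term $\frac1V\int_M(h_g-\theta_X(\omega_g))(\cdots)$ are reduced in the same coordinates; here I would invoke the toric Fano identity $h_g=-\log\det((\varphi_0)_{ij})-\varphi_0+\mathrm{const}$, which follows from ${\rm Ric}(\omega_g)=-\sqrt{-1}\partial\bar\partial\log\det((\varphi_0)_{ij})$ and (\ref{potential-vector-normalization}), to cancel the background determinant produced by the entropy against the Ricci-potential contribution. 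The pieces carrying $\theta_X(\omega_g)=\theta(\nabla\varphi_0)$ cancel likewise, and what survives is a sum of integrals that are linear in $u$.

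The heart of the argument, and the step I expect to be the main obstacle, is showing that this surviving linear-in-$u$ sum equals $\mathcal L(u)$ modulo a constant independent of $u$. Concretely, one integration by parts on $P$ transfers an interior derivative to the boundary and produces the measure $d\sigma=\langle\vec n,x\rangle\,d\sigma_0$ with the correct weight $e^{\theta(x)}$, giving the term $\int_{\partial P}u\,e^{\theta}\,d\sigma$; using the Legendre relations $\nabla_\xi\varphi=x$, $\nabla_x u=\xi$ and the identity $\int_{\partial P}u\,e^{\theta}\,d\sigma=\int_P(nu+\langle x,\nabla u\rangle+\theta u)e^{\theta}\,dx$, one checks that the remaining interior integrals collapse to $-\int_P(n+\theta(x))e^{\theta(x)}u\,dx$, so that the total is precisely $\mathcal L(u)=\int_P\langle x,\nabla u\rangle e^{\theta}\,dx$. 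The normalization $\int_M\theta_X(\omega_g)e^{h_g}\omega_g^n/n!=0$ and the fact that $\varphi_0$, $P$ and $\theta$ are fixed guarantee that every term not of the form appearing in $\mathcal F(u)$ is genuinely $u$-independent and may be absorbed into $C$; the factor $(2\pi)^n/V$ is exactly the Jacobian--torus constant produced in the first step.

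Finally, I would note that the same conclusion can be reached variationally: since $\mu_{\omega_g}$ is defined as a path integral, it suffices to match first variations, using $\dot u(x)=-\dot\phi(\xi(x))$ together with the Abreu-type formula for the modified scalar curvature in symplectic coordinates, after which equality up to an additive constant is immediate. In either route, the delicate boundary bookkeeping on $\partial P$, where the integration by parts must reproduce the weighted boundary measure on each face, is where the geometry of $P$ genuinely enters and is the most error-prone part to pin down.
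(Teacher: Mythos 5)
Your proposal is correct and takes essentially the same route as the paper's own proof: reduce each term of (\ref{kenergy}) to the moment polytope via the Legendre transform, using $h_g=-\varphi_0-\log\det((\varphi_0)_{ij})+\mathrm{const}$, $\det(\varphi_{ij})\,d\xi=dx$, $\dot u_t=-\dot\phi_t$ and $\theta_X(\omega_\phi)=\theta(x)$, and then integrate by parts on $P$ (your identity $\int_{\partial P}u\,e^{\theta}\,d\sigma=\int_P(nu+\langle x,\nabla u\rangle+\theta u)e^{\theta}\,dx$) to identify the surviving linear-in-$u$ terms with $\mathcal L(u)$. The only discrepancy is that the sign you assert for the entropy contribution, $-\int_P\log\det(u_{ij})e^{\theta(x)}\,dx$, tacitly corrects the sign typo in (\ref{kenergy}) (the leading minus on the log term), exactly as the paper's own computation (\ref{redu}) does, so the two arguments agree in substance.
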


\begin{proof}
By (\ref{kenergy}), a direct computation shows
\beqn\label{redu}\nonumber
&&\mu_{\omega_{g}}(\phi)\\
&&= \frac{1}{V}\int_{M}
\log\left(\frac{e^{\theta_{X}(\phi)}\omega_{\phi}^{n}}{e^{\theta_{X}(\omega_g)}\omega_{g}^{n}}\right)
e^{\theta_{X}(\omega_\phi)}\frac{\omega_{\phi}^{n}}{n!}-\left[\frac{1}{V}\int_{0}^{1}\int_{M} \dot\phi_{t}
e^{\theta_{X}(\omega_{\phi_{t}})}\frac{\omega_{\phi_{t}}^{n}}{n!}\wedge dt -
\frac{1}{V}\int_{M}\phi e^{\theta_{X}(\omega_\phi)}\frac{\omega_{\phi}^{n}}{n!}\right]\nonumber\\
&&  \ \ \ -  \frac{1}{V}\int_{M}(h_{g} -
\theta_{X}(g))e^{\theta_{X}(\omega_\phi)}\frac{\omega_{\phi}^{n}}{n!} +
\frac{1}{V}\int_{M}(h_{g} -
\theta_{X}(\omega_g))e^{\theta_{X}(\omega_g)}\frac{\omega_{g}^{n}}{n!}\nonumber\\
&&= \frac{1}{V}\int_{M}
\log\left(\frac{\omega_{\phi}^{n}}{\omega_{g}^{n}} e^{\phi - h_{g}}\right)
e^{\theta_{X}(\omega_\phi)}\frac{\omega_{\phi}^{n}}{n!}- \frac{1}{V}\int_{0}^{1}\int_{M} \dot\phi_{t}
e^{\theta_{X}(\omega_{\phi_{t}})}\frac{\omega_{\phi_{t}}^{n}}{n!}\wedge dt+
\frac{1}{V}\int_{M}\theta_{X}(\omega_\phi) e^{\theta_{X}(\omega_\phi)}\frac{\omega_{\phi}^{n}}{n!}\nonumber\\
&& \ \ \ +\frac{1}{V}\int_{M}(h_{g} -\theta_{X}(\omega_g))e^{\theta_{X}(\omega_g)}\frac{\omega_{g}^{n}}{n!}\nonumber\\
&&= \frac{1}{V}\int_{M}
\log\left(\frac{\omega_{\phi}^{n}}{\omega_{g}^{n}} e^{\phi - h_{g}}\right)
e^{\theta_{X}(\omega_\phi)}\frac{\omega_{\phi}^{n}}{n!}- \frac{1}{V}\int_{0}^{1}\int_{M} \dot\phi_{t}
e^{\theta_{X}(\omega_{\phi_{t}})}\frac{\omega_{\phi_{t}}^{n}}{n!}\wedge dt +{\rm const.}
\eeqn
On the other hand,
$$h_{g} = - \varphi_{0} - \log \det (\varphi_{0ij})+C.$$
 Then
$$\frac{\omega_{\phi}^{n}}{\omega_{g}^{n}} e^{\phi - h_{g}}
= C\det (\varphi_{ij})e^{\varphi}.$$
It follows
\beqn \label{determinent}
&&\int_{M}\log\left(\frac{\omega_{\phi}^{n}}{\omega_{g}^{n}} e^{\phi - h_{g}}\right)
e^{\theta_{X}(\omega_\phi)}\frac{\omega_{\phi}^{n}}{n!}\nonumber\\
&&=(2\pi)^n \left[\int_{\mathbb R^n}
\log\det(\varphi_{ij}) e^{X(\varphi)}\det(\varphi_{ij})\,d\xi+\int_{\mathbb R^n}  \varphi e^{X(\varphi)}\det(\varphi_{ij})\,d\xi \right].
\eeqn
By using the relations
\beqs
\varphi=\sum_{i=1}^n {x_{i}u_{i}} - u,\
\det(\varphi_{ij})\,d\xi= dx,\ \
\dot\phi_t= -\dot u_t,
\eeqs
where  $\phi_t$ is a path in $\mathcal H_{G_0}(\omega_g)$ and $u_t$ is the symplectic potential
of $\varphi_t=\varphi_0+\phi_t$, we also get
 \beqn\label{path-integral}
\int_{0}^{1}\int_{M} \dot\phi_{t}
e^{\theta_{X}(\omega_{\phi_{t}})}\frac{\omega_{\phi_{t}}^{n}}{n!}\wedge dt
&&=(2\pi)^n
\int_{0}^{1}\int_{\mathbb R^n} \dot\phi_{t}
e^{X(\varphi_{t})}\det({\varphi_{t}}_{ij})\,d\xi \,\wedge dt \nonumber\\
&&=-(2\pi)^n\int_{P} ue^{\theta(x)}dx + {\rm const.},
\eeqn
\beqn\label{determinent-3} &&\int_{\mathbb R^n}
\log\det(\varphi_{ij}) e^{X(\varphi)}\det(\varphi_{ij})\,d\xi+\int_{\mathbb R^n}  \varphi e^{X(\varphi)}\det(\varphi_{ij})\,d\xi \nonumber \\
&&=-\int_{P}\log\det(u_{ij}) e^{\theta(x)}\,dx+
\int_{P} (\sum_{i=1}^n {x_{i}u_{i}} - u)e^{\theta(x)}\,dx.
\eeqn
Hence inserting (\ref{determinent})-(\ref {determinent-3}) into (\ref{redu}), we obatin
\beqs
&&\mu_{\omega_{g}}(\phi)
= \frac{(2\pi)^{n}}{V}\left[-\int_{P}\log\det(u_{ij}) e^{\theta(x)}\,dx
+\int_{P} \sum_{i=1}^n {x_{i}u_{i}} e^{\theta(x)}\,dx\right] + C.
\eeqs
Integrating by parts, we  deduce (\ref{reduced-energy}) immediately.
\end{proof}

\subsection{Properness of $\mathcal F(u)$}

In this subsection, we show the properness of $\mathcal F(u)$ by similar arguments as in \cite{D1, ZZ1}.
First, we have

\begin{lemma}\label{determinent-estimate}
There exists a constant $C>0$ such that for any $u\in\mathcal C_\infty$, it holds
\beq\label{inq1}
\int_{P} \log\det(u_{ij})e^{\theta(x)}\,dx\leq \mathcal L_B(u)+C
\eeq
where $B=(u_0)_{ij}^{ij}+2(u_0)_i^{ij}\theta_j+(u_0)^{ij}\theta_i\theta_j$ is a bouned function,
and
\beq\label{bfunctional}
\mathcal L_B(u)=\int_{\partial P}ue^{\theta(x)}\,d\sigma+\int_P Bue^{\theta(x)}\,dx.
\eeq
\end{lemma}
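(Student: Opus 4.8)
The goal is to bound $\int_P \log\det(u_{ij})\, e^{\theta(x)}\, dx$ from above by $\mathcal{L}_B(u)$ plus a constant. Let me think about the structure. We have $u = u_0 + f$ where $u_0$ is the fixed symplectic potential and $f \in C^\infty(\bar{P})$. The Hessian $u_{ij}$ is what appears under the log-determinant, and this is a concave function of the Hessian, so concavity should be the key tool.

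Let me work out the details.

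---

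The plan is to exploit the concavity of $\log\det$ on the cone of positive-definite symmetric matrices, which gives the tangent-plane (linearization) inequality
$$\log\det A \le \log\det A_0 + A_0^{ij}(A_{ij} - (A_0)_{ij})$$
for any positive-definite $A, A_0$, where $A_0^{ij}$ denotes the inverse matrix. The natural choice is to take $A = (u_{ij})$ the Hessian of the full symplectic potential and $A_0 = ((u_0)_{ij})$ the Hessian of the fixed reference potential $u_0$. First I would apply this pointwise to obtain
$$\log\det(u_{ij}) \le \log\det((u_0)_{ij}) + (u_0)^{ij}\bigl(u_{ij} - (u_0)_{ij}\bigr) = \log\det((u_0)_{ij}) + (u_0)^{ij} f_{ij},$$
using $u = u_0 + f$ so that $u_{ij} - (u_0)_{ij} = f_{ij}$. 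The term $\log\det((u_0)_{ij})$ is a fixed bounded function and contributes to the constant $C$ after integration against $e^{\theta(x)}$.

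Next I would integrate the inequality against $e^{\theta(x)}\,dx$ over $P$ and handle the term $\int_P (u_0)^{ij} f_{ij}\, e^{\theta(x)}\, dx$ by integrating by parts twice to move both derivatives off $f$ and onto the weight $(u_0)^{ij} e^{\theta(x)}$. Each integration by parts produces a boundary term on $\partial P$ together with interior terms; the goal is to reorganize everything so that the boundary integral reproduces $\int_{\partial P} u\, e^{\theta(x)}\, d\sigma$ (up to bounded corrections absorbed in $C$, since $u_0$ is fixed and $f = u - u_0$) and the interior integral reproduces $\int_P B u\, e^{\theta(x)}\, dx$ with precisely $B = (u_0)_{ij}^{ij} + 2(u_0)_i^{ij}\theta_j + (u_0)^{ij}\theta_i\theta_j$. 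Here the three terms in $B$ are exactly the coefficients that arise when the second-order operator $\partial_i\partial_j\bigl((u_0)^{ij} e^{\theta(x)}\,\cdot\,\bigr)$ is expanded by the product rule: the pure $(u_0)^{ij}$-derivative term, the cross term where one derivative hits $(u_0)^{ij}$ and one hits $e^{\theta}$ (giving the factor $\theta_j$ and the factor $2$ from symmetry), and the term where both derivatives hit $e^{\theta}$ (giving $\theta_i\theta_j$).

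The main obstacle I anticipate is the careful bookkeeping of the boundary integrals. Since $f$ and its normal derivative along $\partial P$ need not vanish, the integration by parts on the polytope $P$ will generate several boundary contributions on each face, and one must verify both that the leading boundary term assembles into $\int_{\partial P} u\, e^{\theta}\, d\sigma$ against the correct surface measure $d\sigma = \langle \vec{n}, x\rangle\, d\sigma_0$, and that the remaining boundary and interior remainder terms involving only $u_0$ (not $f$) are genuinely bounded and can be swept into $C$. Verifying that $B$ is a bounded function requires knowing that $(u_0)^{ij}$ and its derivatives are controlled near $\partial P$, which follows from the structure of the reference symplectic potential $u_0$ and the smoothness of $\theta(x) = \sum \theta_i x_i$; this boundedness of $B$ is exactly the payoff that makes the estimate useful in the subsequent properness argument. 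Once the boundary and interior terms are correctly identified, the inequality $\int_P \log\det(u_{ij}) e^{\theta}\,dx \le \mathcal{L}_B(u) + C$ follows directly from the pointwise concavity inequality, and the proof is complete.
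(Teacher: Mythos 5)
Your strategy coincides with the paper's: the tangent--plane (concavity) inequality $\log\det(u_{ij})\le\log\det((u_0)_{ij})+(u_0)^{ij}f_{ij}$ with $f=u-u_0$, followed by two integrations by parts of $\int_P (u_0)^{ij}f_{ij}e^{\theta(x)}dx$, and your identification of the interior coefficient $B$ from expanding $\partial_i\partial_j((u_0)^{ij}e^{\theta(x)}\,\cdot\,)$ is correct. However, the step you defer as ``careful bookkeeping of the boundary integrals'' is exactly where the content of the lemma lies, and it cannot be completed by generic Stokes-type manipulations. The two facts that make it work are special boundary properties of the Guillemin potential $u_0=\sum_k l_k\log l_k$, quoted in the paper from Donaldson \cite{D1,D2}: in the limit over the interior polytopes $P_\delta$,
\[
(u_0)^{ij}n_j\,d\sigma_0\longrightarrow 0,\qquad -(u_0)^{ij}_{\ j}n_i\,d\sigma_0\longrightarrow d\sigma .
\]
The first kills the boundary term $\int_{\partial P_\delta}(u_0)^{ij}f_i n_j e^{\theta(x)}d\sigma_0$ from the first integration by parts; the second converts the term $-\int_{\partial P_\delta}(u_0)^{ij}_{\ j}n_i f e^{\theta(x)}d\sigma_0$ from the second integration by parts into $\int_{\partial P}f e^{\theta(x)}d\sigma$. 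In particular, your picture that the measure $d\sigma=\langle\vec n,x\rangle d\sigma_0$ ``assembles'' from moving derivatives onto the weight misattributes the mechanism: $d\sigma$ appears because $-(u_0)^{ij}_{\ j}n_i$ equals $\langle\vec n,x\rangle$ on $\partial P$ for this particular reference potential; a different $u_0$ would produce a different boundary density and a different lemma.

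A second, related gap: for $u\in\mathcal C_\infty$ (smooth convex on $\overline P$ --- note this is not the space $\mathcal C$, so your assumption $f\in C^{\infty}(\overline P)$ is off), the difference $f=u-u_0$ has gradient blowing up logarithmically at $\partial P$, because $\nabla u_0$ does. Hence $f$ is smooth only on compact subsets of $P$; one must work on $P_\delta$ and pass to the limit $\delta\to 0$, checking that the $f_i$-boundary term vanishes because $(u_0)^{ij}n_j$ vanishes (linearly in $\delta$) fast enough to beat the logarithmic blow-up of $f_i$. Likewise, the boundedness of $B$ is not automatic --- the entries of $(u_0)^{ij}$ and their derivatives individually degenerate near $\partial P$ --- and is again a consequence of the explicit structure of $u_0$. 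Once you supply Donaldson's boundary asymptotics (or verify them directly for $u_0=\sum_k l_k\log l_k$), your outline becomes the paper's proof; without them, the key identity
\[
\int_P (u_0)^{ij}f_{ij}e^{\theta(x)}dx=\int_{\partial P}fe^{\theta(x)}d\sigma+\int_P Bfe^{\theta(x)}dx
\]
remains unproven.
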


\begin{proof}
Let $f=u-u_0$. By the convexity of $-\log\det$,
we have
$$\log\det (u_{ij})\leq \log\det ((u_0)_{ij})+(u_0)^{ij}f_{ij}.$$
For any $\delta > 0$, let $P_{\delta}$
be the interior polygon with faces parallel to those of $P$
separated by distance $\delta$, then $f$ is smooth over the
closure of $P_{\delta}$.

Integrating by parts,
$$\int_{P_{\delta}}(u_0)^{ij}f_{ij} e^{\theta(x)}\,dx = \int_{\partial P_{\delta}}
(u_0)^{ij}f_{i}n_{j} e^{\theta(x)} \,d\sigma_0 -
\int_{P_{\delta}}(u_0)^{ij}_{j}f_{i}e^{\theta(x)}\,dx -
\int_{P_{\delta}} (u_0)^{ij}f_{i}\theta_{j}e^{\theta(x)}\,dx.$$
Integrating by parts for the last two terms again,
we have
\beqs
\int_{P_{\delta}}(u_0)^{ij}f_{ij} e^{\theta(x)}\,dx
&=& \int_{\partial P_{\delta}} (u_0)^{ij}f_{i}n_{j} e^{\theta(x)}\,
d\sigma_0 - \int_{\partial P_{\delta}} (u_0)^{ij}_{j}n_{i}f e^{\theta(x)} d\sigma_0 - \int_{\partial P_{\delta}} (u_0)^{ij}n_{i}\theta_{j}f e^{\theta(x)} \,d\sigma_0 \\[3pt]
& &+ \int_{P_{\delta}} ((u_0)^{ij}_{ij}+(u_0)^{ij}_{j}\theta_{i}+
(u_0)^{ij}_{i}\theta_{j}+(u_0)^{ij}\theta_{i}\theta_{j})fe^{\theta(x)}\,dx.
\eeqs
Note that
$$ (u_0)^{ij}n_{j}\, d\sigma_0\to 0,\  \ -(u_0)^{ij}_{j}n_{i}d\sigma_0\to \, d\sigma$$
as $\delta \rightarrow 0$ \cite{D1, D2}.
Then
\beqs
\int_{\partial P_{\delta}} (u_0)^{ij}f_{i}n_{j} e^{\theta(x)}\,d\sigma_0,\
 \int_{\partial P_{\delta}} (u_0)^{ij}n_{i}\theta_{j}f
e^{\theta(x)} \,d\sigma_0 \longrightarrow 0,
\eeqs
and
\beqs
\int_{\partial P_{\delta}} (u_0)^{ij}_{j}n_{i}f e^{\theta(x)} \,d\sigma_0 \longrightarrow \int_{\partial P} fe^{\theta(x)} \,d\sigma
\eeqs
as $\delta \rightarrow 0$. In conclusion,
\beqs
\int_{P}(u_0)^{ij}f_{ij} e^{\theta(x)}\,dx
&=& \int_{\partial P} fe^{\theta(x)} \,d\sigma+ \int_{P} Bf e^{\theta(x)}\,dx.
\eeqs
Hence,
\beqs
&&\int_{P} \log\det(u_{ij})e^{\theta(x)}\,dx\\
&\leq& \int_{\partial P}ue^{\theta(x)}\,d\sigma+\int_P Bue^{\theta(x)}\,dx
+\int_{\partial P}u_0e^{\theta(x)}\,d\sigma-
\int_P Bu_0e^{\theta(x)}\,dx+\int_P\log\det ((u_0)_{ij})e^{\theta(x)}\,dx\\[3pt]
&=&\int_{\partial P}ue^{\theta(x)}\,d\sigma+\int_P Bue^{\theta(x)}\,dx+const.
\eeqs
\end{proof}

\begin{remark} 
As in \cite{ZZ2}, Lemma \ref{determinent-estimate} can be extended for any $u\in \mathcal C_\star$,
where
$$\mathcal C_\star=\{u |~ \text{$u$ is convex and satisfies $\int_{\partial P} u\, d\sigma<\infty$}\}.$$  
\end{remark}

Denote
\beq\label{hfunctioonal}
H(u)=\int_P ue^{\theta(x)}\, dx.
\eeq

\begin{proposition}\label{f-proper}
For any $0<\delta<1$, there exsits $C_\delta>0$ such that
\beq
\mathcal F(u)\geq \delta H(u)-C_\delta, \ \ \forall u\in \tilde{\mathcal C}_\infty.
\eeq
\end{proposition}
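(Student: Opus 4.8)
The plan is to deduce the stated lower bound from a single upper estimate on the entropy term $\int_P\log\det(u_{ij})e^{\theta(x)}\,dx$, the point being to extract the factor $\delta<1$ from the sublinearity of the logarithm. Concretely, I would first reduce matters to proving
\[
\int_P\log\det(u_{ij})e^{\theta(x)}\,dx\le (1-\delta)\mathcal L(u)+C_\delta,\qquad u\in\tilde{\mathcal C}_\infty .
\]
Granting this, the formula (\ref{reduced}) for $\mathcal F$ gives $\mathcal F(u)=-\int_P\log\det(u_{ij})e^{\theta(x)}\,dx+\mathcal L(u)\ge \delta\mathcal L(u)-C_\delta$, and then the elementary inequality $\mathcal L(u)\ge H(u)$ recorded in (\ref{inq2}) yields $\mathcal F(u)\ge \delta H(u)-C_\delta$, which is the assertion.

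To prove the displayed entropy estimate I would exploit the homogeneity of $\mathcal L$ and $H$ against the scaling law of the determinant. For $u\in\tilde{\mathcal C}_\infty$ not identically $0$ one has $\mathcal L(u)\ge H(u)>0$ by (\ref{inq2}), so I may put $t=\mathcal L(u)$ and $v=u/t$; then $v\in\tilde{\mathcal C}_\infty$ with $\mathcal L(v)=1$ and $H(v)\le 1$. Since $\det(u_{ij})=t^{\,n}\det(v_{ij})$,
\[
\int_P\log\det(u_{ij})e^{\theta(x)}\,dx=\int_P\log\det(v_{ij})e^{\theta(x)}\,dx+n(\log t)\int_P e^{\theta(x)}\,dx .
\]
For the first term I apply the determinant estimate (\ref{inq1}) of Lemma \ref{determinent-estimate}, bound the boundary part of $\mathcal L_B(v)$ in (\ref{bfunctional}) by the stability inequality (\ref{cond1}) of Lemma \ref{stable} and the interior part by $\|B\|_\infty H(v)$ (using that $B$ is bounded and $v\ge 0$); with $\mathcal L(v)=1$ and $H(v)\le 1$ this is bounded by a constant $C''$ independent of $u$. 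For the second term I use the elementary fact that $a\log t-(1-\delta)t$ is bounded above on $(0,\infty)$, with $a=n\int_P e^{\theta(x)}\,dx$, so that $n(\log t)\int_P e^{\theta(x)}\,dx\le (1-\delta)t+M_\delta=(1-\delta)\mathcal L(u)+M_\delta$. Adding the two bounds gives the displayed estimate with $C_\delta=C''+M_\delta$.

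The one genuinely essential step is the rescaling $u=tv$. The determinant estimate of Lemma \ref{determinent-estimate} together with Lemma \ref{stable} controls the entropy only by $(\lambda^{-1}+\|B\|_\infty)\mathcal L(u)$, and this constant need not be smaller than $1$; a direct comparison would therefore fail to leave a positive multiple of $H(u)$. Rescaling is exactly what separates the genuinely linear growth $(1-\delta)t$ of $\mathcal L$ from the merely logarithmic growth $n(\log t)\int_P e^{\theta(x)}\,dx$ of the determinant term, so that an arbitrary $\delta<1$ can be absorbed at the price of the $\delta$-dependent constant $C_\delta$. The remaining points are routine: the only element of $\tilde{\mathcal C}_\infty$ with $\mathcal L(u)=0$ is the constant $0$, which is excluded by strict convexity (and in any case satisfies the bound trivially), and all integrals involved are finite for $u\in\tilde{\mathcal C}_\infty$.
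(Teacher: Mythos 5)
Your proof is correct, and it takes a route that is related to but genuinely different from the paper's. Both arguments rest on the same two ingredients---the determinant estimate (\ref{inq1}) of Lemma \ref{determinent-estimate} and the stability inequality (\ref{cond1}) of Lemma \ref{stable}---and both need a rescaling to get around exactly the obstruction you identify: combined directly, these lemmas bound the entropy term only by $(\lambda^{-1}+\|B\|_{L^\infty})\mathcal L(u)+C$, whose coefficient need not be below $1$. The difference lies in how the rescaling is done. The paper dilates by a \emph{fixed} factor $r<1$ built from the structural constants: it first shows $\mathcal L(u)\ge \mathcal L_B(ru)+r\delta C'H(u)$ (with $C'=\|n+\sum\theta_ix_i+B\|_{L^\infty}$), absorbing the difference $|\mathcal L(u)-\mathcal L_B(u)|\le C'H(u)$ by means of (\ref{cond1}), and then applies (\ref{inq1}) to $ru$, the dilation costing only the additive constant $n\log r$; subtracting yields $\mathcal F(u)\ge r\delta C'H(u)-C_\delta$. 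You instead normalize by the $u$-dependent factor $t=\mathcal L(u)$, obtain a bound on the entropy of $v=u/t$ that is uniform over all $u$ (via (\ref{inq1}), (\ref{cond1}) and $H(v)\le\mathcal L(v)=1$, which uses (\ref{inq2})), and then absorb the leftover logarithmic term $n(\log t)\int_P e^{\theta(x)}\,dx$ into $(1-\delta)t$ by sublinearity of the logarithm. What your scheme buys is the proposition exactly as stated: the coefficient in front of $H(u)$ is an arbitrary $\delta\in(0,1)$, whereas the paper's fixed-$r$ absorption produces only the particular positive coefficient $r\delta C'$, which may be much smaller than $\delta$---enough for Theorem \ref{properness-energy}, but strictly weaker than the literal claim. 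The only caveat, common to both proofs, is that the normalization defining $\tilde{\mathcal C}_\infty$ must be read as $u\ge u(0)=0$ (as it is implicitly in the proof of Lemma \ref{stable}), so that $H(u)\ge 0$, the inequality $\mathcal L(u)\ge H(u)$ of (\ref{inq2}), and your estimate $\int_P Bve^{\theta(x)}\,dx\le\|B\|_{L^\infty}H(v)$ all hold.
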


\begin{proof}
First, we compute the difference of $\mathcal L$(u) and $\mathcal L_B(u)$
\beqs
|\mathcal L(u)-\mathcal L_B(u)|
&=& \left|\int_{P}(n+\sum\theta_ix_i+B)u e^{\theta(x)}\,dx\right| \\
&\leq & C'\int_{P} u e^{\theta(x)}\,dx\\
&\leq & (1+\delta)C_0C'\int_{\partial P} u e^{\theta(x)}\,d\sigma-\delta C'\int_{P} u e^{\theta(x)}\,dx
\eeqs
where $C'=\|n+\sum\theta_ix_i+B\|_{L^\infty}$. Note
\beqs
\int_{P}u e^{\theta(x)}\,dx \leq C_0\int_{\partial P}u e^{\theta(x)}\,d\sigma, ~\forall~ u\in \tilde{\mathcal C}_{\infty}.
\eeqs
Then by (\ref{cond1}), it follows
\beqs
|\mathcal L(u)-\mathcal L_B(u)|\leq & \frac{(1+\delta)C_0C'}{\lambda}\mathcal L(u)-\delta C'\int_{P} u e^{\theta(x)} dx.
\eeqs
Thus
$$\left(1+\frac{(1+\delta)C_0C'}{\lambda}\right)\mathcal L(u)\geq \mathcal L_B(u)+\delta C'\int_{P} u e^{\theta(x)}\,dx.$$
Now let $r=\left(1+\frac{(1+\delta)C_0C'}{\lambda}\right)^{-1}$, we get
$$\mathcal L(u)\geq \mathcal L_B(ru)+r\delta C'\int_{P} u e^{\theta(x)}\,dx.$$
Applying the inequality (\ref{inq1}) to $ru$, we obtain
$$-\int_{P} \log\det(u_{ij})e^{\theta(x)}\,dx\geq -\mathcal L_B(ru)-C+n\log r.$$
Hence,
\beqs
\mathcal F(u)
\geq r\delta C'\int_{P} u e^{\theta(x)}\,dx-C+n\log r.
\eeqs
\end{proof}

\vskip 15pt

\subsection{Properness of $\mu_{\omega_g}(\cdot)$}

In this subsection, we show that the properness of $\mathcal F(u)$ in the above subsection is equivalent to
the properness of $\mu_{\omega_g}(\phi)$.  We need a lemma as follows.

\begin {lemma}\label{h-functional}
There exists $C>0$ such that
$$|J_{\omega_g}(\tilde{\phi})- H(u_{\tilde\phi})|\leq C, ~\forall~ \phi\in \mathcal H_{G_0}(\omega_g),$$
where $\tilde\phi=\phi_\sigma$ is a normalization of $\phi$ after a transformation $\sigma\in T$
so that
$$(\psi_0+\tilde \phi)(0)=0,~ D(\psi_0+\tilde \phi)(0)=0.$$
\end {lemma}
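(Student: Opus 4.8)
The plan is to reduce $J_{\omega_g}(\tilde\phi)$ to an integral over the polytope $P$ by the Legendre transform, in the same way the modified $K$-energy was reduced in the previous proposition, and then to match the result with $H(u_{\tilde\phi})$ up to a uniformly bounded error. Writing out the definition of $J_{\omega_g}$ and using that the reference measure $e^{\theta_X(\omega_g)}\omega_g^n/n!$ is independent of the path parameter, I would split
\[
J_{\omega_g}(\tilde\phi)=\underbrace{\frac1V\int_M\tilde\phi\, e^{\theta_X(\omega_g)}\frac{\omega_g^n}{n!}}_{(\mathrm{I})}-\underbrace{\frac1V\int_0^1\!\!\int_M\dot{\tilde\phi}_s\, e^{\theta_X(\omega_{\tilde\phi_s})}\frac{\omega_{\tilde\phi_s}^n}{n!}\wedge ds}_{(\mathrm{II})}.
\]
Term $(\mathrm{II})$ has already been computed in the proof of the reduction formula: by (\ref{path-integral}) the inner path integral equals $-(2\pi)^n\int_P u_{\tilde\phi}\,e^{\theta(x)}\,dx$ up to an additive constant, so that $-(\mathrm{II})$ reproduces $H(u_{\tilde\phi})$ up to the fixed volume factor $(2\pi)^n/V$ and a constant depending only on $\varphi_0$. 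Thus everything reduces to bounding the reference term $(\mathrm{I})$ by a constant independent of $\phi$.

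This is where the normalization is used. Since $\tilde\phi=\varphi-\varphi_0$ and both $D\varphi(\xi)$ and $D\varphi_0(\xi)$ take values in the moment polytope $\overline P$, the gradient obeys $|D\tilde\phi(\xi)|\le\diam(\overline P)$ for \emph{every} $\phi\in\mathcal H_{G_0}(\omega_g)$; that is, $\tilde\phi$ is globally Lipschitz with a uniform constant. The normalization $(\varphi_0+\tilde\phi)(0)=0$ fixes $\tilde\phi(0)=-\varphi_0(0)$, whence
\[
|\tilde\phi(\xi)|\le|\varphi_0(0)|+\diam(\overline P)\,|\xi|,\qquad \xi\in\RR^n,
\]
with right-hand side independent of $\phi$. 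Pushing the reference measure forward under $x=D\varphi_0(\xi)$ turns $(\mathrm{I})$ into $\tfrac{(2\pi)^n}{V}\int_P\tilde\phi\,e^{\theta(x)}\,dx$, which is therefore dominated by $\tfrac{(2\pi)^n}{V}\int_P\big(|\varphi_0(0)|+\diam(\overline P)\,|\xi(x)|\big)e^{\theta(x)}\,dx$. Since $|\xi(x)|=O(|\log\dist(x,\partial P)|)$ near $\partial P$, this is integrable against the bounded density $e^{\theta(x)}\,dx$, and the resulting bound depends only on the fixed data $\varphi_0$ and $P$. Hence $(\mathrm{I})=O(1)$ uniformly in $\phi$, which together with the identification of $-(\mathrm{II})$ with $H(u_{\tilde\phi})$ yields the claim.

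The main obstacle is precisely this uniform control of $(\mathrm{I})$: the crucial point is that the Lipschitz bound on $\tilde\phi$ is \emph{independent} of $\phi$ because it comes only from the shared moment image $\overline P$, and that the normalization pins the value of $\tilde\phi$ at the interior point $0$; one then has to check the integrability of the logarithmically divergent factor $|\xi(x)|$ at the boundary. I would expect the bookkeeping of the fixed multiplicative constant $(2\pi)^n/V$ relating $J$ and $H$ (equivalently the choice of volume normalization) to be the only other point requiring care.
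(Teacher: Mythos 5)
Your proposal is correct, and its first half --- splitting $J_{\omega_g}$ into the reference term $(\mathrm{I})=\frac1V\int_M\tilde\phi\, e^{\theta_X(\omega_g)}\frac{\omega_g^n}{n!}$ and the path term $(\mathrm{II})$, then identifying $-(\mathrm{II})$ with $H(u_{\tilde\phi})$ through (\ref{path-integral}) --- is exactly the paper's reduction (including the suppressed factor $(2\pi)^n/V$: the paper's own first display makes the same implicit volume normalization, so your flagged caveat matches its treatment). Where you genuinely diverge is the uniform bound on $(\mathrm{I})$. You integrate the pointwise bound $|\tilde\phi(\xi)|\le|\varphi_0(0)|+{\rm diam}(\overline P)\,|\xi|$ --- obtained from the normalization at $0$ together with $D\varphi_0,\ D(\varphi_0+\tilde\phi)\in \overline P$ --- against the fixed reference measure, using that $|\xi(x)|=|Du_0(x)|$ blows up only logarithmically near $\partial P$ (Guillemin/Abreu) and is therefore integrable; this is elementary and purely toric/convex-analytic. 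The paper instead argues via the Green function of $P_g=\Delta_g+X(\cdot)$: its uniform lower bound (from \cite{CTZ}), together with $\Delta\tilde\phi\ge -n$ and the uniform bound on $X(\tilde\phi)$ (from \cite{Z1}), gives $\frac1V\int_M\tilde\phi\,e^{\theta_X(\omega_g)}\frac{\omega_g^n}{n!}\ge\sup_M\tilde\phi-C_0$, which settles the lower bound (since $\sup_M\tilde\phi\ge\tilde\phi(0)=-\psi_0(0)$) and, combined with a volume-concentration estimate for the sublevel set $\{\tilde\phi\le\sup_M\tilde\phi-N\}$ and the local upper bound for $\tilde\phi$ on $B_1(0)$ (the only place the normalization enters there), yields $\sup_M\tilde\phi\le N+C$ and hence the upper bound on the average. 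Your route buys self-containedness: no Green functions and no external inputs from \cite{CTZ} or \cite{Z1}, at the price of invoking the logarithmic asymptotics of the moment coordinates; the paper's route is the one that generalizes beyond the toric setting, since it needs only the sup-versus-average comparison and never the moment-map asymptotics. Both arguments consume the same two consequences of the normalization: the pinned value $\tilde\phi(0)=-\psi_0(0)$ and the uniform Lipschitz bound $|D\tilde\phi|\le 2\sup\{|p|:p\in P\}$.
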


\begin {proof}
By the relation $\dot{\phi_t} = - \dot{u_{t}}$,  it is easy to see
\beqs
J_{\omega_g}(\phi)=\frac{1}{V}\int_{M} \phi e^{\theta_X(g)}\frac{\omega^{n}_g}{n!} + H(u_{\phi})-H(u_{0}),~\forall ~\phi\in \mathcal H^{G_0}_{X}(\omega_g).
\eeqs
In particular,
$$J_{\omega_g}(\tilde{\phi})- H(u_{\tilde{\phi}}) =
\frac{1}{V}\int_{M}\tilde{\phi} e^{\theta_X(g)}\omega^{n}_g-H(u_{0}).$$
We claim that
$$\left|\frac{1}{V}\int_{M}\tilde{\phi} e^{\theta_X(g)}\frac{\omega^{n}_g}{n!}\right| \le C$$
for some uniform constant $C$.

Let $G(p, p')$ be the Green function of $P_{\omega_g}$ so that
$$\int_M G(p,\cdot)e^{\theta_X(g)}\frac{\omega_g^n}{n!} =0.$$
It is  proved  in \cite{CTZ} that  there  exists a  $C>0$ depending only on $g$  such that
$$G(p, p')\geq -C.$$
 Then applying the Green's formula to potential $\tilde\phi$,  we have
\beqn\label{inq3}
\tilde{\phi}(x)&=&\frac{1}{V}\int_{M}\tilde{\phi}e^{\theta_X(g)}\frac{\omega^{n}_g}{n!}-
\int_M G(x,\cdot)(\triangle\tilde\phi(\cdot)+X(\tilde\phi))e^{\theta_X(g)}\frac{\omega^{n}_g}{n!} \nonumber\\
&\leq&\frac{1}{V}\int_{M}\tilde{\phi}e^{\theta_X(g)}\frac{\omega^{n}_g}{n!}
+C_0,
\eeqn
where $C_0$ is a uniform constant.
The second inequality follows from $\triangle \tilde\phi\geq -n$ and that $X(\tilde\phi)$ is
uniformly bounded \cite{Z1}.
Thus
\beq\label{inq4}
\frac{1}{V}\int_{M}\tilde{\phi}e^{\theta_X(g)}\frac{\omega^{n}_g}{n!} \geq
\sup_M{\{\tilde{\phi}\}} - C_{0}= \sup_{\Bbb R^n}{\{\tilde{\phi}\}}
- C_{0}.
\eeq
Set
$$\Omega_N =\{  \xi\in M|~\tilde{\phi}(\xi)\leq \sup_M {\{\tilde{\phi}\}} - N\}.$$
Note that
\beqs
\frac{1}{V}\int_{M}\tilde{\phi}e^{\theta_X(g)}\frac{\omega^{n}_g}{n!}
&=& \frac{1}{V} \int_{M \cap \Omega_N}\tilde{\phi}e^{\theta_X(g)}\frac{\omega^{n}_g}{n!}+
\frac{1}{V}\int_{M \setminus \Omega_N}\tilde{\phi}e^{\theta_X(g)}\frac{\omega^{n}_g}{n!}\\
&\leq&  \frac{1}{V}[(\sup_M{\{\tilde{\phi}\}} - N) \widetilde{\rm Vol}(M\cap \Omega_N) + \sup_M {\{\tilde{\phi}\}}\widetilde{\rm Vol}(M\setminus \Omega_N)]\\
&=& \sup_M{\{\tilde{\phi}\}} - \frac{N\cdot \widetilde{\rm Vol}(M \cap \Omega_N)}{\rm Vol(M)}.
\eeqs
Here $\widetilde{\rm Vol}(\cdot)$ represents the volume with form $e^{\theta_X(g)}\frac{\omega^{n}_g}{n!}$.
Hence by (\ref{inq4}),  we derive
\beq\label{inq5}
\widetilde{\rm Vol}(M \cap \Omega_N)
\leq \frac{C_{0}{\rm Vol}(M)}{N}=\frac{C_0V}{N}\to 0,
\eeq
 as $N\to \infty$.

On the other hand, by the normalization, we have
$\tilde\phi(0)=-\psi_0(0)$. Note $D(\tilde \phi+\psi_0)\in P$. Then we have
$$|D\tilde \phi|\le 2\sup\{|p|: ~p\in P\}$$
and
$$\tilde \phi(x)\le \tilde \phi(0)+2r\sup\{|p|: ~p\in P\}\le C(r),~\forall~x\in B_r(0),$$
where $C(r)$ depends only on the radius $r$ of ball $B_r(0)$ centered at the original.
Since  the volume of domain $B_1(0)\times (0, 2\pi)^n\subset M$  associated the metric
$\omega_g=\sqrt{-1}\partial\overline\partial \psi_0$
is bigger than some number $\epsilon>0$, by (\ref{inq5}),
it is easy to see that there is at least a point $x_0 \in B_1(0)$ such that
$$\tilde\phi(x_0)\ge \sup_{M}\tilde\phi-N$$
if $N$ is sufficiently large. Hence
$$ \sup_{M}\tilde\phi\leq N+C,$$
and consequently
$$\frac{1}{V}\int_{M}\tilde{\phi}e^{\theta_X(g)}\frac{\omega^{n}_g}{n!}\le N+C.$$
By (\ref{inq4}), we also get
$$\frac{1}{V}\int_{M}\tilde{\phi}e^{\theta_X(g)}\frac{\omega^{n}_g}{n!}\geq \tilde\phi(0)-C_0=-\psi(0)-C_0.$$
Therefore the claim  is true and the lemma is proved.
 \end{proof}

\begin{theorem}\label{properness-energy}
There exist numbers $\delta>0$ and $C$ such that
 \beqn
\mu_{\omega_g}(\phi) \geq
\delta \inf_{\tau\in {T}}{I_{\omega_g}(\phi_\tau)} - C,  ~\forall~\phi\in \mathcal H_{G_0}(\omega_g).
\eeqn
 In particular, $\mu(\phi)$ is proper  for any  $\phi\in \mathcal H_{G_0}(\omega_g)$ modulo $G_0$.
\end{theorem}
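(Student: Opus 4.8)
The plan is to reduce the statement to the polytope estimate of Proposition \ref{f-proper} by exploiting the ${\rm Aut}^0(M)$-invariance of $\mu_{\omega_g}$, and then to convert the resulting bound on the symplectic side back into a bound in terms of $I_{\omega_g}$ using Lemma \ref{h-functional} and the equivalence (\ref{ij}). Fix $\phi\in\mathcal H_{G_0}(\omega_g)$. First I would choose $\sigma\in T$ so that the normalized potential $\tilde\phi=\phi_\sigma$ satisfies $(\varphi_0+\tilde\phi)(0)=0$ and $D(\varphi_0+\tilde\phi)(0)=0$, exactly as in Lemma \ref{h-functional}. Since the modified $K$-energy is invariant under ${\rm Aut}^0(M)\supset T$ \cite{TZ2}, we have $\mu_{\omega_g}(\phi)=\mu_{\omega_g}(\tilde\phi)$, so it suffices to bound $\mu_{\omega_g}(\tilde\phi)$. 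The crucial point, to be checked via the Legendre correspondence $Du_{\tilde\phi}=\xi$, is that these two normalizations place the dual potential $u_{\tilde\phi}$ in $\tilde{\mathcal C}_\infty$: indeed $\inf_P u_{\tilde\phi}$ is attained where $\xi=0$, i.e. at $x^\ast=D(\varphi_0+\tilde\phi)(0)=0$, whence $\inf_P u_{\tilde\phi}=u_{\tilde\phi}(0)$ and $u_{\tilde\phi}\in\tilde{\mathcal C}_\infty$.

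With $u_{\tilde\phi}\in\tilde{\mathcal C}_\infty$ I would then chain the available estimates. Fixing some $\delta\in(0,1)$, the reduction formula (\ref{reduced-energy}) gives $\mu_{\omega_g}(\tilde\phi)=\frac{(2\pi)^n}{V}\mathcal F(u_{\tilde\phi})+C$; Proposition \ref{f-proper} gives $\mathcal F(u_{\tilde\phi})\ge\delta H(u_{\tilde\phi})-C_\delta$; Lemma \ref{h-functional} gives $H(u_{\tilde\phi})\ge J_{\omega_g}(\tilde\phi)-C$; and (\ref{ij}) gives $J_{\omega_g}(\tilde\phi)\ge c_2 I_{\omega_g}(\tilde\phi)$. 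Concatenating these (all constants positive) produces $\mu_{\omega_g}(\phi)=\mu_{\omega_g}(\tilde\phi)\ge\delta' I_{\omega_g}(\tilde\phi)-C'$ with $\delta'=\frac{(2\pi)^n}{V}\delta c_2>0$. Finally, since $\tilde\phi=\phi_\sigma$ with $\sigma\in T$ and $\delta'>0$, we have $I_{\omega_g}(\tilde\phi)\ge\inf_{\tau\in T}I_{\omega_g}(\phi_\tau)$, which yields the asserted inequality and hence properness with the linear weight $p(t)=\delta' t-C'$.

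The main obstacle I expect is the bookkeeping at the normalization step: one must verify carefully that a single choice of $\sigma\in T$ simultaneously realizes the membership $u_{\tilde\phi}\in\tilde{\mathcal C}_\infty$ (needed for Proposition \ref{f-proper}) and the value/gradient normalization required by Lemma \ref{h-functional}, so that both results apply to the same $\tilde\phi$. A secondary technical point is that $u_{\tilde\phi}$ need not be smooth up to $\partial P$; to cover every $\phi\in\mathcal H_{G_0}(\omega_g)$ I would invoke the extension of Lemma \ref{determinent-estimate}, and hence of Proposition \ref{f-proper}, to the class $\mathcal C_\star$ noted in the remark following Lemma \ref{determinent-estimate}.
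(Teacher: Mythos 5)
Your proposal is correct and follows essentially the same route as the paper's proof: normalize by a $\sigma\in T$ so that $u_{\phi_\sigma}\in\tilde{\mathcal C}_\infty$, use $T$-invariance of $\mu_{\omega_g}$, apply the reduction formula together with Proposition \ref{f-proper}, then Lemma \ref{h-functional} and (\ref{ij}) to pass from $H(u_{\phi_\sigma})$ to $J_{\omega_g}(\phi_\sigma)$ and finally to $\inf_{\tau\in T}I_{\omega_g}(\phi_\tau)$. Your explicit verification that one choice of $\sigma$ simultaneously yields the normalization of Lemma \ref{h-functional} and membership in $\tilde{\mathcal C}_\infty$ (via $Du_{\tilde\phi}=\xi$), and your appeal to the $\mathcal C_\star$ extension for boundary regularity, are points the paper treats implicitly, but they do not change the argument.
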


\begin{proof}
Let  $\phi\in \mathcal H_{G_0}(\omega_g)$.
Then there exists a $\sigma \in T$ such that the Legendre function $u_{\phi_{\sigma}}$
associated to $\phi_{\sigma}$ is belonged to $\tilde{\mathcal {C}}_\infty$.
By Proposition \ref{f-proper}, we see that
$$\mu_{\omega_g}(\phi_{\sigma}) \geq \delta H(\phi_\sigma) -
C_{\delta}.$$
Note that $\mu_{\omega_g}(\phi)=\mu_{\omega_g}(\phi_{\sigma})$.
Thus by Lemma \ref{h-functional}, we get
\beqs
\mu_{\omega_g}(\phi)=\mu_{\omega_g}(\phi_{\sigma}) &\geq& \delta J_{\omega_g}(\phi_\sigma)
- C_{\delta}'\\[4pt]
&\geq& \delta \inf_{\tau\in {T}}J_{\omega_g}(\phi_\tau) -
C_{\delta}'\\
&\geq&\frac{\delta}{n+1} \inf_{\tau\in {T}}I_{\omega_g}(\phi_\tau) -
C_{\delta}'.
\eeqs
 Here at the last inequality we used the relation (\ref{ij}).
\end{proof}

\begin{remark}Theorem \ref{properness-energy} seems to overlap Theorem 4.5 in \cite{BB} where  the   Aubin-Ding  typed functional is studied instead of  modified $K$-energy by using the  geodesic theory for K\"ahler potentials.  \footnote{We are indebted to Chi Li  for telling us  Berman-Berndtsson's results in  \cite{BB}.}
\end{remark}

As an application of Theorem \ref{properness-energy}  together with   Lemma \ref{sufficient-condition}, we give a new proof  of  the following  Wang-Zhu Theorem \cite{WZ}.

\begin{theorem}\label{wang-zhu} There exists a K\"ahler-Ricci soliton on any toric Fano manifold.

\end{theorem}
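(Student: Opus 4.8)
The plan is to deduce the statement directly from the properness established in Theorem \ref{properness-energy} together with the existence criterion of Lemma \ref{sufficient-condition}. First I would fix the reductive group $G=T=T^n_{\mathbb C}\subset\text{Aut}^0(M)$, the complexified torus. It is reductive, and it contains $K_X$: indeed the soliton vector field $X=\sum_i\theta_i\,\partial/\partial z_i$ lies in the Lie algebra of $T$, so the compact one-parameter subgroup generated by $\text{Im}(X)$ sits inside $G_0=(S^1)^n\subset T$. Thus $G$ meets the standing hypotheses on the group in Definition \ref{proper-definition} and Lemma \ref{sufficient-condition}, and it remains only to verify the properness inequality (\ref{properdef}) for $\mu_{\omega_g}$ modulo $G$.

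Next I would record what Theorem \ref{properness-energy} already provides. For every $G_0$-invariant potential $\phi\in\mathcal H_{G_0}(\omega_g)$ one has
$$\mu_{\omega_g}(\phi)\geq \delta\,\inf_{\tau\in T} I_{\omega_g}(\phi_\tau)-C,$$
with $\delta>0$ and $C$ independent of $\phi$. Taking $p(t)=\delta t-C$, which tends to $+\infty$ as $t\to+\infty$, this is exactly the properness inequality (\ref{properdef}) of Definition \ref{proper-definition} modulo $G=T$, valid on the toric-invariant class $\mathcal H_{G_0}(\omega_g)$. So the modified $K$-energy is proper modulo $T$ once the competitors are restricted to the $G_0$-invariant potentials.

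The last step, and the only genuine point to check, is the passage from properness on the smaller invariant class $\mathcal H_{G_0}(\omega_g)$ to the hypothesis of Lemma \ref{sufficient-condition}, which a priori asks for control over all of $\mathcal H_X(\omega_g)$. Here I would invoke the reduction to the toric-invariant setting: with the reference metric $\omega_g$ chosen $G_0$-invariant, the (modified) K\"ahler-Ricci flow, or the continuity path, used to prove Lemma \ref{sufficient-condition} preserves $G_0$-invariance, so every metric produced along the way remains in $\mathcal H_{G_0}(\omega_g)$. Consequently the a priori energy bound needed to force convergence is precisely the properness supplied by Theorem \ref{properness-energy}, and Lemma \ref{sufficient-condition} then yields a ($G_0$-invariant) K\"ahler-Ricci soliton on $M$. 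The main obstacle is exactly this reduction, since the estimate of Theorem \ref{properness-energy} lives only on the toric-invariant subclass; the way around it is the symmetry-preservation property just described. An alternative route, which I would only fall back on if needed, is to symmetrize an arbitrary $K_X$-invariant competitor by averaging its K\"ahler form over the compact torus $G_0$ and to verify that $\mu_{\omega_g}$, $I_{\omega_g}$ and $J_{\omega_g}$ are not increased under this averaging, so that the bound on $\mathcal H_{G_0}$ propagates to the full class. Either route closes the gap, and the theorem follows.
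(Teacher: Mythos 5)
Your proposal is correct and follows essentially the same route as the paper, which deduces the theorem directly by combining Theorem \ref{properness-energy} with Lemma \ref{sufficient-condition}. The only difference is that you spell out the point the paper leaves implicit—that the properness holds only on $\mathcal H_{G_0}(\omega_g)$ and that the flow/continuity method preserves $G_0$-invariance so this suffices—which is exactly the right justification (your fallback averaging argument would be shakier, since $\mu_{\omega_g}$ is only convex along geodesics, not along linear averages of potentials, but you do not need it).
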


\vskip 20pt

\section{Generalization to conic metrics case}
\vskip 10pt

Singular K\"ahler-Ricci solitons on toric manifolds have been  extensivedly studied by \cite{SZ, Le, BB, DGSW}, etc.
In this section, we generalize the discussion in former sections to give a new approach by showing
the properness of modified Log $K$-energy.

Let $M$ be a toric Fano manifold and $K_M$ is the canonical line bundle. Let $\{D_i\}_{i=1}^d$  be the toric divisors corresponding to the faces of the moment polytope.
Suppose $\beta>0$ and $D=\sum_{i=1}^d (1-\beta_i) D_i\in |-(1-\beta)K_M|$
be an effective $\mathbb R$-divisor with strictly normal crossing support
and $0<\beta_i\leq 1$ for each $i$.
A conical K\"ahler metric $g$ on $M$ with angle $2\pi\beta_i$ along $D_i$ is a closed positive $(1,1)$ current in $2\pi c_1(M)$, which is  a smooth K\"ahler metric  $\omega_D$ in $M\setminus D$ and satisfies:
for any $p\in D$, there is a coordinates neighborhood $U$ with local holomorphic coordinates $(z^1, \cdot\cdot\cdot, z^n)$ of $p$ such that $D\cap U=\{z^i=0, 1\leq i\leq r\}$ and the metric is  asymptotically equivalent along the model conic metric
$$\sqrt{-1}\sum_{j=1}^r|z^j|^{2\beta_j-2}dz^j\wedge d\bar z^j
+\sqrt{-1}\sum_{j=r+1}^{n}dz^j\wedge d\bar z^j.$$
One can check that  the Guillemin metric $\omega_g=\omega_D= \sqrt{-1}\partial\overline\partial \varphi_0$  induced by symplectic potential
$$u_0=\sum_i  \beta_i^{-1}l_i\log l_i$$
  is a conical K\"ahler metric (cf.  \cite{Ab2, Le,  DGSW}), where $l_i=1-\langle \ell_i, x\rangle$. In fact, if we  let a set of  symplectic potentials
 $$\mathcal C_{\beta} =\{u=u_0+f\ | \  u\
\text{is a  strictly convex function in }\ P,\ f\in C^{\infty}(\overline{P})\},$$
then there is a one-to-one  correspondence between  $\mathcal C_{\beta} $ and
$\mathcal H_D^{G_0}(\omega_{g}),$  where  $\mathcal H^{G_0}_D(\omega_{g})$ consists all $G_0$-invariant K\"ahler potentials which are  asymptotically equilvalent to $\omega_D$.

Let $s_{i}$ be the defining section of $D_i$ and $h_i$  a Hermitian metric on $D_i$.
Denote  $\|s_{i}\|^2$  the norm of $s_{i}$.  Then $h=\otimes_{i=1}^d h_i^{1-\beta_i}$ defines a  Hermitian metric on  $D$ and gives a norm $ \|s_D\| $  for  the defining section  $s=\otimes_{i=1}^d s^{1-\beta_i}$ of $D$.
 By the Poincare-Lelong identity,
$$\sqrt{-1}\partial\bar\partial\log \|s_{i}\|^2=-c_1([D_i], h_i)+\{D_i\},$$
where $\{D_i\}$ denotes the current of integration along $D_i$, we have
\beq\label{section-metric}
\sqrt{-1}\partial\bar\partial \log \|s_D\|^2=-c_1([D], h)+\{D\}.
\eeq
(\ref{section-metric}) implies that there exists $\tau\in\mathbb R^n$ such that
\beqn\label{tau-condition}\log \|s_D\|^2=-(1-\beta)\varphi_0-\beta\langle\tau, \xi\rangle+const.
\eeqn

A conical K\"ahler metric $\omega$ with $2\pi \beta_i$ angle  along each  $D_i$   is called a {\it conical K\"ahler-Ricci soliton} if there  is a holomorphic vector field $X$ on $M$ for some $\beta\in (0,1]$  such that
\beqn\label{conical-kr-soliton}
{\rm Ric}(\omega)-\beta\omega-\{D\}=L_X\omega.
\eeqn
We will investigate   a solution   of   (\ref {conical-kr-soliton})  in  $\mathcal H^{G_0}_D(\omega_{g})$.  Let  $X=\sum \theta_\alpha \zeta_\alpha$, where $\zeta_\alpha$ is a basis of Lie algebra
  $\eta_{T}$ of $T$.  Then a  lemma in \cite{ DGSW} shows that
$\tau=(\tau_1,...,\tau_n)$  in (\ref{tau-condition}) is uniquely determined by relation,
\beq\label{vanishing-condition}
\tau_\alpha=\frac{\int_P x_\alpha e^{\theta(x)}dx}{\int_Pe^{\theta(x)}dx}, \ \ \alpha=1, ..., n.
\eeq
Moreover, $\beta_i=1-\beta l_i(\tau), $ $i=1,...,d$.

Let  $\mathcal H_{X, D}(\omega_{g})$  be a  class of $K_X$-invariant functions  $\phi\in C^{2,\alpha}(M)$  such that  $\omega_g+ \sqrt{-1}\partial\overline\partial \phi$ are conical metrics  with $2\pi \beta_i$ angle  along each  $D_i$.   Following \cite{LS} (also see \cite {Be1, Li}), we consider the following modified  Log  $K$-energy functional
on $\mathcal H_{X, D}(\omega_{g})$,
\beqn\label{logkenergy}
\mu_{\omega_{g}, D}(\phi)  &=&\mu_{\omega_g}(\phi)+(1-\beta)(I_{\omega_g}(\phi)-J_{\omega_g}(\phi))
+\int_M \log\|s_D\|^2e^{\theta_X(\omega_\phi)}\frac{\omega^{n}_{\phi}}{n!}.
\eeqn
It is easy to check that a soultion of   (\ref {conical-kr-soliton})    is a critical point of  $\mu_{\omega_{g}, D}(\phi)$.    We need to study the
properness of  $\mu_{\omega_{g}, D}(\phi)$.

 Define
\beq\label{modified-L}
\mathcal L_{\beta, \tau}(u)=\beta \left(\mathcal L(u)-\int_{P} \langle\tau, \nabla u\rangle e^{\theta(x)} \, dx\right), ~\forall ~u\in\mathcal C_\infty.
\eeq
 By (\ref{vanishing-condition}), it is clear
\beq\label{futaki-conical}
\mathcal L_{\beta, \tau}(u)=0, ~\forall ~u=\sum  a_\alpha x_\alpha, ~(a_1,...,a_n)\in \mathbb R^n.
\eeq

\begin{lemma}\label{equal-two-energy}
Let  $\phi\in \mathcal H^{G_0}_{D}(\omega_g)$ and  $u$
be the Legendre  dual function of $\varphi_0+\phi$.
Then
\beqn\label{reduced-energy-conic}\mu_{\omega_g, D}(\phi)=\frac{(2\pi)^{n}}{V}
\mathcal F_{\beta,\tau}(u)+C,
\eeqn
where
\beq\label{reduced-conic}
\mathcal F_{\beta, \tau}(u) =
- \int_{P} \log\det(u_{ij})e^{\theta(x)}\,dx + \mathcal L_{\beta, \tau}(u),
\eeq
and $C$ is a constant.
\end{lemma}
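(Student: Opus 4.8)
The plan is to build on the reduction of the smooth modified $K$-energy already established in (\ref{reduced-energy}), namely $\mu_{\omega_g}(\phi)=\frac{(2\pi)^n}{V}\mathcal F(u)+C$, and to reduce only the two extra terms appearing in the definition (\ref{logkenergy}) of $\mu_{\omega_g,D}$: the correction $(1-\beta)(I_{\omega_g}(\phi)-J_{\omega_g}(\phi))$ and the singular term $\int_M\log\|s_D\|^2 e^{\theta_X(\omega_\phi)}\frac{\omega_\phi^n}{n!}$. Since by definition (\ref{modified-L}) one has $\mathcal L_{\beta,\tau}(u)-\mathcal L(u)=(\beta-1)\mathcal L(u)-\beta\int_P\langle\tau,\nabla u\rangle e^{\theta(x)}\,dx$, it suffices to show that the sum of these two extra terms equals $\frac{(2\pi)^n}{V}[\mathcal L_{\beta,\tau}(u)-\mathcal L(u)]$ up to an additive constant, after which (\ref{reduced-energy-conic}) follows by adding (\ref{reduced-energy}).

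First I would reduce the singular term. Substituting the identity (\ref{tau-condition}), $\log\|s_D\|^2=-(1-\beta)\varphi_0-\beta\langle\tau,\xi\rangle+\mathrm{const}$, splits the integral into a $\varphi_0$-part and a $\langle\tau,\xi\rangle$-part. Passing to symplectic coordinates, where $e^{\theta_X(\omega_\phi)}\frac{\omega_\phi^n}{n!}$ reduces to $(2\pi)^n e^{\theta(x)}\,dx$ and $\xi=\nabla u(x)$, the $\langle\tau,\xi\rangle$-part becomes, with the appropriate $\frac{(2\pi)^n}{V}$ normalization, precisely $-\beta\frac{(2\pi)^n}{V}\int_P\langle\tau,\nabla u\rangle e^{\theta(x)}\,dx$, which is exactly the $\tau$-contribution in $\mathcal L_{\beta,\tau}$. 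The remaining $\varphi_0$-part, proportional to $-(1-\beta)\int_M\varphi_0\, e^{\theta_X(\omega_\phi)}\frac{\omega_\phi^n}{n!}$, is carried forward to be combined with the $(I-J)$ term.

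The main work is combining $(1-\beta)(I-J)$ with this $\varphi_0$-part. Using the reduction of $J$ from Lemma \ref{h-functional} together with the definition of $I$, one finds $I-J=-\frac1V\int_M\phi\,e^{\theta_X(\omega_\phi)}\frac{\omega_\phi^n}{n!}-\frac{(2\pi)^n}{V}(H(u_\phi)-H(u_0))$. The key point is that $(I-J)$ and the $\varphi_0$-part share the common factor $(1-\beta)$, and that the $\varphi_0$-integrand combines with the $\phi$-integrand through $\phi+\varphi_0=\varphi$; since $\varphi(\xi(x))=\langle x,\nabla u\rangle-u$, this produces $(2\pi)^n\int_P(\langle x,\nabla u\rangle-u)e^{\theta(x)}\,dx$ and, crucially, removes the otherwise awkward composition $\varphi_0(\nabla u(x))$. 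Adding back the $H(u_\phi)=\int_P u\, e^{\theta(x)}\,dx$ contribution cancels the $u$-terms, leaving $-\frac{(2\pi)^n}{V}\int_P\langle x,\nabla u\rangle e^{\theta(x)}\,dx$ up to a constant. Recognizing from (\ref{inq2}) that $\mathcal L(u)=\int_P\langle x,\nabla u\rangle e^{\theta(x)}\,dx$, the combined $\varphi_0$-plus-$(I-J)$ contribution equals $(\beta-1)\frac{(2\pi)^n}{V}\mathcal L(u)$; assembling this with the $\tau$-contribution from the previous step yields $\frac{(2\pi)^n}{V}[\mathcal L_{\beta,\tau}(u)-\mathcal L(u)]$, as required.

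The step I expect to require the most care is exactly this cancellation: reducing $I$ and $J$ separately all the way would leave the non-Legendre term $\varphi_0(\nabla u(x))$, which has no clean polytope expression, so the correct bookkeeping is to keep $\phi$ and $\varphi_0$ together and use $\phi+\varphi_0=\varphi$ \emph{before} passing to symplectic coordinates. One must also track the normalization constants and the factors $\frac{(2\pi)^n}{V}$ consistently across the three terms, and verify that every discarded piece is genuinely $u$-independent, in particular the constant in (\ref{tau-condition}) and the quantity $H(u_0)$, so that all of them may be absorbed into the final constant $C$.
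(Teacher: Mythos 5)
Your proposal is correct and follows essentially the same route as the paper: the paper likewise keeps the smooth reduction (\ref{reduced-energy}) intact, transforms only the two extra terms of (\ref{logkenergy}) into symplectic coordinates using (\ref{tau-condition}), merges $\phi$ with $\varphi_0$ into the full potential $\varphi$ so that the Legendre identity $\varphi=\langle x,\nabla u\rangle-u$ applies, absorbs the path-integral ($H$-functional) term, and identifies $\int_P\langle x,\nabla u\rangle e^{\theta(x)}\,dx=\mathcal L(u)$ to land on $-(1-\beta)\mathcal L(u)-\beta\int_P\langle\tau,\nabla u\rangle e^{\theta(x)}\,dx$. Your bookkeeping is in fact more careful than the paper's (whose intermediate display has a sign typo on the $\tau$-term that your computation silently corrects), so there is nothing to fix.
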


\begin{proof}
It suffices to transform the latter two terms in (\ref{logkenergy}) under symplectic potentials.
Note that
\beqs
I_{\omega_g}(\phi)-J_{\omega_g}(\phi)
=-\frac{1}{V}\int_{M}\phi e^{\theta_{X}(\omega_\phi)} \frac{\omega_{\phi}^n}{n!}+\frac{1}{V}\int_{0}^{1}\int_{M}
\dot\phi_{s}e^{\theta_{X}(\omega_{\phi_{s}})}\frac{\omega_{\phi_{s}}^n}{n!}ds
\eeqs
Hence by similar expression as in Propositon \ref{reduced-energy} we have
\beqs
&&(1-\beta)(I_{\omega_g}(\phi)-J_{\omega_g}(\phi))
+\int_M \log\|s_D\|^2e^{\theta_X(\omega_\phi)}\frac{\omega^{n}_{\phi}}{n!}\\
&=&\frac{(2\pi)^n}{V}\left[-(1-\beta)\int_{\mathbb R^n}\varphi e^{X(\varphi)} \det D^2\varphi \, d\xi
+\beta\int_{\mathbb R^n}\langle\tau, \xi\rangle e^{X(\varphi)} \det D^2\varphi \, d\xi \right]\\
&&+\frac{(2\pi)^n}{V}(1-\beta)\int_{0}^{1}\int_{M}
\dot\phi_{s}e^{\theta_{X}(\omega_{\phi_{s}})}\frac{\omega_{\phi_{s}}^n}{n!}ds\\
&=&\frac{(2\pi)^n}{V}\left[-(1-\beta)\mathcal L(u)-\beta\int_P\langle \tau, \nabla u \rangle e^{\theta(x)}\,dx\right]
\eeqs
Combing with (\ref{reduced}), we obtain (\ref{reduced-conic}).
\end{proof}

To prove the properness of $\mathcal F_{\beta, \tau}(u)$, by (\ref{futaki-conical}),
it suffices to consider the  function space $\tilde{\mathcal C}_{\infty, \tau}$
which contains all the functions in $\mathcal C_{\infty}$ normalized at $\tau$.

\begin{proposition}\label{properness-conical}
If $\tau\in P$, then for any $0<\delta<1$, there exists $C_{\delta,\beta}>0$ such that
$$ \mathcal F_{\beta,\tau}(u)\geq \delta \int_P ue^{\theta(x)}\, dx-C_{\delta,\beta},
\ \ \forall u\in \tilde{\mathcal C}_{\infty, \tau}.$$
\end{proposition}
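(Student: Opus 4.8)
The plan is to reproduce the scheme of Proposition \ref{f-proper}, replacing the linear functional $\mathcal L$ by $\mathcal L_{\beta,\tau}$ and the normalization class $\tilde{\mathcal C}_\infty$ by $\tilde{\mathcal C}_{\infty,\tau}$ (the reduction to this class being legitimate by (\ref{futaki-conical}) and invariance under adding constants). Three ingredients are needed: a coercivity estimate $\mathcal L_{\beta,\tau}(u)\ge\lambda_\beta\int_{\partial P}ue^{\theta(x)}\,d\sigma$ on $\tilde{\mathcal C}_{\infty,\tau}$ playing the role of Lemma \ref{stable}; the determinant estimate of Lemma \ref{determinent-estimate}, which is purely an inequality for smooth convex $u$ and so applies verbatim with its $\mathcal L_B$; and the Poincar\'e-type bound $\int_Pue^{\theta(x)}\,dx\le C_0\int_{\partial P}ue^{\theta(x)}\,d\sigma$ on $\tilde{\mathcal C}_{\infty,\tau}$, which holds by the same argument as in the Fano case. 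Granting these, the conclusion follows by the scaling argument of Proposition \ref{f-proper}.

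First I would establish the coercivity estimate, which I expect to be the crux. Using (\ref{inq2}) one has $\mathcal L(u)=\int_P\langle x,\nabla u\rangle e^{\theta(x)}\,dx$, so (\ref{modified-L}) gives the clean formula $\mathcal L_{\beta,\tau}(u)=\beta\int_P\langle x-\tau,\nabla u\rangle e^{\theta(x)}\,dx$. For $u\in\tilde{\mathcal C}_{\infty,\tau}$ normalized so that $u(\tau)=\inf_Pu=0$ and $\nabla u(\tau)=0$, convexity yields $\langle x-\tau,\nabla u(x)\rangle\ge u(x)-u(\tau)=u(x)\ge 0$, whence $\mathcal L_{\beta,\tau}(u)\ge\beta\int_Pue^{\theta(x)}\,dx\ge 0$. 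To upgrade this I would argue by contradiction as in Lemma \ref{stable}: a normalized sequence $u_k$ with $\int_{\partial P}u_ke^{\theta(x)}\,d\sigma=1$ and $\mathcal L_{\beta,\tau}(u_k)\to 0$ converges locally uniformly to a convex $u_\infty\ge 0$ with $\int_Pu_\infty e^{\theta(x)}\,dx=0$, so $u_\infty\equiv 0$. Integrating by parts,
\[
\tfrac1\beta\mathcal L_{\beta,\tau}(u_k)=\int_{\partial P}u_k\langle x-\tau,\vec n\rangle e^{\theta(x)}\,d\sigma_0-\int_P\bigl(n+\theta(x-\tau)\bigr)u_ke^{\theta(x)}\,dx ,
\]
where the interior integral tends to $\int_P(n+\theta(x-\tau))u_\infty e^{\theta(x)}\,dx=0$. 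On the face $F_i$ one has $\langle x-\tau,\vec n\rangle\,d\sigma_0=l_i(\tau)\,d\sigma$, so the boundary term is $\sum_i l_i(\tau)\int_{F_i}u_ke^{\theta(x)}\,d\sigma$. Since $\tau\in P$ forces $\langle\tau,\ell_i\rangle<1$ for every $i$, setting $a=\max_i\langle\tau,\ell_i\rangle<1$ gives
\[
\sum_i l_i(\tau)\int_{F_i}u_ke^{\theta(x)}\,d\sigma=1-\sum_i\langle\tau,\ell_i\rangle\int_{F_i}u_ke^{\theta(x)}\,d\sigma\ge 1-a ,
\]
so $\liminf_k\mathcal L_{\beta,\tau}(u_k)\ge\beta(1-a)>0$, a contradiction. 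This is exactly where the hypothesis $\tau\in P$ enters.

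With coercivity in hand I would run the comparison-and-scaling step. Unlike the Fano case, $\mathcal L_{\beta,\tau}(u)-\mathcal L_B(u)$ is not a pure interior integral: it carries a boundary discrepancy $\sum_i(\beta l_i(\tau)-1)\int_{F_i}ue^{\theta(x)}\,d\sigma$ alongside an interior term bounded by $C'\int_Pue^{\theta(x)}\,dx$. However, the discrepancy is dominated by $C_1\int_{\partial P}ue^{\theta(x)}\,d\sigma$ with $C_1=\max_i|\beta l_i(\tau)-1|$ bounded, and the coercivity estimate lets me absorb both it and the interior term into a multiple of $\mathcal L_{\beta,\tau}(u)$, exactly as the large constant $C'$ is absorbed in Proposition \ref{f-proper}. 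The outcome is an inequality $\mathcal L_{\beta,\tau}(u)\ge\mathcal L_B(ru)+r\delta C'H(u)$ for a suitable $r=r(\delta,\beta)\in(0,1)$, with $r\delta C'$ bounded below by a positive constant; applying Lemma \ref{determinent-estimate} to $ru$ cancels the $\mathcal L_B(ru)$ terms and leaves $\mathcal F_{\beta,\tau}(u)\ge r\delta C'H(u)-C_{\delta,\beta}$, which is the assertion.

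The hard part will be the coercivity estimate of the first step, and within it the verification that the boundary contribution stays bounded away from $1$; this is precisely the point where $\tau\in P$ (equivalently $l_i(\tau)>0$ for every face) is indispensable, mirroring how the soliton property makes $\mathcal L_{\beta,\tau}$ vanish on linear functions in (\ref{futaki-conical}). The only genuine novelty relative to Proposition \ref{f-proper} is the extra boundary discrepancy in the comparison, but since $\tau\in P$ keeps all weights $l_i(\tau)$ positive and bounded, this discrepancy is comparable to $\int_{\partial P}ue^{\theta(x)}\,d\sigma$ and is swallowed by the same coercivity, so no essentially new difficulty arises there.
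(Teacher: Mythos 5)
Your proposal is correct and takes essentially the same approach as the paper: the paper's proof consists precisely of the key normalization inequality $(x-\tau)\cdot\nabla u-u\geq 0$ for $u\in\tilde{\mathcal C}_{\infty,\tau}$, which replaces (\ref{inq2}) by $\mathcal L_{\beta,\tau}(u)\geq\beta\int_P ue^{\theta(x)}\,dx$, followed by the instruction to repeat the arguments of Section 3.2 line by line. Your write-up simply carries out that repetition explicitly, correctly identifying the two points where the conical case differs — the boundary weights $l_i(\tau)>0$ in the coercivity step (where $\tau\in P$ is used) and the boundary discrepancy in the comparison with $\mathcal L_B$ — both of which are handled exactly as the paper intends.
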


\begin{proof}
The proof is similar to Propositon \ref{f-proper}.
Note that by the definition of Legendre transform,  we have $(x-\tau)\cdot \nabla u-u\geq 0$ for $u\in \tilde{\mathcal C}_{\infty, \tau}$.
\footnote{For any $x_0\in P$, let $l_{x_0}(x)$ be the tangent function at $x_0$,
then $l_{x_0}(x)=\nabla u(x_0)(x-x_0)+u(x_0)$. Sine $u$ is normalized at $\tau$,
we always have $l_{x_0}(\tau)\leq 0$. It is easy to see that $\nabla u(x_0)(x_0-\tau)-u(x_0)=- l_{x_0}(\tau)\geq 0$.}
Then one can replace (\ref{inq2}) in Lemma \ref{stable} by
$$\mathcal L_{\beta,\tau}(x)=\beta\left(\int_P[(x-\tau)\cdot \nabla u-u]e^{\theta(x)}\, dx
+\int_P u e^{\theta(x)}\, dx\right)\geq \beta\int_P u e^{\theta(x)}\, dx$$ and check the arguments line by line
in Section 3.2.
\end{proof}

   Applying Lemma \ref{equal-two-energy} and Proposition  \ref{properness-conical},  we can give a new proof of following  Datar-Guo-Song-Wang Theorem  \cite{DGSW}.\footnote{ The statement here is a bit  different to  one in  \cite{DGSW}.}

\begin{theorem}\label{DGSW} Let $ X=\sum_{i=1}^n \theta_i\zeta_i\in\eta_T$.   Let $\overline\beta=\sup\{\beta|~ \beta l_i(\tau)<1, ~i=1,...,d\}$.  Suppose that $\tau\in P$. Then for any  $\beta\le \overline\beta$
  there exists  a unique toric invariant conical K\"ahler-Ricci soliton $\omega$  which solves (\ref{conical-kr-soliton})
 with
$D=\sum (1-\beta_i)D_i$  and $\beta_i=\beta l_i(\tau).$
Moreover, the conical angles of $\omega$  are $2\pi\beta_i$  along $D_i$.
\end{theorem}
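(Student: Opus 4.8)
The plan is to reduce the analytic PDE problem (\ref{conical-kr-soliton}) to a variational problem for the reduced functional $\mathcal F_{\beta,\tau}(u)$ on the space of symplectic potentials, exactly as the smooth case was reduced in Section 3. First I would invoke Lemma \ref{equal-two-energy}, which identifies the modified Log $K$-energy $\mu_{\omega_g,D}(\phi)$ with $\frac{(2\pi)^n}{V}\mathcal F_{\beta,\tau}(u)+C$ for $\phi\in\mathcal H^{G_0}_D(\omega_g)$, where $u$ is the Legendre dual of $\varphi_0+\phi$. Since a solution of (\ref{conical-kr-soliton}) is a critical point of $\mu_{\omega_g,D}$, it suffices to produce a minimizer of $\mathcal F_{\beta,\tau}$ over the appropriate convex class and then show it is smooth enough and strictly convex so as to yield a genuine conical metric. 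The normalization (\ref{vanishing-condition}) and the vanishing (\ref{futaki-conical}) guarantee that the linear modes are killed, so one works on $\tilde{\mathcal C}_{\infty,\tau}$, the functions normalized at $\tau$.

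The key step for existence is Proposition \ref{properness-conical}: under the hypothesis $\tau\in P$, it gives for each $0<\delta<1$ a constant $C_{\delta,\beta}$ with
\[
\mathcal F_{\beta,\tau}(u)\geq \delta\int_P u e^{\theta(x)}\,dx - C_{\delta,\beta},\quad \forall\, u\in\tilde{\mathcal C}_{\infty,\tau}.
\]
This properness estimate is precisely the coercivity one needs to run the direct method: I would take a minimizing sequence $\{u_k\}$, use the lower bound together with $\int_{\partial P}u\,e^{\theta(x)}d\sigma$-control (as in Lemma \ref{stable} and Lemma \ref{h-functional}) to extract a locally uniformly convergent subsequence with convex limit $u_\infty$, and then argue that $u_\infty$ is the desired minimizer. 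The properness also rules out escape to infinity, so the infimum is attained. For the translation $\tau\in P$ hypothesis and the identification $\beta_i=\beta l_i(\tau)$, I would quote the DGSW lemma referenced just before (\ref{vanishing-condition}), which pins down $\tau$ and $\beta_i$ from the soliton data; the range $\beta\le\overline\beta$ is exactly the condition keeping all $\beta_i=\beta l_i(\tau)<1$, i.e. keeping the cone angles admissible.

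For uniqueness I would rely on the strict convexity structure of the functional. The term $-\int_P\log\det(u_{ij})e^{\theta(x)}\,dx$ is strictly convex along affine segments in $u$ (by convexity of $-\log\det$ on positive-definite matrices), while $\mathcal L_{\beta,\tau}(u)$ is linear in $u$; hence $\mathcal F_{\beta,\tau}$ is strictly convex modulo the linear (translation) directions that were already normalized away. Strict convexity forces the minimizer to be unique in $\tilde{\mathcal C}_{\infty,\tau}$, which translates back to uniqueness of the conical soliton in $\mathcal H^{G_0}_{X,D}(\omega_g)$. Finally, I would verify that the Euler–Lagrange equation for the minimizer is equivalent to (\ref{conical-kr-soliton}) and that the minimizer produces a metric with the prescribed cone angles $2\pi\beta_i$ along $D_i$, using the Guillemin-type boundary behavior of $u_0=\sum_i\beta_i^{-1}l_i\log l_i$.

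The main obstacle I expect is the regularity and boundary-behavior analysis: showing that the minimizer $u_\infty$ obtained by the direct method is smooth in the interior and has the correct asymptotic behavior at $\partial P$ so that the corresponding $\omega$ is genuinely a conical Kähler metric with angle exactly $2\pi\beta_i$, rather than merely a weak or degenerate solution. This step is where the convex-analytic estimates must be upgraded to $C^{2,\alpha}$ control matching the conical model, and it is the part least amenable to the soft variational argument; the properness inequality supplies compactness but not regularity, so one must separately control $\det(u_{ij})$ away from $0$ and $\infty$ near the faces, leaning on the explicit Guillemin potential and the interior estimates for the real Monge–Ampère equation in the toric setting.
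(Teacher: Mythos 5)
Your reduction to $\mathcal F_{\beta,\tau}$ via Lemma \ref{equal-two-energy}, your use of Proposition \ref{properness-conical} as the coercivity input, and your uniqueness argument from convexity of $\mathcal F_{\beta,\tau}$ all match the paper's ingredients; the uniqueness half is exactly what the paper does. But for existence you take a genuinely different route, and it has a gap at its decisive step. The paper does not minimize $\mathcal F_{\beta,\tau}$ by the direct method: it runs the continuity method on the equation (\ref{conical-kr-soliton}) itself, so that every potential $\phi_t$ along the path is already a conical solution of an elliptic equation, and the only thing left to prove is an a priori $C^0$ estimate. That estimate is where properness enters: since $\mu_{\omega_g,D}$ is monotone along the path (as in \cite{TZ1, CTZ}), properness forces $I_{\omega_g}(\phi_t)$ to stay uniformly bounded, which gives the upper bound on $\phi_t$; the lower bound comes from a uniform lower bound on Green functions as in \cite{Ma, CTZ}, or alternatively from H\"older estimates for the Legendre duals $u_t$ as in \cite{D3, SZ, DGSW}. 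Higher-order estimates then follow from the $C^0$ bound by the established theory (\cite{TZ1, JMR, DGSW}), and existence follows.

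Your direct-method alternative stalls exactly where you flag it, and that obstacle is not a technicality one can defer: a minimizing sequence in $\tilde{\mathcal C}_{\infty,\tau}$ only yields a convex function $u_\infty$ as a locally uniform limit, for which $-\int_P\log\det((u_\infty)_{ij})e^{\theta(x)}\,dx$ need not even be classically defined, let alone finite, and the proposal contains no mechanism to show $u_\infty$ is smooth, strictly convex, and has the precise Guillemin asymptotics $u_0=\sum_i\beta_i^{-1}l_i\log l_i$ at $\partial P$ that encode the cone angles $2\pi\beta_i$. Regularity of minimizers for such Abreu-type functionals is a genuinely hard problem (compare \cite{ZZ2}, where the variational method only produces \emph{weak} minimizing solutions), and this is precisely why \cite{WZ, DGSW} and the present paper all argue through a priori estimates along a continuity path rather than through regularity of a variational minimizer. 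So as written your proposal establishes uniqueness but leaves the existence half incomplete; to close it along the paper's lines, replace the direct minimization by the continuity method and use Proposition \ref{properness-conical}, fed through Lemma \ref{equal-two-energy} and the monotonicity of $\mu_{\omega_g,D}$, solely as the source of the $C^0$ estimate.
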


 \begin{proof}  Since  higher order estimates depend on  $C^0$-estimate  for a family of K\"ahler potentials $\phi_t$ as in \cite{TZ1,  JMR} (also see \cite{DGSW}  for toric manifolds),  it  suffices to get the $C^0$-estimate
when we use   the continuity method to  solve (\ref{conical-kr-soliton}).   Since  $\mu_{\omega_g, D}(\phi)$ is monotonic  for $\phi_t$ as in the smooth  metrics case \cite{TZ1, CTZ},  the properness of  $\mu_{\omega_g, D}(\phi)$ implies that $I_{\omega_g}(\phi_t)$ is uniformly bonuded.
As a consequence, we get an upper bound of $\phi_t$.   The lower bound  can be also obtained by establishing a uniform lower bonud for the Green functions of $\omega_{\phi_t}$ as in \cite{Ma, CTZ} for smooth metrics. There is another way to get a
H\"older estimate for Legendre functions $u_t$ of $\phi_t$ by using an observation in  \cite{D3} for toric invariant metrics, if one knows the upper bound of $u_t$, which is equal to one of $\phi_t$.  Then $u_t$ is uniformly bounded,  and so  is $\phi_t$ ( also see \cite {SZ}).  In fact, the latter argument was   presented   by  Datar-Guo-Song-Wang   \cite{DGSW}  while  they got an upper bound of $u_t$ by studying  a class of  real Monge- Amp\'ere  equations  as done in \cite{WZ}.
The uniqueness of $\omega$  follows from the convexity of  $\mathcal F_{\beta, \tau}(u)$.

\end{proof}

In case that $ X$ is chosen as a soliton vector field on $M$, $\tau=0$ by (\ref{vanishing-condition}). Then in  Theorem \ref{DGSW},  
 $\beta_i=\beta\le1,~i=1,...,d, $ and $D=\sum (1-\beta)D_i$.  Thus Theorem \ref{conical-theorem} is a corollary of  Theorem \ref{DGSW}.

\newpage

\end{document}